\newcommand{\R}{\mathbb{R}} %real numbers
\newcommand{\Q}{\mathbb{Q}} %rational numbers
\newcommand{\C}{\mathbb{C}} % complex numberse
\newcommand{\Z}{\mathbb{Z}} %integers
\newcommand{\h}{\mathbb{H}} % upper half plane
\newcommand*{\reg}{\mathrm{reg}}
\newcommand{\bs}{\backslash}
\newcommand{\mt}{\mathbf{t}}
\DeclareMathOperator{\Iso}{Iso}
\newcommand{\IM}{I^{\mathrm{M}}}
\newcommand{\ISh}{I^{\mathrm{Sh}}}
\newcommand{\LambdaM}{\Lambda^{\mathrm{M}}}
\newcommand{\SL}{{\text {\rm SL}}}
\newcommand{\G}{\Gamma}
\newcommand{\dg}{\mathcal{D}} %discriminant group
\newcommand{\dgdelta}{{\mathcal{D}(\Delta)}}
\newcommand{\abs}[1]{\left\vert#1\right\vert}
\newcommand{\e}{\mathfrak{e}}
\newcommand{\Deltaover}[1]{\left(\frac{\Delta}{#1}\right)}
\newcommand{\smallabcd}{\left(\begin{smallmatrix}a & b \\ c & d\end{smallmatrix}\right)}
\newcommand{\smallTmatrix}{\left(\begin{smallmatrix}1 & 1 \\ 0 & 1\end{smallmatrix}\right)}
\newcommand{\smallSmatrix}{\left(\begin{smallmatrix}0 & -1 \\ 1 & 0\end{smallmatrix}\right)}
\DeclareMathOperator{\PSL}{PSL}
\DeclareMathOperator{\Mp}{Mp}
\DeclareMathOperator{\sgn}{sgn}
\newtheorem{theorem}{Theorem}[section]
\newtheorem{proposition}[theorem]{Proposition}
\newtheorem{lemma}[theorem]{Lemma}
\newtheorem{corollary}[theorem]{Corollary}
\theoremstyle{definition}
\newtheorem{example}[theorem]{Example}
\newtheorem{remark}[theorem]{Remark}
\numberwithin{theorem}{section} \numberwithin{equation}{section}
\DeclareMathOperator{\erfc}{erfc}
\begin{document}

\title[]{On a theta lift related to the Shintani lift}

\author{Claudia Alfes-Neumann and Markus Schwagenscheidt}
\maketitle

\begin{abstract} We study a certain theta lift which maps weight $-2k$ to weight $1/2-k$ harmonic weak Maass forms for $k \in \Z, k \geq 0$, and which is closely related to the classical Shintani lift from weight $2k+2$ to weight $k+3/2$ cusp forms. We compute the Fourier expansion of the theta lift and show that it involves twisted traces of CM values and geodesic cycle integrals of the input function. As an application, we obtain a criterion for the non-vanishing of the central $L$-value of an integral weight newform $G$ in terms of the holomorphicity of the theta lift of a certain harmonic weak Maass form associated to $G$. Moreover, we derive interesting identities between cycle integrals of different kinds of modular forms.
\end{abstract}

\section{Introduction}

%It is well known that the values of the modular $j$-invariant at quadratic irrationalities in the upper half-plane, called singular moduli, are algebraic integers. 
A famous result of Zagier \cite{zatr} states that the twisted traces of singular moduli, i.e. the values of the modular $j$-invariant at quadratic irrationalities in the upper half-plane, occur as the Fourier coefficients of weakly holomorphic modular forms of weight $1/2$ and $3/2$. Bruinier and Funke \cite{brfu06} showed that the generating series of the traces of singular moduli can be obtained as the image of a certain theta lift of $J = j - 744$. Using this approach, new proofs of Zagier's results, including the modularity of generating series of twisted traces of singular moduli, and generalizations to higher weight and level have been studied in several recent works, e.g. \cite{ae}, \cite{brono2}, \cite{alfes}, \cite{AGOR}. 
%It is well known that the values of the modular $j$-invariant at quadratic irrationalities, called singular moduli, are algebraic integers. A classical result of Zagier \cite{zatr} states that the twisted traces of singular moduli occur as the Fourier coefficients of weakly holomorphic modular forms of weight $1/2$ and $3/2$. Bruinier and Funke \cite{brfu06} showed that parts of Zagier's result on the traces of the $j$-function can be obtained as a special case of a theta lift using a kernel function constructed by Kudla and Millson \cite{km86}. 
%The lift of Bruinier and Funke maps a harmonic weak Maass form of weight $0$ on a modular curve of arbitrary genus to a form of weight $3/2$. In \cite{ae} a twisted version of Bruinier's and Funke's theta lift was considered. Moreover, a generalization of this lift for other weights was studied in \cite{brono2} and \cite{alfes}. 
For example, in \cite{AGOR} a twisted theta lift from weight $0$ to weight $1/2$ harmonic weak Maass forms was defined which allowed to recover Zagier's generating series of weight $1/2$ as a theta lift. Further, it turned out that this lift is closely related to the Shintani lift via the $\xi$-operator on harmonic weak Maass forms. The classical Shintani lift establishes a connection between integral and half-integral modular forms \cite{shintani} and is an indispensable tool in the theory of modular forms. 
%The coefficients of the image of this lift are given by cycle integrals of the integral weight input form.
 Using this relationship between integral and half-integral weight modular forms a number of remarkable theorems were proven, for example the famous theorem of Waldspurger \cite{Waldspurger}, which asserts that the central critical value of the twisted $L$-function of an even weight newform is proportional to the square of a coefficient of a half-integral weight modular form.

In \cite{AGOR}, the connection between the two lifts led to a more explicit version of a theorem of Bruinier and Ono \cite{brono} which states that the vanishing of the central derivative of the $L$-series of an elliptic curve is determined by the algebraicity of a Fourier coefficient of the holomorphic part of a certain harmonic weak Maass form of weight $1/2$. In the present work, we study a generalization of the theta lift considered in \cite{AGOR}, which we call the Millson theta lift. Our lift maps weight $-2k$ to weight $1/2-k$ harmonic weak Maass forms, where $k \in \Z_{\geq 0}$, and is again related to the Shintani lift via the $\xi$-operator. We completely determine the Fourier expansion of the Millson lift of a harmonic weak Maass form $F$ of weight $-2k$, and we show that the coefficients of the holomorphic part of the lift are given by twisted traces of CM-values of the weight $0$ form $R_{-2k}^{k}F$, whereas the coefficients of the non-holomorphic part are given by twisted traces of geodesic cycle integrals of the weight $2k+2$ cusp form $\xi_{-2k}F$. Additionally, inspired by the relation of the Millson lift to the Shintani lift, we prove interesting identities between cycle integrals of $\xi_{-2k}F$ and $R_{-2k}^{2j+1}F$, with varying $j\geq 0$, for a harmonic weak Maass form $F$ of weight $-2k$. In certain cases the cycle integrals of $R_{-2k}^{2j+1}F$ do not converge, and we propose a regularization in these cases.

The necessary computations are quite involved due to the very general setup, but we believe that they will be very useful for the study of similar theta lifts in the future. Further, we hope that our lift can be used to prove a higher weight version of the aforementioned theorem of Bruinier and Ono \cite{brono} on the non-vanishing of the central values of derivatives of $L$-functions of even weight newforms.

%We also consider a certain auxiliary lift constructed from the $k=0$ theta function of \cite{AGOR} and suitable applications of iterated Maass raising and lowering operators, and show that this auxiliary lift essentially agrees with our new higher weight theta lift on harmonic weak Maass forms. 

%, generalizing results of \cite{briguka2} and \cite{briguka}.

To illustrate our results, let us simplify the setup by restricting to modular forms for the full modular group $\SL_{2}(\Z)$. In the body of the paper we also treat forms for arbitrary congruence subgroups by using the theory of vector valued modular forms for the Weil representation of an even lattice of signature $(1,2)$.

We let $z = x+iy \in \h$ and $q=e^{2\pi i z}$. Recall from \cite{brfu04} that a harmonic weak Maass form of weight $k \in \Z$ is a smooth function $F:\h \to \C$ which is invariant under the usual weight $k$ slash operation of $\SL_2(\Z)$, which is annihilated by the weight $k$ hyperbolic Laplace operator $\Delta_{k}$, and for which there is a Fourier polynomial $P_{F}=\sum_{n\leq 0} a^{+}(n)q^{n}\in\C[q^{-1}]$ such that $F - P_{F}$ is rapidly decreasing at $i\infty$.
%We first review the notion of harmonic weak Maass forms from \cite{brfu04}. We let $z = x+iy \in \h$ and $q=e^{2\pi i z}$, and we denote by $|_{k}$ the usual Petersson slash operator, where $k\in\Z$. A harmonic weak Maass form $F$ of weight $k$ is a smooth function on $\h$ which satisfies:
%\begin{enumerate}
% \item $F|_{k}\gamma=F$ for all $\gamma \in \Gamma$.
% \item $\Delta_{k} F=0$, where $\Delta_{k}$ is the weight $k$ hyperbolic Laplace operator.
% \item There is a Fourier polynomial $P_{F}=\sum_{n\leq 0} a^{+}(n)q^{n}\in\C[q^{-1}]$ such that
% \[
%  F(z)-P_{F}(z)=O(e^{-\varepsilon y}),
% \]
%as $y\rightarrow\infty$, uniformly in $x$, for some $\varepsilon>0$.
%\end{enumerate}
The space of such forms is denoted by $H_{k}^{+}$. Every $F \in H_{k}^{+}$ has a Fourier expansion consisting of a holomorphic part $F^{+}$ and a non-holomorphic part $F^{-}$,
\begin{align*}
F(z) = F^{+}(z) + F^{-}(z) = \sum_{n \gg -\infty}a^{+}(n)q^{n} + \sum_{n < 0}a^{-}(n)\Gamma(1-k,4\pi |n|y)q^{n},
\end{align*} 
where $\Gamma(s,x) = \int_{x}^{\infty}t^{s-1}e^{-t}dt$ is the incomplete Gamma function.
%Every $F \in H_{k}^{+}$ uniquely decomposes into a holomorphic and a non-holomorphic part $F = F^{+} + F^{-}$ with Fourier expansions
%\begin{align*}
%F^{+}(z) &= \sum_{n \gg -\infty}a^{+}(n)q^{n}, \\
%F^{-}(z) &= \sum_{n < 0}a^{-}(n)\Gamma(1-k,4\pi |n|y)q^{n},
%\end{align*} 
%where $\Gamma(s,x) = \int_{x}^{\infty}t^{s-1}e^{-t}dt$ is the incomplete Gamma function. 
 Harmonic weak Maass forms of half-integral weight for $\Gamma_{0}(4)$ are defined analogously. Important tools in the theory of harmonic weak Maass forms are the Maass lowering and raising operators $L_{k} = -2i\frac{\partial}{\partial \bar{z}}$ and $R_{k} = 2iy^{2}\frac{\partial}{\partial z}+ky^{-1}$, which lower or raise the weight of a real analytic modular form by $2$, as well as the surjective antilinear differential operator $\xi_{k}: H_{k}^{+} \to S_{2-k}$ defined by $\xi_{k}F(z) = 2iy^{k}\overline{\frac{\partial}{\partial \bar{z}}F(z)}$.

 Let $D \in \Z$ be a discriminant. We let $\mathcal{Q}_{D}$ be the set of integral binary quadratic forms $[a,b,c] = ax^{2} + bxy + cy^{2}$ of discriminant $b^{2}-4ac = D$. The modular group $\SL_{2}(\Z)$ acts on $\mathcal{Q}_{D}$ from the right, with finitely many classes if $D \neq 0$. For $D < 0$ we can split $\mathcal{Q}_{D} = \mathcal{Q}_{D}^{+} \sqcup \mathcal{Q}_{D}^{-}$ into the subsets of positive definite ($a > 0$) and negative definite ($a < 0$) forms. Further, for $D < 0$ the stabilizer $\overline{\SL_2(\Z)}_{Q}$ of $Q \in \mathcal{Q}_{D}$ in $\PSL_2(\Z)$ is finite, and for $D > 0$ the stabilizer $\overline{\SL_2(\Z)}_{Q}$ is trivial if $D$ is a square and infinite cyclic otherwise.
 
 Let $Q = [a,b,c] \in \mathcal{Q}_{D}$. For $D < 0$ there is an associated CM point $\alpha_{Q} = (-b+i\sqrt{|D|})/2a\in \h$, while for $D > 0$ the solutions of $a|z|^{2} + b\Re(z) + c = 0$ define a geodesic $c_{Q}$ in $\h$, which is equipped with a certain orientation.

Let $\Delta \in \Z$ be a fundamental discriminant (possibly $1$). For $k \in \Z_{\geq 0}$ with $(-1)^{k}\Delta < 0$ the $\Delta$-th Shintani lift of a cusp form $F \in S_{2k+2}$ is (in our normalization) defined by
\[
\ISh_{\Delta}(F,\tau) = -|\Delta|^{-(k+1)/2}\sum_{\substack{d > 0 \\ (-1)^{k+1}d \equiv 0,1(4)}}\sum_{ \mathcal{Q}_{d|\Delta|}/\SL_2(\Z)}\chi_{\Delta}(Q)\mathcal{C}(F,Q) \  e^{2\pi i d\tau},
\] 
where 
\[
\mathcal{C}(F,Q) = \int_{\SL_2(\Z)_{Q}\setminus c_{Q}}F(z)Q(z,1)^{k}dz
\]
is a geodesic cycle integral of $F$ and
\begin{equation*}
\chi_{\Delta}(Q)=
\begin{cases}
\Deltaover{n}, & \text{if } (a,b,c,\Delta) = 1 \text{ and $Q$ represents $n$ with $(n,\Delta) = 1$,}
\\
0, &\text{otherwise},
\end{cases}
\end{equation*}
is a genus character. It is well known that the Shintani lift of $F$ is a cusp form in $S_{k+3/2}(\Gamma_{0}(4))$ which satisfies the Kohnen plus space condition, i.e.~the $d$-th Fourier coefficient vanishes unless $(-1)^{k+1}d \equiv 0,1 (4)$.

For a $\SL_2(\Z)$-invariant function $F$ and $d < 0$ we define twisted traces by
\[
\mt_{\Delta}^{+}(F;d) = \sum_{Q^{+}_{d|\Delta|}/\SL_2(\Z)}\chi_{\Delta}(Q)\frac{F(\alpha_{Q})}{|\overline{\SL_2(\Z)}_{Q}|}, \qquad \mt_{\Delta}^{-}(F;d) = \sum_{Q^{-}_{d|\Delta|}/\SL_2(\Z)}\chi_{\Delta}(Q)\frac{F(\alpha_{Q})}{|\overline{\SL_2(\Z)}_{Q}|},
\]
and for $d > 0$ and a function $F$ transforming of weight $2k+2$ for $\SL_2(\Z)$ we define
\[
\mt_{\Delta}(F;d) = \sum_{Q_{d|\Delta|}/\SL_2(\Z)}\chi_{\Delta}(Q)\mathcal{C}(F,Q),
\]
whenever the cycle integrals $\mathcal{C}(F,Q)$ converge.

Let $F \in H_{-2k}^{+}$ be a harmonic weak Maass form. We define the Millson theta lift by
\begin{align}\label{ThetaLiftIntroduction}
\IM_{\Delta}(F,\tau) = \int_{\SL_2(\Z) \setminus \h}^{\text{reg}}F(z)\Theta_{\Delta}(\tau,z,\psi_{M,k})y^{-2k-2}dx \, dy,
\end{align}
where $\Theta_{\Delta}(\tau,z,\psi_{M,k})$ is the twisted theta function associated to a certain Schwartz function $\psi_{M,k}$, and the integral has to be regularized in a certain way to ensure convergence. The theta function, and thus also $\IM_{\Delta}(F,\tau)$, transforms like a modular form of weight $1/2-k$ in $\tau$. We are now ready to state our main result for the Millson theta lift for $\SL_{2}(\Z)$.

\begin{theorem}\label{MainTheoremIntroduction}
	Let $k \in \Z_{\geq 0}$ such that $(-1)^{k}\Delta < 0$ and let $F \in H_{-2k}^{+}$ with vanishing constant term $a^{+}(0)$. 
	\begin{enumerate}
		\item The Millson theta lift $\IM_{\Delta}(F,\tau)$ is a harmonic weak Maass form in $H_{1/2-k}^{+}(\Gamma_{0}(4))$ satisfying the Kohnen plus space condition. Further, if $F$ is weakly holomorphic, then so is $\IM_{\Delta}(F,\tau)$. 
		\item $\IM_{\Delta}(F,\tau)$ is related to the Shintani lift of $\xi_{-2k,z}F \in S_{2k+2}$ by
	\begin{align*}
	\xi_{1/2-k,\tau}\IM_{\Delta}(F,\tau) = -\frac{\sqrt{|\Delta|}}{2}\ISh_{\Delta}(\xi_{-2k,z}F,\tau).
	\end{align*}
		\item The Fourier expansion of $\IM_{\Delta}(F,\tau)$ is given by
	\begin{align*}
	\IM_{\Delta}(F,\tau) &= \sum_{d > 0}\frac{2}{\sqrt{d}}\left( \frac{1}{2\pi\sqrt{|\Delta|d}}\right)^{k}\mt_{\Delta}^{+}(R_{-2k}^{k}F;d)q^{d} \\
	&\quad-\sum_{b > 0}2i\epsilon\left( \frac{1}{2\pi i |\Delta|}\right)^{k}\sum_{\substack{n < 0}}\left( \frac{\Delta}{n}\right)a^{+}(nb)(4\pi n)^{k}q^{-|\Delta|b^{2}} \\
	&\quad - \sum_{d < 0}\frac{1}{2(\pi|d|)^{k+1/2}|\Delta|^{k/2}}\overline{\mt_{\Delta}(\xi_{-2k}F;d)}\Gamma(\tfrac{1}{2}+k,4\pi|d|v)q^{d},
	\end{align*}
	where $R_{-2k}^{k} = R_{-2}\circ R_{-4}\circ \dots \circ R_{-2k}$ is the iterated Maass raising operator and $\epsilon$ equals $1$ or $i$ according to whether $\Delta > 0$ or $\Delta < 0$.
	\end{enumerate}
\end{theorem}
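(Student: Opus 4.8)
The plan is to make part (3) the centrepiece: I would compute the Fourier expansion of $\IM_\Delta(F,\tau)$ in $\tau$ by an explicit unfolding, and then read off parts (1) and (2) from it. The first step is to record the Fourier expansion of the theta kernel in the variable $\tau$. Grouping the lattice vectors by their $Q$-norm collects them into the integral binary quadratic forms of a fixed discriminant $d|\Delta|$, so that the $d$-th $\tau$-Fourier coefficient of the kernel is a finite sum $\theta_{\Delta,d}(z)$ over $\mathcal Q_{d|\Delta|}/\SL_2(\Z)$, each summand weighted by the genus character $\chi_\Delta$ coming from the twist. The modularity of $\Theta_\Delta(\tau,z,\psi_{M,k})$ of weight $1/2-k$ in $\tau$ follows from the Weil representation attached to the lattice, while its behaviour in $z$ is arranged so that the integrand $F(z)\Theta_\Delta(\tau,z,\psi_{M,k})y^{-2k-2}\,dx\,dy$ is $\SL_2(\Z)$-invariant; I would invoke both from the general theory.

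For the expansion I would integrate $F$ against $\theta_{\Delta,d}$ term by term and unfold each class: the sum over the forms $Q$ in a fixed $\SL_2(\Z)$-class collapses the integral over $\SL_2(\Z)\backslash\h$ to one over the stabilizer quotient $\SL_2(\Z)_Q\backslash\h$. When $d|\Delta|<0$ the forms are definite, the stabilizer is finite, and integrating the localized kernel against the harmonic form $F$ in polar coordinates around the CM point $\alpha_Q$ produces $F(\alpha_Q)/|\overline{\SL_2(\Z)}_Q|$; the key technical point is that the homogeneous polynomial defining $\psi_{M,k}$ forces the weight-$0$ form $R_{-2k}^kF$ to appear, so that these terms assemble into the twisted traces $\mt_\Delta^+(R_{-2k}^kF;d)$ of the first line, with the powers of $(2\pi\sqrt{|\Delta|d})^{-1}$ emerging from the radial integration. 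When $d|\Delta|>0$ the forms are indefinite, the stabilizer is infinite cyclic, and the unfolded integral runs along the geodesic $c_Q$; after moving $\overline\partial$ off the kernel by integration by parts it becomes the cycle integral $\mathcal C(\xi_{-2k}F,Q)$, and these assemble into $\overline{\mt_\Delta(\xi_{-2k}F;d)}$ carrying the incomplete Gamma factor $\Gamma(\tfrac{1}{2}+k,4\pi|d|v)$ typical of a non-holomorphic part. The remaining middle term, supported on the square discriminants $-|\Delta|b^2$ and built from the principal-part coefficients $a^+(nb)$, comes from pairing the Fourier polynomial $P_F$ of $F$ against the corresponding coefficients of the kernel in the regularized integral near the cusp; isolating it cleanly is where most of the bookkeeping lives.

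With the expansion established, part (1) is immediate: it is of the form $\sum a^+(n)q^n+\sum_{d<0}c(d)\Gamma(\tfrac{1}{2}+k,4\pi|d|v)q^d$, which is exactly the Fourier expansion of a harmonic weak Maass form of weight $1/2-k$, and only indices $d$ with $(-1)^{k+1}d\equiv 0,1\ (4)$ occur, since the genus character forces this congruence, so the Kohnen plus-space condition holds; if $F$ is weakly holomorphic then $\xi_{-2k}F=0$, the last sum drops out, and the lift is weakly holomorphic. Harmonicity can also be seen kernel-theoretically from a differential equation relating $\Delta_{1/2-k,\tau}\Theta_\Delta$ to $\Delta_{-2k,z}\Theta_\Delta$, together with $\Delta_{-2k}F=0$ and Stokes' theorem. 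For part (2) I would apply $\xi_{1/2-k,\tau}$ termwise: it annihilates the holomorphic part, and by the standard identity $\xi_{1/2-k,\tau}\bigl(\Gamma(\tfrac{1}{2}+k,4\pi|d|v)q^d\bigr)$ is a constant multiple of $e^{2\pi i|d|\tau}$, so the coefficients $\overline{\mt_\Delta(\xi_{-2k}F;d)}$ turn into the cycle-integral coefficients of $\ISh_\Delta(\xi_{-2k}F,\tau)$; matching the constants yields the factor $-\sqrt{|\Delta|}/2$.

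The main obstacle is the definite (CM) computation that produces the iterated raising operator: one must control the interplay between the regularization and the homogeneous polynomial of $\psi_{M,k}$ and prove that integrating $F$ against the localized kernel yields precisely $R_{-2k}^kF(\alpha_Q)$, all while tracking every constant and justifying that the unfolding is legitimate despite the exponential growth of $P_F$. The square-discriminant middle term and the convergence of the geodesic cycle integrals in the non-holomorphic part are the other delicate points, both governed by the behaviour of $\Theta_\Delta(\tau,z,\psi_{M,k})$ at the cusp.
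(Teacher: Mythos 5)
Your overall architecture (compute the Fourier expansion by unfolding, then read off parts (1) and (2)) matches the paper, and several pieces of your outline are essentially what the paper does: the negative-index coefficients are indeed obtained by replacing the kernel with a $\xi_{2k+2,z}$-preimage and applying Stokes' theorem, after which the unfolded integral along the geodesic becomes $\mathcal{C}(\xi_{-2k}F,Q)$; the term supported on the square discriminants $-|\Delta|b^{2}$ does arise as the boundary contribution at the cusp (computed via Poisson summation); and part (2) is available either termwise or, as the paper does it, directly from the kernel identity $\xi_{1/2-k,\tau}\Theta(\tau,z,\psi_{M,k})=\tfrac{1}{2\sqrt{N}}\xi_{2k+2,z}\Theta(\tau,z,\varphi_{Sh,k})$ plus Stokes.

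The genuine gap is in your treatment of the positive-index coefficients. You assert that unfolding $\IM_{\Delta}(F,\tau)$ directly and ``integrating the localized kernel against $F$ in polar coordinates around the CM point'' produces $R_{-2k}^{k}F(\alpha_{Q})$, with the homogeneous polynomial of $\psi_{M,k}$ ``forcing'' the raising operator to appear. This is not a mechanism that works: for fixed $v$ the Gaussian $e^{-2\pi vR(X,z)}$ is not concentrated at $\alpha_{Q}$, and the unfolded integral is a genuine archimedean integral over all of $\Gamma_{X}\backslash\h$, not a point evaluation. The reason the paper can evaluate it is the Katok--Sarnak argument: one needs the $\Gamma_{X}$-invariant factor of the integrand to be a weight-$0$ Laplace eigenfunction, so that integration against the spherical function (a Legendre polynomial) collapses to a value at $D_{X}$ times a Laplace transform. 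The function $R_{-2k}^{k}F$ satisfies $\Delta_{0}R_{-2k}^{k}F=-k(k+1)R_{-2k}^{k}F$ and so qualifies; the weight-$(-2k)$ form $F$ paired with $p_{z}(X)Q_{X}^{k}(\bar z)$ does not. Consequently the paper does not unfold $\IM$ at all here: it first introduces the auxiliary lift $\LambdaM(F,\tau)$, which has $R_{-2k}^{k}F$ built into its definition together with the $k=0$ Millson (or Kudla--Millson) kernel and iterated lowering operators in $\tau$, proves the identity $\IM(F,\tau)=c_{k}\,\LambdaM(F,\tau)$ (Theorem \ref{thm:relationlifts}) by an induction combining the differential identities of Lemma \ref{lm:relmillsondiffweight}, the commutation relations of Lemma \ref{lm:reldiff}, and repeated applications of Stokes' theorem, and only then unfolds $\LambdaM$. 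This intermediate theorem is the engine of the whole computation and is absent from your proposal; without it (or an equivalent substitute, e.g.\ a weight-$(-2k)$ spherical-function analysis that you would have to carry out from scratch), the first line of the expansion, and with it parts (1) and (3), is not established.
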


\begin{remark}
	\begin{enumerate}
		\item The assumption $a^{+}(0) = 0$ was imposed here to simplify the exposition in the introduction and will not be used in the body of the paper. If $a^{+}(0) \neq 0$ then $\IM_{\Delta}(F,\tau)$ also has a constant coefficient, and for $k = 0$ further non-holomorphic terms appear. In fact, for $k = 0$ the $\xi$-image of $\IM_{\Delta}(F,\tau)$ turns out to be a linear combination of unary theta series of weight $3/2$. Thus, using the theta lift, one can obtain formulas for the coefficients of mock theta functions of weight $1/2$ as traces of modular functions, similarly as in \cite{alfes}. The details will be discussed in a subsequent paper.	
		\item In Section \ref{Extensions} we extend the Millson theta lift to more general harmonic weak Maass forms whose non-holomorphic parts may also grow exponentially at the cusps. Further, we extend the Shintani lift to weakly holomorphic cusp forms and obtain a new proof of Theorem 1.3.(1) of \cite{briguka}, where the authors defined a generalized Shintani lift of a weakly holomorphic cusp form $F$ as the generating series of certain regularized cycle integrals of $F$.
		\item In \cite{briguka2}, the authors studied a so-called Zagier lift, which (for level $1$ and $k > 1$) maps weight $-2k$ to weight $1/2-k$ harmonic weak Maass forms. The proof of the modularity of this lift uses the Fourier coefficients of non-holomorphic Poincar\'e series together with the fact that a harmonic Maass form of negative weight is uniquely determined by its principal part. Thus their proof does not work for $k = 0$. In fact, the Zagier lift agrees with our lift in level $1$, so our theorem generalizes Proposition 6.2. of \cite{briguka2} to arbitrary level and to $k =0$, using a very different proof.
		\item Integrating $\Theta_{\Delta}(\tau,z,\psi_{M,k})$ in $\tau$ against a harmonic Maass form of weight $1/2-k$ yields a so-called locally harmonic Maass form of weight $-2k$. This lift was considered in \cite{hoevel}, \cite{brikavia} and \cite{crawford}, and it was shown that the resulting theta lift is related to the Shimura lift via the $\xi$-operator.
	\end{enumerate}
\end{remark}

For the proof of the theorem, we use the interpretation of the Shintani lift as a theta lift and employ various differential equations between the Millson and the Shintani theta functions. Further, we define a certain auxiliary theta lift constructed from the $k=0$ Millson theta function and suitable applications of iterated Maass raising and lowering operators, and show that this auxiliary lift essentially agrees with the Millson theta lift on harmonic weak Maass forms of weight $-2k$. This identity of theta lifts is a bit surprising and interesting in its own right, but due to its quite technical appearence we chose not to state it in the introduction but refer the reader to Theorem \ref{thm:relationlifts}.
%The computation of the Fourier expansion of $\IM_{\Delta}(F,\tau)$ is quite intricate due to the necessary regularization of the theta integral.
%We remark that our results generalize parts of \cite{briguka2} to higher level and other weights, where similar results were obtained for level $1$ using Zagier lifts of non-holomorphic Poincar\'e series.

The relation between the Millson lift and the Shintani lift also yields an interesting criterion on the vanishing of the twisted $L$-function of a newform at the critical point.
	\begin{theorem}
		Let $F \in H_{-2k}^{+}$, with vanishing constant terms at all cusps if $k = 0$, such that $G =\xi_{-2k}F \in S_{2k+2}$ is a normalized newform. For $(-1)^{k}\Delta < 0$ the lift $\IM_{\Delta}(F,\tau)$ is weakly holomorphic if and only if $L(G,\chi_{\Delta},k+1) = 0$.
	\end{theorem}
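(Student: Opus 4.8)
\emph{Proof strategy.} The plan is to reduce the weak holomorphicity of $\IM_\Delta(F,\tau)$ to the vanishing of a half-integral weight cusp form, and then to read the central $L$-value off that cusp form via Waldspurger's theorem. First I would use that an element of $H_{1/2-k}^{+}(\Gamma_0(4))$ is weakly holomorphic exactly when it lies in the kernel of $\xi_{1/2-k}$, since the $\xi$-operator annihilates precisely the holomorphic members of the space and these are weakly holomorphic. By Theorem \ref{MainTheoremIntroduction}(1) the lift $\IM_\Delta(F,\tau)$ is such a form, and part (2) of the same theorem identifies
$$\xi_{1/2-k,\tau}\IM_\Delta(F,\tau) = -\tfrac{\sqrt{|\Delta|}}{2}\,\ISh_\Delta(G,\tau),\qquad G=\xi_{-2k}F.$$
As the scalar is nonzero, this gives the equivalence that $\IM_\Delta(F,\tau)$ is weakly holomorphic if and only if $\ISh_\Delta(G,\tau)=0$. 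The hypothesis that the constant terms of $F$ vanish when $k=0$ is precisely what rules out the extra unary theta contributions to the $\xi$-image noted in the remark following Theorem \ref{MainTheoremIntroduction}, so that this clean identity, and hence the reduction, is valid for every $k\ge 0$.

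Next I would bring in the assumption that $G$ is a normalized newform. The Shintani lift $\ISh_\Delta(G,\tau)$ is a cusp form in the Kohnen plus space $S_{k+3/2}^{+}(\Gamma_0(4))$, and, being the Shintani lift of a Hecke eigenform, it is Hecke-equivariant and therefore proportional to the fixed half-integral weight newform $g$ attached to $G$ under the Shimura correspondence. Writing $g=\sum_d c_g(d)q^d$, the Gross--Kohnen--Zagier factorization of twisted traces of geodesic cycle integrals, of the form $\mt_\Delta(G;d)=c\,c_g(|\Delta|)\,c_g(d)$ with $c\neq 0$, pins down the proportionality constant and yields $\ISh_\Delta(G,\tau)=\kappa\, c_g(|\Delta|)\,g(\tau)$ with $\kappa\neq 0$ and $g\neq 0$. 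Hence $\ISh_\Delta(G,\tau)=0$ if and only if $c_g(|\Delta|)=0$.

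Finally I would invoke Waldspurger's theorem \cite{Waldspurger} in the explicit form due to Kohnen and Zagier, which expresses $|c_g(|\Delta|)|^2$ as a nonzero multiple of the central value $L(G,\chi_\Delta,k+1)$, the proportionality factor involving $|\Delta|^{k+1/2}$ together with the Petersson norms of $g$ and $G$, all of which are nonzero. The standing hypothesis $(-1)^k\Delta<0$ places $\Delta$ in the range in which this formula applies and fixes the sign of the functional equation so that nonvanishing at the center is permitted. Therefore $c_g(|\Delta|)=0$ if and only if $L(G,\chi_\Delta,k+1)=0$, and chaining the three equivalences proves the theorem.

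The main obstacle is the middle step. Establishing the exact proportionality $\ISh_\Delta(G)=\kappa\,c_g(|\Delta|)\,g$ with a provably nonzero constant, and having the Waldspurger--Kohnen--Zagier formula available in the generality treated in the body of the paper, require care: whereas the level one case is classical, for arbitrary congruence subgroups one must work with the newform theory in the plus space and the vector-valued Waldspurger-type formulas for the Weil representation of the signature $(1,2)$ lattice, and one must check that the Atkin--Lehner data of $G$ are compatible with the sign condition on $\Delta$, so that $c_g(|\Delta|)$ is indeed the coefficient governed by $L(G,\chi_\Delta,k+1)$.
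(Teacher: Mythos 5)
Your argument is correct and follows essentially the same route as the paper: both reduce weak holomorphicity of $\IM_{\Delta}(F,\tau)$ to the vanishing of $\ISh_{\Delta}(\xi_{-2k}F,\tau)$ via the $\xi$-operator relation (with the vanishing constant terms for $k=0$ killing the unary theta contribution), and then settle the vanishing of the Shintani lift by the Gross--Kohnen--Zagier/Waldspurger formula. The only cosmetic difference is that you factor the lift explicitly as $\kappa\,c_{g}(|\Delta|)\,g$ through the Shimura newform $g$, whereas the paper directly cites Corollary~2 in Section~II.4 of \cite{gkz}, which expresses $|c_{D}(\ISh_{\Delta}(G))|^{2}$ as a nonzero multiple of $L(G,\chi_{\Delta},k+1)L(G,\chi_{D},k+1)$ and thereby sidesteps the proportionality-constant issue you flag as the main obstacle.
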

	
	\begin{remark}  For the general result regarding forms of higher level, see Theorem \ref{NonvanishingTheorem}. A version of this theorem for square-free level $N$ and odd weight $k$ has been proved in \cite{alfes}, Theorem 1.1., using the same techniques. Further, the above theorem in the case of level $1$ and $k > 0$ already appeared in \cite{briguka2}, Corollary 1.3, but the proof used very different arguments, i.e. the Zagier lift of non-holomorphic Poincar\'e series.
	\end{remark}

%\begin{remark}
%The theorem sheds new light on nonvanishing conditions for the %twisted central L-values. 
%\end{remark}
The Fourier coefficients of the non-holomorphic part of the Millson lift in Theorem \ref{MainTheoremIntroduction} involve cycle integrals of the cusp form $\xi_{-2k}F$, which reflects the relation between the Millson and the Shintani lift on the level of Fourier expansions. On the other hand, the fact that the Millson theta lift agrees up to some constant with a theta lift of the real-analytic modular function $R_{-2k}^{k}F$, see Theorem \ref{thm:relationlifts}, suggests that the Fourier coefficients of the non-holomorphic part of the Millson lift should also be expressible in terms of cycle integrals of $R_{-2k}^{k}F$. Inspired by this idea, we prove the following identities between the cycle integrals of different types of modular forms.

\begin{theorem}
	Let $D > 0$ be a discriminant which is not a square and let $Q \in \mathcal{Q}_{D}$. Let $k \in \Z_{\geq 0}$ and $F \in H_{-2k}^{+}$. For $j \in \Z_{\geq 0}$ we have
	\[
	\mathcal{C}(R_{-2k}^{2j+1}F,Q) = \frac{1}{D^{k-j}}\frac{j!(k-j)!(2k)!}{k!(2k-2j)!}\overline{\mathcal{C}(\xi_{-2k}F,Q)},
	\]
	where $R_{-2k}^{2j+1} = R_{-2k+2j}\circ \dots \circ R_{-2k}$ is the iterated Maass raising operator.
\end{theorem}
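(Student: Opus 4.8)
The plan is to prove the identity for each fixed $Q=[a,b,c]$ directly, exploiting that for non-square $D$ the quotient $\SL_2(\Z)_Q\setminus c_Q$ is a \emph{closed} geodesic, so that the integrands are smooth on a compact curve (no regularization is needed) and integration by parts along the loop produces no boundary terms. Two elementary facts about the geodesic do all the work. First, a short computation shows that for $z\in c_Q$ one has $|Q(z,1)|^{2}=Dy^{2}$ and
\[
\partial_z Q(z,1)=2az+b=-\tfrac{i}{y}\,Q(z,1),
\]
and consequently $\overline{Q(z,1)}=Dy^{2}/Q(z,1)$ and the $1$-form $Q(z,1)^{-1}\,dz$ is real on $c_Q$, i.e. $\overline{Q(z,1)}^{-1}\,d\bar z=Q(z,1)^{-1}\,dz$. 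Second, I record the shadow relation $\partial_{\bar z}F=\tfrac{i}{2}y^{2k}\,\overline{\xi_{-2k}F}$, which is immediate from the definition of $\xi_{-2k}$.

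The engine of the proof is a single integration-by-parts identity. For a smooth weight-$w$ function $f$ and an exponent $p$ with $w-2p=0$, the function $f\,Q(z,1)^{p}$ is invariant under $\SL_2(\Z)_Q$, hence single-valued on the loop; integrating the total derivative of $f\,Q(z,1)^{p}$ around it, using holomorphicity of $Q(z,1)$ and replacing $\partial_z Q(z,1)=-iQ(z,1)/y$, yields
\[
\int_{\SL_2(\Z)_Q\setminus c_Q}(R_{w}f)\,Q(z,1)^{p}\,dz=(w-2p)\int y^{-1}f\,Q(z,1)^{p}\,dz-2i\int(\partial_{\bar z}f)\,Q(z,1)^{p}\,d\bar z .
\]
The decisive observation is that applying this with $f=R_{-2k}^{2j}F$ (of weight $w=-2k+4j$) and $p=2j-k$ makes $w-2p$ vanish identically, so only the antiholomorphic term survives: $\mathcal{C}(R_{-2k}^{2j+1}F,Q)=-2i\int(\partial_{\bar z}R_{-2k}^{2j}F)\,Q(z,1)^{2j-k}\,d\bar z$.

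It then remains to evaluate $\partial_{\bar z}R_{-2k}^{2j}F$. Writing $\partial_{\bar z}=\tfrac{i}{2y^{2}}L_{w}$ and using the standard relations $L_{\kappa+2}R_{\kappa}=-\Delta_{\kappa}-\kappa$ and $\Delta_{\kappa+2}R_{\kappa}=R_{\kappa}(\Delta_{\kappa}+\kappa)$ together with $\Delta_{-2k}F=0$, one finds that $R_{-2k}^{m}F$ is a $\Delta$-eigenform with eigenvalue $m(m-2k-1)$, whence $L_{w}R_{-2k}^{2j}F=2j(2k-2j+1)\,R_{-2k}^{2j-1}F$. Substituting this, converting $y^{-2}=D/(Q(z,1)\overline{Q(z,1)})$, and using $\overline{Q(z,1)}^{-1}\,d\bar z=Q(z,1)^{-1}\,dz$ to turn the $d\bar z$-integral back into a cycle integral, the power of $Q(z,1)$ drops by exactly two and I obtain the descent
\[
\mathcal{C}(R_{-2k}^{2j+1}F,Q)=2Dj(2k-2j+1)\,\mathcal{C}(R_{-2k}^{2j-1}F,Q).
\]
The base case $j=0$ comes from the same identity: with $f=F$ and $p=-k$ the coefficient again vanishes, and inserting the shadow relation and $\overline{Q(z,1)}=Dy^{2}/Q(z,1)$ gives $\mathcal{C}(R_{-2k}F,Q)=D^{-k}\,\overline{\mathcal{C}(\xi_{-2k}F,Q)}$. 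Iterating the recursion and checking $\prod_{i=1}^{j}2Di(2k-2i+1)=D^{j}\tfrac{j!(k-j)!(2k)!}{k!(2k-2j)!}$ yields the claimed constant.

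I expect the main obstacle to be the careful evaluation of $L_{w}R_{-2k}^{2j}F$ as the explicit scalar multiple $2j(2k-2j+1)\,R_{-2k}^{2j-1}F$ of $R_{-2k}^{2j-1}F$ — i.e. pinning down the eigenvalue $m(m-2k-1)$ with the correct sign through the commutation relations and harmonicity — and then the bookkeeping that matches the telescoping product to the stated ratio of factorials. A secondary point requiring care is fixing the orientation of $c_Q$ so that $\overline{Q(z,1)}^{-1}\,d\bar z=Q(z,1)^{-1}\,dz$ holds with the right sign.
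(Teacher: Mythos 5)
Your proposal is correct and follows essentially the same route as the paper's own proof (Proposition \ref{CycleIntegralInduction} and Corollary \ref{CycleIntegralMainTheorem}): an integration by parts around the closed geodesic in which the total-derivative term vanishes by modularity, combined with the identity $L_{-2k+4j}R_{-2k}^{2j}F = 2j(2k-2j+1)R_{-2k}^{2j-1}F$ obtained from the commutation relations and the harmonicity of $F$, leading to the same base case $\mathcal{C}(R_{-2k}F,Q)=D^{-k}\overline{\mathcal{C}(\xi_{-2k}F,Q)}$ and the same two-step descent $\mathcal{C}(R_{-2k}^{2j+1}F,Q)=2Dj(2k-2j+1)\mathcal{C}(R_{-2k}^{2j-1}F,Q)$. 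The only difference is cosmetic: the paper first conjugates the geodesic to the imaginary axis and computes with the explicit parametrization $\int_{1}^{\varepsilon^{2}}(\cdot)(iy)\,y^{-\ell}\,dy$, whereas you work intrinsically with the form $Q(z,1)^{p}\,dz$ and the identities $|Q(z,1)|^{2}=Dy^{2}$ and $2az+b=-iQ(z,1)/y$ on $c_{Q}$.
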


We prove this identity by a direct computation using Stokes' theorem and commutation relations for the differential operators involved. 

\begin{remark}
	\begin{enumerate}
		\item It is interesting to note that the cycle integral of $\xi_{-2k}F$ on the right-hand side does not depend on $j$. In particular, the cycle integrals of $R_{-2k}^{2j+1}F$ for different choices of $j$ are related by a very simple explicit constant.
 		\item The general result is given in Corollary \ref{CycleIntegralMainTheorem}. By plugging in special values for $j$, e.g. $j = k$, we obtain further interesting formulas (see Corollaries \ref{CycleIntegralTheorem} and \ref{CycleIntegralTheorem2}), which where previously given in Theorem 1.1. from \cite{briguka2} and Theorem 1.1. from \cite{briguka}. The above identity gives a unified proof for these two previously known, but seemingly unrelated results.
		\item We also define a regularized cycle integral $\mathcal{C}^{\reg}(R_{-2k}^{2j+1}F,Q)$ in the case that the discriminant of $Q$ is a square and the associated geodesic is infinite, and derive an analog of the above theorem in this situation, see Section \ref{InfiniteGeodesics}.
	\end{enumerate}
\end{remark}

The paper is organized as follows. In Section $2$ we introduce the basic setup and recall the necessary facts about vector valued harmonic weak Maass forms for the Weil representation associated with an even lattice of signature $(1,2)$. Sections $3$ and $4$ are devoted to the study of the (untwisted) Millson and Shintani theta functions and the properties of the corresponding theta lifts. In particular, the relation between the Millson and the Shintani theta lift is proven in Section $4$. The Fourier expansion of the untwisted Millson theta lift is computed in Section $5$. The necessary calculations are quite delicate and take up a major part of this work.
%We also define the traces of CM values and geodesic cycle integrals there.
In Section $6$ we use a method from \cite{ae} to twist the results of the previous sections, i.e.~we derive the relation between the twisted Millson and Shintani theta functions and the Fourier expansion of the twisted lifts from the corresponding untwisted results. Finally, in Section $7$ we consider identities of cycle integrals of different types of modular forms.

\section*{Acknowledgements}

We thank Kathrin Bringmann, Jan Hendrik Bruinier, Stephan Ehlen, Jens Funke and Ben Kane for their help. 

The authors were partially supported by the DFG Research Unit FOR 1920 "Symmetry, Geomerty and Arithmetic``.

\section{Preliminaries}

For a positive rational number $N$ we consider the rational quadratic space of signature $(1,2)$ given by
\[
V=\left\{X=\begin{pmatrix} x_{2} & x_{1}\\ x_{3}& -x_{2}\end{pmatrix}; x_{1},x_{2},x_{3} \in \Q\right\}
\]
with the quadratic form $Q(X)=N\text{det}(X)$.
The associated bilinear form is $(X,Y)=-N\text{tr}(XY)$ for $X,Y \in V$.
The group $\SL_2(\Q)$ acts as isometries on $V$ by $gX :=g X g^{-1}$.

%We let $G=\mathrm{Spin}(V) \simeq \SL_2$, viewed as an algebraic group over $\Q$
%and write $\overline{G}$ for its image in $\mathrm{SO}(V)\simeq\mathrm{PSL}_2$.
%By $D$ we denote the associated symmetric space. It can be realized as the Grassmannian of lines 
%in $V(\R)$ on which the quadratic form $Q$ is positive definite,
%\[
%D \simeq \left\{z\subset V(\R);\ \text{dim}(z)=1 \text{ and } Q\vert_{z} >0 \right\}.
%\]
We let $D$ be the Grassmannian of lines in $V(\R) = V\otimes \R$ on which the quadratic form $Q$ is positive definite,
\[
D = \left\{z\subset V(\R);\ \text{dim}(z)=1 \text{ and } Q\vert_{z} >0 \right\},
\]
% In particular, $G(\Q)\simeq\SL_2(\Q)$.
and we identify $D$ with the complex upper half-plane $\h$ by associating to $z = x + iy \in \h$ the positive line generated by
 \[
 X_{1}(z)=\frac{1}{\sqrt{2N}y} \begin{pmatrix} -x &|z|^{2} \\  -1 & x \end{pmatrix}.
 \]
%and we identify $D$ with the complex upper half-plane $\h$ using the isomorphism
%$
% z \mapsto \R X_{1}(z),
%$
%where, for $z=x+iy \in \h$, we pick as a generator for the associated positive line
% \[
% X_{1}(z):=\frac{1}{\sqrt{2N}y} \begin{pmatrix} -x &|z|^{2} \\  -1 & x \end{pmatrix}.
% \]
%Note that $(X_{1}(z),X_{1}(z))=1$. The group $G$ acts on $\h$ by fractional  linear transformations and
%the isomorphism above is $G$-equivariant, that is, $gX_{1}(z)=X_{1}(gz)$ for $g\in G(\R)$.
The group $\SL_{2}(\R)$ acts on $\h$ by fractional linear transformations and
the identification above is $\SL_{2}(\R)$-equivariant, that is, $gX_{1}(z)=X_{1}(gz)$ for $g\in \SL_{2}(\R)$ and $z \in \h$.

Let $L\subset V$ be an even lattice of full rank. We write $L'$ for the dual lattice of $L$. Let $\G$ be a congruence subgroup of $\SL_{2}(\Q)$ that takes $L$ to itself and acts trivially on the discriminant group $L'/L$. Further, we let $M=\G\setminus D$ be the corresponding modular curve.
% We assume that the attached locally symmetric space $M=\G\setminus D $ is non-compact, i.e.~a modular curve.

\begin{example}\label{ex:lattice}
	A particularly interesting lattice is given by
	\[
	L = \left\{ \begin{pmatrix}b & -a/N \\ c & -b\end{pmatrix}: a,b,c \in \Z\right\}.
	\]
	Its dual lattice is
	\[
	L' = \left\{ \begin{pmatrix}b/2N & -a/N \\ c & -b/2N\end{pmatrix}: a,b,c  \in\Z\right\}.
	\]
	We see that $L'/L$ is isomorphic to $\Z/2N\Z$ with quadratic form $x \mapsto -x^{2}/4N$. Thus the level of $L$ is $4N$. The group $\Gamma = \Gamma_{0}(N)$ acts on $L$ by conjugation and fixes the classes of $L'/L$, and the locally symmetric space is the modular curve $Y_{0}(N) = \Gamma\setminus D$ under the identification above. For fixed $m \in \Q$, the elements $X = \big(\begin{smallmatrix}b/2N & -a/N \\ c & -b/2N\end{smallmatrix}\big) \in L'$ with $Q(X) = m$ correspond to integral binary quadratic forms $[cN,-b,a] = cNx^{2} -bxy + ay^{2}$ of discriminant $-4Nm$, and this identification is compatible with the corresponding actions of $\Gamma_{0}(N)$.
\end{example}

%We can view $\G_0(N)$ as a discrete subgroup of $\mathrm{Spin}(V)$ and we write $M=\G_0(N) \setminus D$ for the attached locally symmetric space.

\subsection{Cusps} 

We identify the set of isotropic lines $\mathrm{Iso}(V)$ in $V$
with $P^1(\Q)=\Q \cup \left\{ \infty\right\}$ via
\[
\psi: P^1(\Q) \rightarrow \mathrm{Iso}(V), \quad \psi((\alpha:\beta))
 = \mathrm{span}\left(\begin{pmatrix} \alpha\beta &\alpha^2 \\  -\beta^2 & -\alpha\beta \end{pmatrix}\right).
\]
The map $\psi$ is a bijection and $\psi(g(\alpha:\beta))=g\psi((\alpha:\beta))$ for $g \in \SL_{2}(\Q)$. 
Thus, the cusps of $M$ (i.e.~the $\G$-classes of $P^1(\Q)$) can be identified with the $\G$-classes of $\mathrm{Iso}(V)$.

If we set $\ell_\infty := \psi(\infty)$, then $\ell_\infty$ is spanned by 
$X_\infty=\left(\begin{smallmatrix}0 & 1 \\ 0 & 0\end{smallmatrix}\right)$. 
For $\ell \in \mathrm{Iso}(V)$ we pick $\sigma_{\ell} \in\SL_2(\Z)$ 
such that $\sigma_{\ell}\ell_\infty=\ell$. We let $\Gamma_{\ell}$ be the stabilizer of $\ell$ in $\Gamma$. Then $\sigma_{\ell}^{-1}\overline{\Gamma}_{\ell}\sigma_{\ell}$ is generated by $\left(\begin{smallmatrix}1 & \alpha_{\ell} \\ 0 & 1 \end{smallmatrix}\right)$ for some $\alpha_\ell \in \Q_{>0}$ which we call the width of the cusp $\ell$. For each $\ell$, there is a $\beta_{\ell} \in \Q_{>0}$ such that $\left(\begin{smallmatrix}0 & \beta_{\ell}  \\ 0 & 0\end{smallmatrix}\right)$ is a primitive element of $\ell_{\infty}\cap \sigma_{\ell}^{-1}L$. We write $\varepsilon_{\ell} = \alpha_{\ell}/\beta_{\ell}$. The quantities $\alpha_{\ell},\beta_{\ell}$ and $\varepsilon_{\ell}$ only depend on the $\Gamma$-class of $\ell$.

We compactify the modular curve $M$ to a compact Riemann surface $\overline{M}$ by adding a point for each cusp $\ell\in\G\setminus \Iso(V)$, and we denote this point again by $\ell$. 
%For each such cusp $\ell$ there is a neighbourhood $U_\ell$ of $\ell$ such that $z=\gamma z'$ for some $\gamma\in\G$ and $z,z'\in U_\ell$ implies that $\gamma\in\G_\ell$. 
Write $q_\ell=\exp(2\pi i \sigma_\ell^{-1}z/\alpha_\ell)$ for the chart around $\ell$. We define $D_{1/T}=\{w\in\C\,:\, |w|<\frac{1}{2\pi T}\}$ for $T>0$. Note that if $T$ is sufficiently big, then the inverse images $q_\ell^{-1}D_{1/T}$ are disjoint in $M$. We define the truncated modular curve by
\begin{align}\label{eq:TruncFun}
M_T= \overline{M}\setminus \coprod_{\ell\in\G\setminus \Iso(V)} q_\ell^{-1}D_{1/T}.
\end{align}

%%%%%%%%%%%%%%%%%%%%%%%%%%%%%%%%%%%%%%%%%%%%%%%%%%%%%%%%%%%%%%
%%%%%%%%%%%%%%%%%%%%%%%%%%%%%%%%%%%%%%%%%%%%%%%%%%%%%%%%%%%%%%

\subsection{The Weil representation and harmonic weak Maass forms}\label{sec:weil}
We recall the basic facts about vector valued harmonic weak Maass forms for the Weil representation from \cite{brfu04}.

By $\Mp_2(\Z)$ we denote the integral metaplectic group consisting of pairs
$(\gamma, \phi)$, where $\gamma = {\smallabcd \in \SL_2(\Z)}$ and $\phi:\h\rightarrow \C$
is a holomorphic function with $\phi(\tau)^{2}=c\tau+d$. We let $\C[L'/L]$ be the group ring of $L'/L$, generated by the basis vectors $\e_h$ for $h \in L'/L$ and equipped with the scalar product $\langle \e_{h} ,\e_{h'} \rangle = \delta_{h,h'}$, which is conjugate-linear in the second variable. The Weil representation $\rho_{L}$ of $\Mp_{2}(\Z)$ is a unitary representation which is defined for the generators $S=(\smallSmatrix,\sqrt{\tau})$ and $T=(\smallTmatrix, 1)$ of $\Mp_2(\Z)$ and $h \in \C[L'/L]$ by the formulas
\begin{align*}
 \rho_{L}(T) \e_h &= e(Q(h)) \e_h,\\
  \rho_{L}(S) \e_h &= \frac{\sqrt{i}}{\sqrt{\abs{L'/L}}}
				\sum_{h' \in L'/L} e(-(h',h)) \e_{h'},
\end{align*}
where $e(a):=e^{2\pi i a}$. The dual Weil representation will be denoted by $\overline{\rho}_{L}$.
 
%, and let $A_{k,\rho}$ be the vector space of functions 
%$f: \h \rightarrow \C[L'/L]$, such that for $(\gamma,\phi) \in \Mp_2(\Z)$ we have 
%\begin{equation*}
%	f(\gamma \tau) = \phi(\tau)^{2k} \rho(\gamma, \phi) f(\tau).
%\end{equation*}
A twice continuously differentiable function $F: \h \to \C[L'/L]$ is called a harmonic weak Maass form of weight $k \in \frac{1}{2}\Z$ with respect to $\rho_{L}$ if it satisfies
\begin{enumerate}
 \item $\Delta_k F=0$, where
 \[
 \Delta_k=-v^2\left(\frac{\partial^2}{\partial u^2}+\frac{\partial^2}{\partial v^2}\right)
+ikv\left(\frac{\partial}{\partial u}+i\frac{\partial}{\partial v}\right) \qquad 
\]
with $\tau = u + iv$ is the weight $k$ hyperbolic Laplace operator.
 \item $F(\gamma \tau) = \phi(\tau)^{2k} \rho_{L}(\gamma, \phi)F(\tau)$ for all $(\gamma,\phi) \in \Mp_2(\Z)$,
\item There is a Fourier polynomial $P_{F}(\tau) = \sum_{h \in L'/L}\sum_{n \leq 0}a^{+}(n,h)q^{n}\e_{h}$, called the principal part of $F$, such that 
\[
F(\tau) - P_{F}(\tau)  =O(e^{-\varepsilon v})
\]
as $v \to \infty$, uniformly in $u$, for some $\varepsilon > 0$.
 \end{enumerate}
We denote the space of such functions by $H^{+}_{k,\rho_{L}}$. Further, we let $M^{\text{!}}_{k,\rho_{L}}$ be the subspace of weakly holomorphic modular forms (consisting of the holomorphic forms in $H^{+}_{k,\rho_{L}}$).

A harmonic weak Maass form uniquely decomposes into a holomorphic and a non-holomorphic part $F=F^{+}+F^{-}$ with Fourier expansions
\begin{align*}
F^{+}(\tau)&=\sum\limits_{h\in 
L'/L}\sum\limits_{n\gg -\infty}a^{+}(n,h)q^{n} \mathfrak{e}_h,
\\
F^{-}(\tau)&=\sum\limits_{h\in L'/L} \sum\limits_{n < 0}a^{-}(n,h)\G(1-k,4\pi |m|v)q^{n}\mathfrak{e}_h,
\end{align*}
where $\G(s,x) = \int_{x}^{\infty}t^{s-1}e^{-t}dt$ denotes the incomplete $\G$-function.
% Note that $F^{+}(\tau) = O(e^{Cv})$ for some $C > 0$ and $F^{-}(\tau) = O(e^{-C'v})$ for some $C' > 0$, uniformly in $u$, as $v \to \infty$.

The space $H_{k}^{+}(\Gamma)$ of scalar valued harmonic weak Maass forms of weight $k \in \Z$ for $\Gamma$ is defined analogously, with a growth condition as in (3) at each cusp.

%%%%%%%%%%%%%%%%%%%%%%%%%%%%%%%%%%%%%%%%%%%%%%%%%%%%%%%%%%%%%%
%%%%%%%%%%%%%%%%%%%%%%%%%%%%%%%%%%%%%%%%%%%%%%%%%%%%%%%%%%%%%%
\subsection{Differential Operators}\label{sec:diffop}
The Maass lowering and raising operators are defined by
\[
L_k=-2iv^2\frac{\partial}{\partial \bar{\tau}} \quad\quad\quad\mathrm{and}
\quad\quad\quad R_k=2i\frac{\partial}{\partial\tau}+kv^{-1}.
\]
%The lowering operator $L_k$ takes automorphic forms of weight $k$ to automorphic forms of weight $k-2$ and the raising operator $R_k$ takes automorphic forms of weight $k$ to automorphic forms of weight $k+2$.
They lower or raise the weight of an automorphic form of weight $k$ by $2$.
 Moreover, these operators commute with the slash operator
and they are related to the weighted Laplace operator by
\begin{align}\label{eq:DkLkRk}
-\Delta_{k} &= L_{k+2}R_{k}+k = R_{k-2}L_{k}.
\end{align}
This implies the commutation relations
\begin{align}
\label{eq:RkDk}
 R_k\Delta_k & =(\Delta_{k+2}-k)R_k, \\
\label{eq:LkDk}
 \Delta_{k-2}L_k &= L_k(\Delta_k+2-k).
\end{align}
We also define iterated versions of the lowering and raising operators by
\[
 L_k^n=L_{k-2(n-1)}\circ\cdots L_{k-2}\circ L_{k},
 \quad\quad
 R_k^n=R_{k+2(n-1)}\circ \cdots\circ R_{k+2} \circ R_{k}.
\]
For $n=0$ we set $L_k^0=R_k^0=\text{id}$. Using (\ref{eq:RkDk}) and (\ref{eq:LkDk}) inductively one can find many interesting commutation relations between the iterated lowering and raising operators and the weighted Laplacian. We collect some identities for later use.
\begin{lemma}\label{lm:reldiff}
For $k \in \Z_{\geq 0}$ and $\ell = 0,\dots,k$ we have
			\[
			\Delta_{-2\ell}R_{-2k}^{k-\ell} = R_{-2k}^{k-\ell}\left(\Delta_{-2\ell}-(k-\ell)(k+\ell+1)\right).
			\]
If $k$ is even then
\begin{align*}
\Delta_{1/2-k}L_{1/2}^{k/2}&=L_{1/2}^{k/2} \left(\Delta_{1/2}+\frac{k}{4}(k+1)\right), \\
 \Delta_{3/2+k}R_{3/2}^{k/2}&=R_{3/2}^{k/2}\left(\Delta_{3/2}+\frac{k}{4}(k+1)\right),
\end{align*}
and if $k$ is odd we have
\begin{align*}
 \Delta_{1/2-k}L_{3/2}^{(k+1)/2}&=L_{3/2}^{(k+1)/2}\left(\Delta_{3/2}+\frac{k}{4}(k+1)\right),
 \\
\Delta_{3/2+k}R_{1/2}^{(k+1)/2}&=R_{1/2}^{(k+1)/2} \left(\Delta_{1/2}+\frac{k}{4}(k+1)\right).
\end{align*}
\end{lemma}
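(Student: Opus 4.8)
The plan is to deduce all five identities from a single general commutation formula, proved by induction on the number of iterated operators, using only the two basic relations \eqref{eq:RkDk} and \eqref{eq:LkDk} together with \eqref{eq:DkLkRk}. Conceptually there is nothing deep to do beyond bookkeeping: every time the Laplace operator is pushed past one raising or lowering operator it picks up an explicit scalar, and these scalars simply accumulate additively.

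Concretely, I would first establish that for any weight $\kappa \in \frac{1}{2}\Z$ and any integer $n \geq 0$,
\[
\Delta_{\kappa+2n}\,R_{\kappa}^{n} = R_{\kappa}^{n}\Bigl(\Delta_{\kappa} + \sum_{j=0}^{n-1}(\kappa+2j)\Bigr), \qquad \Delta_{\kappa-2n}\,L_{\kappa}^{n} = L_{\kappa}^{n}\Bigl(\Delta_{\kappa} + \sum_{j=0}^{n-1}(2-\kappa+2j)\Bigr).
\]
The base case $n=1$ is exactly \eqref{eq:RkDk} and \eqref{eq:LkDk} rewritten as $\Delta_{\kappa+2}R_\kappa = R_\kappa(\Delta_\kappa+\kappa)$ and $\Delta_{\kappa-2}L_\kappa = L_\kappa(\Delta_\kappa + 2 - \kappa)$. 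For the inductive step I would write $R_\kappa^{n+1} = R_{\kappa+2n}\circ R_\kappa^{n}$, commute $\Delta_{\kappa+2(n+1)}$ past the outermost factor $R_{\kappa+2n}$ via \eqref{eq:RkDk} with weight $\kappa+2n$, and then apply the inductive hypothesis to the remaining $R_\kappa^{n}$; the scalar produced is the previous sum augmented by $\kappa+2n$, which is precisely the sum up to index $n$. The lowering case is handled identically using \eqref{eq:LkDk}.

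Finally I would specialize the two master formulas and evaluate the arithmetic sums via the elementary identities $\sum_{j=0}^{n-1}(\kappa+2j) = n\kappa + n(n-1)$ and $\sum_{j=0}^{n-1}(2-\kappa+2j) = n(n+1-\kappa)$. For the first identity I take $\kappa = -2k$ and $n = k-\ell$, so that the sum equals $(k-\ell)(-2k) + (k-\ell)(k-\ell-1) = -(k-\ell)(k+\ell+1)$, which yields the stated constant; here I would point out that the Laplace operator on the right carries the input weight, i.e. it should read $\Delta_{-2k}$ rather than $\Delta_{-2\ell}$, since the two differ as differential operators on weight $-2k$ forms. For the four half-integral identities I take $(\kappa,n)$ equal to $(1/2,k/2)$ and $(3/2,k/2)$ when $k$ is even, and $(3/2,(k+1)/2)$ and $(1/2,(k+1)/2)$ when $k$ is odd, with the first of each pair being the lowering statement and the second the raising statement.

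The computation has no genuine obstacle: the only real care required is to keep the weight index of each operator in the composition straight, since it shifts at every step, and to verify that the four a priori different choices of $(\kappa,n)$ all collapse to the same constant $\tfrac{k}{4}(k+1)$ — this uniform collapse is exactly what makes the clean statement of the lemma possible, and it is worth checking each of the four cases separately rather than trusting the pattern.
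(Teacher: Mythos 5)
Your proof is correct and is exactly the inductive argument from \eqref{eq:RkDk} and \eqref{eq:LkDk} that the paper indicates (the lemma is stated there without a written proof), and all four half-integral constants do collapse to $\tfrac{k}{4}(k+1)$ as you verify. Your observation about the first identity is also right: the Laplacian inside the parentheses on the right-hand side should carry the input weight $-2k$ rather than $-2\ell$, and this corrected form is the one the paper actually uses when it applies the lemma to a form annihilated by $\Delta_{-2k}$ in the proof of Theorem \ref{thm:relationlifts}.
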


We also require the antilinear differential operator
\[
\xi_k F=v^{k-2}\overline{L_k F(\tau)}=R_{-k}v^k\overline{F(\tau)} = 2iv^{k}\overline{\frac{\partial}{\partial \bar{\tau}}F(\tau)}
\] 
from \cite{brfu04}. It defines a surjective map $\xi_k: H^+_{k,\rho_{L}}\rightarrow S_{2-k,\bar{\rho}_{L}}$ and acts on the Fourier expansion of $F \in H_{k,\rho_{L}}^{+}$ by
\[
\xi_{k}F = \xi_{k}F^{-} = -\sum_{h \in L'/L}\sum_{n > 0}\overline{a^{-}(-n,h)}(4\pi n)^{1-k}q^{n}\e_{h}.
\]
%In \cite{brfu04} Bruinier and Funke define a bilinear pairing between 
%the spaces $M_{2-k,\bar{\rho}_{L}}$ and $H^+_{k,\rho_{L}}$ by
%\[
%\left\{G,F\right\}=\left(G,\xi_k F\right)_{2-k,\bar{\rho}_{L}},
%\]
%where $G\in M_{2-k,\bar{\rho}_{L}}$, $G\in H^+_{k,\rho_{L}}$, and $(\cdot,\cdot)$ denotes the Petersson scalar product.
%% They obtain a duality result for $g\in M_{2-k,\bar{\rho}}$ and $f\in H^+_{k,\rho}$ (see Proposition 3.5 of \cite{BrFu04}). 
% This pairing extends to weakly holomorphic forms $G\in M^{\text{!}}_{2-k,\bar{\rho}_{L}}$ (see \cite[Proposition 2.3]{alfes}), namely by
%\begin{equation}\label{eq:defnpairing}
% \left\{G,F\right\}=\sum\limits_{h\in L'/L}\sum\limits_{ n\in\Q} a^{+}(n,h)b(- n,h).
%\end{equation}
%Here $a^{+}(n,h)$ denotes the $(n,h)$-th coefficient of the holomorphic part of $F$ and $b(n,h)$ the $(n,h)$-th coefficient of $G$.

	\section{Theta Functions}\label{sec:thetafunctions}
	
In this section we introduce the theta functions that we will employ as kernel functions for the lifts we investigate in this paper. As before we let $L \subseteq V$ be an even lattice with dual lattice $L'$ and we let $\Gamma$ be a congruence subgroup of $\SL_{2}(\Q)$ which maps $L$ to itself acts trivially on $L'/L$.

For $z = x+iy \in \mathbb{H}$ the vectors
\begin{align*}
	X_{1}(z) &= \frac{1}{\sqrt{2N}y}\begin{pmatrix}-x & x^{2} + y^{2} \\ -1 & x \end{pmatrix},\\
	X_{2}(z) &= \frac{1}{\sqrt{2N}y}\begin{pmatrix}x & -x^{2} + y^{2} \\ 1 & -x \end{pmatrix}, \\
	X_{3}(z) &= \frac{1}{\sqrt{2N}y}\begin{pmatrix}y & -2xy \\ 0 & -y \end{pmatrix},
\end{align*}
form an orthogonal basis of $V(\R)$ with \[(
X_{1}(z),X_{1}(z)) = 1\quad\text{and} \quad(X_{2}(z),X_{2}(z)) = (X_{3}(z),X_{3}(z)) = -1.
\] For $z \in \h$ and $X = \left( \begin{smallmatrix}x_{2} & x_{1} \\ x_{3} & -x_{2}\end{smallmatrix}\right) \in V(\R)$ we define the quantities
\begin{align*}
p_{z}(X) &= \sqrt{2}(X,X_{1}(z)) = -\frac{\sqrt{N}}{y}(x_{3}|z|^{2}-2x_{2}x -x_{1}), \\
Q_{X}(z) &= \sqrt{2N}y(X,X_{2}(z)+iX_{3}(z)) = N(x_{3}z^{2}-2x_{2}z-x_{1}).
\end{align*}
Further, we let
\[
 R(X,z)= \frac{1}{2}p^{2}_{z}(X)-(X,X).
\]
Note that $R(X,z)$ is non-negative and equals $0$ if and only if $X \in \R X_{1}(z)$. For $\gamma \in \SL_{2}(\R)$ we have
\begin{align}\label{zTransformationRules}
p_{\gamma z}(X) = p_{z}(\gamma^{-1}X), \quad Q_{X}(\gamma z) = j(\gamma,z)^{-2}Q_{\gamma^{-1}X}(z), \quad R(X,\gamma z) = R(\gamma^{-1}X,z),
\end{align}
which can be verified by a direct calculation.

\begin{example}
For the lattice $L$ as in Example~\ref{ex:lattice} and $X = \big(\begin{smallmatrix}b/2N & -a/N \\ c & -b/2N \end{smallmatrix} \big) \in L'$ we have $Q_{X}(z) = cNz^{2}-bz+a$ and $-y\sqrt{N}p_{z}(X) = cN|z|^{2}-bx+a$, i.e.~these quantities are related to CM-points and geodesics associated to the quadratic form $[cN,-b,a]$.
\end{example}

For $\tau = u + iv,z = x+iy \in \mathbb{H}$ and $k \in \Z_{\geq 0}$ we define Schwartz functions on $V(\R)$ by
\begin{align*}
\psi_{M,k}(X,\tau,z) &= v^{k+1}p_{z}(X)Q^{k}_{X}(\bar{z})e^{-2\pi vR(X,z)}e^{2\pi i Q(X)\tau}, \\
\varphi_{KM}(X,\tau,z) &= \left(vp_{z}^{2}(X) - \frac{1}{2\pi} \right)e^{-2\pi vR(X,z)}e^{2\pi i Q(X)\tau},\\
\varphi_{Sh,k}(X,\tau,z) &= v^{1/2}y^{-2k-2}Q^{k+1}_{X}(\bar{z})e^{-2\pi v R(X,z)}e^{2\pi i Q(X)\tau},
\end{align*}
which we call the \emph{Millson}, the \emph{Kudla-Millson} and the \emph{Shintani Schwartz function}, respectively. They have been studied in many recent works, for example \cite{brfu04,brfu06, hoevel,bif,crawford}.
%This function was recently studied extensively by H\"ovel in his PhD thesis \cite{hoevel}. Kudla and Millson were the first that considered this function \cite{kumi90}. 
To each Schwartz function $\varphi(X,\tau,z)$ on $V(\R)$ of this shape we associate a theta function
\[
\Theta(\tau,z,\varphi) =  \sum_{h \in L'/L}\sum_{X \in h+L}\varphi(X,\tau,z)\e_{h},
\] 
which defines a smooth $\C[L'/L]$-valued function in $\tau$ and $z$. 
%Its components will be denoted by $\Theta_{h}(\tau,z,\varphi)$ for $h \in L'/L$.
We summarize the transformation properties of the theta functions associated to our special Schwartz functions.

\begin{proposition}\label{prop:propertiestheta}
	Let $k \in \Z$, $k \geq 0$.
	\begin{enumerate}
		\item The Millson theta function $\Theta(\tau,z,\psi_{M,k})$ has weight $1/2-k$ in $\tau$ for the representation $\rho_{L}$ and $\overline{\Theta(\tau,z,\psi_{M,k})}$ has weight $-2k$ in $z$ for $\Gamma$.
		\item The Kudla-Millson theta function $\Theta(\tau,z,\varphi_{KM})$ has weight $3/2$ in $\tau$ for the representation $\rho_{L}$ and is $\Gamma$-invariant in $z$.
		\item The Shintani theta function $\overline{\Theta(\tau,z,\varphi_{Sh,k})}$ has weight $k+3/2$ in $\tau$ for the representation $\overline{\rho}_{L}$ and $\Theta(\tau,z,\varphi_{Sh,k})$ has weight $2k+2$ in $z$ for $\Gamma$.
	\end{enumerate}
\end{proposition}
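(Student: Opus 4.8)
The plan is to treat the two transformation laws in each part separately, since the behaviour in $z$ (under $\Gamma$) and the behaviour in $\tau$ (under $\Mp_2(\Z)$) are governed by completely different mechanisms. The $z$-transformation is elementary and rests entirely on the transformation rules \eqref{zTransformationRules}, whereas the $\tau$-transformation is the substantial part and requires the Fourier-analytic theory of theta functions attached to the Weil representation as developed in \cite{brfu04} and used for the $k=0$ analogues in \cite{brfu06,hoevel,bif,crawford}.

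For the $z$-transformation I would argue as follows, taking the Millson theta function as the model case. Fix $\gamma \in \Gamma$. Using \eqref{zTransformationRules}, the identity $\overline{\gamma z} = \gamma \bar z$ together with $j(\gamma,\bar z) = \overline{j(\gamma,z)}$, and the fact that $\SL_2$ acts by isometries so that $Q(\gamma^{-1}X) = Q(X)$, a direct substitution gives
\[
\psi_{M,k}(X,\tau,\gamma z) = \overline{j(\gamma,z)}^{-2k}\,\psi_{M,k}(\gamma^{-1}X,\tau,z).
\]
Because $\Gamma$ preserves $L$ and acts trivially on $L'/L$, the map $X \mapsto \gamma^{-1}X$ permutes each coset $h+L$, so after reindexing the inner sum one obtains $\Theta(\tau,\gamma z,\psi_{M,k}) = \overline{j(\gamma,z)}^{-2k}\Theta(\tau,z,\psi_{M,k})$; conjugating yields weight $-2k$ in $z$ for $\overline{\Theta(\tau,z,\psi_{M,k})}$. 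The Kudla--Millson case is identical but carries no automorphy factor (the constant $-\tfrac{1}{2\pi}$ is invariant and $p_z^2$ produces no $j$-factor), giving $\Gamma$-invariance; for the Shintani function the factor $y^{-2k-2}$ transforms by $|j(\gamma,z)|^{2(2k+2)}$, of which $\overline{j}^{2(k+1)}$ cancels the conjugate automorphy factor coming from $Q_X^{k+1}(\overline{\gamma z})$, leaving the holomorphic weight $2k+2$.

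The $\tau$-transformation is where the real work lies. The first step is to rewrite the common factor $e^{2\pi i Q(X)\tau}e^{-2\pi v R(X,z)}$ as a standard majorant Gaussian: using $R(X,z) = \tfrac12 p_z^2(X) - (X,X)$ one checks that this factor equals $e^{2\pi i u Q(X)}$ times the Gaussian $e^{-\pi v(a_1^2+a_2^2+a_3^2)}$ in the positive definite majorant associated to $z$. The second step is to recognize the polynomial prefactors as homogeneous harmonic polynomials of a definite bidegree with respect to the orthogonal splitting $V(\R) = z \oplus z^{\perp}$: in the basis $X_1(z),X_2(z),X_3(z)$ one has $p_z(X) \propto a_1$ (bidegree $(1,0)$, harmonic since linear), while $Q_X(z)$ and $Q_X(\bar z)$ are proportional to the isotropic linear forms $a_2 \mp i a_3$ on the negative definite plane, so $Q_X(\bar z)^k$ and $Q_X(\bar z)^{k+1}$ are harmonic of bidegree $(0,k)$ and $(0,k+1)$, and products of harmonic polynomials in complementary variables remain harmonic. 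One then invokes the general transformation formula for theta functions with harmonic polynomial coefficients, reducing to the generators of $\Mp_2(\Z)$: invariance under $T=(\smallTmatrix,1)$ is immediate from the evenness of $L$, each coset contributing $e(Q(h))$ in agreement with $\rho_L(T)\e_h$, and invariance under $S=(\smallSmatrix,\sqrt{\tau})$ follows from Poisson summation, where harmonicity makes the polynomial an eigenfunction of the partial Fourier transform and its bidegree dictates the resulting weight.

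I expect the $S$-transformation via Poisson summation to be the main obstacle, and within it the bookkeeping that produces exactly the claimed weights. The weight in $\tau$ comes out as $\tfrac{b^+-b^-}{2}$ plus the holomorphic degree minus the anti-holomorphic degree, which for signature $(1,2)$ gives $-\tfrac12+2=\tfrac32$ for Kudla--Millson, $-\tfrac12+1-k=\tfrac12-k$ for Millson (the conjugated factor $Q_X(\bar z)^k$ contributing $-k$), and $-\tfrac12-(k+1)=-k-\tfrac32$ for the Shintani function, i.e.\ weight $k+\tfrac32$ for the conjugate $\overline{\Theta(\tau,z,\varphi_{Sh,k})}$. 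The delicate point is to verify that the prescribed powers of $v$, namely $v^{k+1}$ for $\psi_{M,k}$ and $v^{1/2}$ for $\varphi_{Sh,k}$, are precisely those needed for the Fourier transform of the full Schwartz function to reproduce it at $-1/\tau$ with automorphy factor $\phi(\tau)^{2w}$ and with $\rho_L$ (resp.\ $\overline{\rho}_L$) appearing correctly through the $S$-formula, so that the $v$-powers, the Gaussian Fourier-transform constants, and the eigenvalue $i^{m^--m^+}$ of the harmonic polynomial combine into the single clean automorphy factor of the asserted weight.
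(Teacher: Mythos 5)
Your proposal is correct and follows essentially the same route as the paper: the $z$-transformation is read off from the rules \eqref{zTransformationRules}, and the $\tau$-transformation is obtained by identifying the polynomial prefactors as homogeneous polynomials of bidegree $(1,k)$, $(2,0)$, $(0,k+1)$ on the standard space $\R^{1,2}$ and invoking the general transformation theory of such theta functions (the paper simply cites Theorem 4.1 of \cite{borcherds} rather than redoing the Poisson summation). One small caveat: your framework of \emph{harmonic} polynomial coefficients does not literally cover the Kudla--Millson case, since $p_z^2$ is not harmonic; there one needs the general setup of \cite{borcherds}, in which the theta function is built from $\exp(-\Delta/8\pi v)$ applied to a homogeneous polynomial, and the constant $-\tfrac{1}{2\pi}$ in $\varphi_{KM}$ is exactly that non-harmonicity correction, so the weight is still governed by the bidegree $(2,0)$ as you computed.
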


\begin{proof}
	The transformation behaviour of the three theta functions is certainly well known and can be found in the literature above. For convenience, we sketch the proof:

	The behaviour in $z$ easily follows from the rules (\ref{zTransformationRules}), and the behaviour in $\tau$ can be determined using general results from \cite{borcherds}: Let $\R^{1,2}=(\R^{3},(x,x) = x_{1}^{2}-x_{2}^{2}-x_{3}^{2})$ be the standard quadratic space of signature $(1,2)$. Under the isometry $V(\R) \cong \R^{1,2}$ given by $\sum_{i=1}^{3}\alpha_{i}X_{i}(z) \mapsto (\alpha_{1},\alpha_{2},\alpha_{3})$ the functions $p_{z}(X)$ and $Q_{X}(\bar{z})$ correspond to the polynomials
	\[
	p_{z}(x_{1},x_{2},x_{3}) = \sqrt{2}x_{1}, \qquad Q_{(x_{1},x_{2},x_{3})}(\bar{z}) = -\sqrt{2N}y(x_{2}-ix_{3}),
	\]
	on $\R^{1,2}$. They are homogeneous of degree $(1,0)$ and $(0,1)$, respectively.
%	, and $Q^{k+1}_{(x_{1},x_{2},x_{3})}(\bar{z})$ and $p_{z}(x_{1},x_{2},x_{3})Q_{(x_{1},x_{2},x_{3})}^{k}(\bar{z})$ are harmonic for each $k \in \Z, k \geq 0$. 
	Further, the Millson, Kudla-Millson and Shintani theta functions are (up to some powers of $v$) the theta functions associated to the lattice $L$ and the polynomials $p_{z}(X)Q_{X}^{k}(\bar{z}), p_{z}^{2}(X)$ and $y^{-2k-2}Q_{X}^{k+1}(\bar{z})$ as in \cite{borcherds}, Section 4. The transformation behaviour in $\tau$ now follows from Theorem 4.1. in \cite{borcherds}. 
\end{proof}

We want to investigate the growth of the theta functions at the cusps of $\Gamma$. To describe this in a convenient way, we follow the ideas of \cite[Section 2.2]{bif} and define certain theta functions associated to the cusps. 

For an isotropic line $\ell \in \Iso(V)$ the space $W_{\ell} = \ell^{\perp}/\ell$ is a unary negative definite quadratic space with the quadratic form $Q(X + \ell) := Q(X)$, and
\[
K_{\ell} = (L\cap \ell^{\perp})/(L \cap \ell)
\]
is an even lattice with dual lattice
\[
K'_{\ell} = (L' \cap \ell^{\perp})/(L' \cap \ell).
\]
The vector $X_{\ell}=\sigma_{\ell}.X_{3}(i)$ is a basis of $W_{\ell}$ with $(X_{\ell},X_{\ell}) = -1$, and for $k \in \Z_{\geq 0}$ the polynomial $p_{\ell,k}(X) = (-\sqrt{2N}i(X,X_{\ell}))^{k}$ is homogeneous of degree $(0,k)$. We let $\Theta_{\ell,k}(\tau)$ be the theta function associated to $K_{\ell}$ and $p_{\ell,k}$ as in \cite{borcherds}, Section 4. By \cite[Theorem 4.1.]{borcherds} the complex conjugate $\overline{\Theta_{\ell,k}(\tau)}$ is a holomorphic modular form of weight $k+1/2$ for the dual Weil representation of $K_{\ell}$. Using \cite[Lemma 5.6.]{brhabil}, it gives rise to a holomorphic modular form of weight $k+1/2$ for the dual Weil representation $\overline{\rho}_{L}$ of $L$, which we also denote by $\overline{\Theta_{\ell,k}(\tau)}$. It is a cusp form if $k > 0$.

%We want to make this construction more explicit for $k = 0$. Since $p_{\ell,0}$ is harmonic, the theta function $\overline{\Theta}_{\ell} = \overline{\Theta}_{\ell,0}$ is given by
%\[
%\overline{\Theta}_{\ell}(\tau) = \sum_{h \in L'\cap\ell^{\perp}/L\cap\ell^{\perp}}\sum_{X \in \bar{h}+K_{\ell}}(X,X_{\ell})e(-\tau Q(X))\e_{h}.
%\]
%It is a cusp form of weight $3/2$ for $\overline{\rho}$. Here $\bar{h}$ denotes the image of $h$ under the natural map $L'\cap \ell^{\perp}/L\cap \ell^{\perp} \to K'_{\ell}/K_{\ell}$.

%For $m > 0$ the $m$-th Fourier coefficient of $\overline{\Theta}_{\ell}$  is zero unless there is some $X \in L+h \cap \ell^{\perp}$ with $Q(X) = -m$. Then $m/N$ is a square and $X^{\perp}$ is split over $\Q$. In this case, the $(m,h)$-th Fourier coefficient  of $\overline{\Theta}_{\ell}$ is $0$ if $h = -h \mod L$, and $-\sqrt{2m}$ or $\sqrt{2m}$ according to whether $X \sim \ell$ or $-X \sim \ell$ if $h \neq -h \mod L$.
%
%We can now estimate the growth of our theta functions at the cusps of $\Gamma$.

\begin{proposition}\label{prop:growththeta}
  Let $\ell$ be a cusp of $\Gamma$.
  \begin{enumerate}
  \item For the Millson theta function we have
 \begin{align*}
 \Theta(\tau,\sigma_\ell z,\psi_{M,k})&= O(e^{-Cy^2}),
  \end{align*}
  if $k = 0$, and
   \begin{align*}
 j(\sigma_{\ell},\bar{z})^{2k}\Theta(\tau,\sigma_\ell z,\psi_{M,k})&=
  -y^{k+1}\frac{k}{2\pi\beta_{\ell}}v^{k-1/2}\Theta_{\ell,k-1}(\tau) + O(e^{-Cy^{2}}),
  \end{align*}
 if $k > 0$, as $y \rightarrow\infty$, uniformly in $x$, for some constant $C>0$.
  \item For the Kudla-Millson theta function we have
  \begin{align*}
  \Theta(\tau,\sigma_{\ell}z,\varphi_{KM}) &= O(e^{-Cy^{2}}),
  \end{align*}
  as $y \rightarrow\infty$, uniformly in $x$, for some constant $C>0$.
  \item For the Shintani theta function we have
  \begin{align*}
  j(\sigma_{\ell},z)^{-2k-2}\Theta(\tau,\sigma_{\ell}z,\varphi_{Sh,k}) &= y^{-k}\frac{1}{\sqrt{N}\beta_{\ell}}\Theta_{\ell,k+1}(\tau) + O(e^{-Cy^{2}}),
 \end{align*}
 as $y \rightarrow\infty$, uniformly in $x$, for some constant $C>0$.
 \end{enumerate}
 Moreover, all of the partial derivates of the functions hidden in the $O$-notation are square exponentially decreasing as $y \to \infty$.
\end{proposition}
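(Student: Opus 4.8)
The plan is to reduce the estimate at a general cusp $\ell$ to the standard cusp $\ell_\infty$ and there to split the lattice sum according to the ``transverse'' coordinate of $X$. Using the scaling matrix $\sigma_\ell$ together with the equivariance $gX_i(z)=X_i(gz)$ and the transformation rules \eqref{zTransformationRules} for $p_z$, $Q_X$ and $R$, the reindexing $X\mapsto\sigma_\ell^{-1}X$ replaces $L$ by $\sigma_\ell^{-1}L$ and $\ell$ by $\ell_\infty$; the automorphy factors $j(\sigma_\ell,\bar z)^{2k}$ and $j(\sigma_\ell,z)^{-2k-2}$ on the left-hand sides are exactly what is produced by the weight in $z$ recorded in Proposition~\ref{prop:propertiestheta}, so after stripping them off it suffices to analyze $\Theta(\tau,z,\varphi)$ for the conjugated lattice as $y=\Imm(z)\to\infty$, uniformly for $x=\Ree(z)$ in a fundamental interval of width $\alpha_\ell$. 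Writing $X=\smallm{x_2}{x_1}{x_3}{-x_2}$, the line $\ell_\infty$ is the $x_1$-axis and $\ell_\infty^\perp=\{x_3=0\}$, so I would split the sum over $X\in h+L$ into the part with $X\in\ell_\infty^\perp$ and the complementary part.

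For the complementary part a direct computation gives $(X,X_2(z))=-\tfrac{\sqrt N}{\sqrt2\,y}(2x_2x+x_1-x_3x^2+x_3y^2)$, so that the negative-definite coordinate of $X$ grows like $x_3y$ and hence $R(X,z)\ge c\,x_3^2y^2$ for $y$ large. Consequently every term with $x_3\neq0$, together with all of its $\tau$- and $z$-derivatives, is $O(e^{-Cy^2})$ uniformly in $x$, and summing over the finitely many classes and the lattice preserves this bound. This already settles the Kudla--Millson case and the Millson case $k=0$ once the $\ell_\infty^\perp$-part is shown to contribute nothing of polynomial size, which I treat next.

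For the main part $X\in\ell_\infty^\perp$ I would sum first over the isotropic direction. Here $Q(X)=-Nx_2^2$, $p_z(X)=\tfrac{\sqrt N}{y}w$, $Q_X(\bar z)=-Nw+2iNx_2y$ and $R(X,z)=\tfrac{N}{2y^2}w^2+2Nx_2^2$ with $w=2x_2x+x_1$, and $x_1$ runs through a coset of $\beta_\ell\Z$ since $\smalllatmat$-type primitive isotropic vector of $\ell_\infty\cap L$ has length parameter $\beta_\ell$. Applying Poisson summation in $w$, the nonzero frequencies are $O(e^{-Cy^2})$ because the Gaussian $e^{-\pi vNw^2/y^2}$ has width $\sim y$, while the zero frequency gives $\tfrac1{\beta_\ell}\int_\R(\cdots)\,dw$. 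For $\varphi_{KM}$ this integral is $\tfrac1{\beta_\ell}\int_\R\bigl(vp_z^2(X)-\tfrac1{2\pi}\bigr)e^{-\pi vNw^2/y^2}\,dw=0$, the constant $-\tfrac1{2\pi}$ being precisely what cancels the Gaussian moment of $vp_z^2$, giving the pure decay in (2). For $\psi_{M,k}$ and $\varphi_{Sh,k}$ I would expand $Q_X^{k}(\bar z)$, resp.\ $Q_X^{k+1}(\bar z)$, binomially in $w$; since $p_z\propto w$ is odd, the lowest surviving Millson term pairs $p_z$ with the linear part $\binom{k}{1}(-Nw)(2iNx_2y)^{k-1}$ of $Q_X^k$, whereas for Shintani the constant part $(2iNx_2y)^{k+1}$ survives. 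Using $\int_\R w^2e^{-aw^2}\,dw=\tfrac{\sqrt\pi}{2}a^{-3/2}$ and $\int_\R e^{-aw^2}\,dw=\sqrt{\pi/a}$ with $a=\pi vN/y^2$, and reassembling the $x_2$-sums via $(X,X_{\ell_\infty})=-\sqrt{2N}x_2$ and $p_{\ell,m}(X)=(2iNx_2)^m$ into the unary theta functions of $K_\ell$, a short computation of the powers of $y$ and $v$ yields exactly $-y^{k+1}\tfrac{k}{2\pi\beta_\ell}v^{k-1/2}\Theta_{\ell,k-1}(\tau)$ and $y^{-k}\tfrac1{\sqrt N\beta_\ell}\Theta_{\ell,k+1}(\tau)$.

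The main obstacle is the bookkeeping in this final step. One must expand the polynomials, evaluate the Gaussian moments, and match both the numerical constant and the exact index of $\Theta_{\ell,\cdot}$, while keeping the conjugation by $\sigma_\ell$ and the passage from $L$ to $K_\ell$ (via \cite[Lemma 5.6]{brhabil}) under control; the cleanest way to isolate the single theta function is either to retain only the top-order Gaussian moment or, more structurally, to invoke the homogeneity of $p_zQ_X^k$ together with Borcherds' cusp expansion \cite[Section 4]{borcherds} of Siegel theta functions of homogeneous polynomials. Separately, the uniformity in $x$ and the assertion that all partial derivatives of the $O$-terms are square-exponentially decreasing require differentiating the $x_3\neq0$ contributions and the Poisson error terms under the sum and re-estimating them; since each differentiation in $\tau$ or $z$ only produces polynomial factors in $y$ against $e^{-Cy^2}$, the bounds persist, but this must be verified uniformly.
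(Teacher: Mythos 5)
Your reduction to the cusp at infinity via $\sigma_\ell$ and the transformation rules \eqref{zTransformationRules} is exactly the paper's first (and essentially only explicit) step; from there the paper simply cites Theorem 5.2 of \cite{borcherds} applied to $\sigma_\ell^{-1}L$ and the primitive isotropic vector $\smallm{0}{\beta_\ell}{0}{0}$. You instead re-derive that theorem in this special case: split the lattice sum at $\ell_\infty^\perp=\{x_3=0\}$, kill the transverse part by square-exponential decay, and apply Poisson summation in the isotropic direction. This is a legitimate, more self-contained route (it is in effect the proof of Borcherds' theorem here), and your identification of the surviving Gaussian moments — the cancellation for $\varphi_{KM}$, the oddness of $p_z$ for $\psi_{M,0}$, the constant term $(2iNx_2y)^{k+1}$ for Shintani and the linear term for Millson — does reproduce the stated main terms with the correct constants.

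Two steps need repair, however. First, the bound $R(X,z)\ge c\,x_3^2y^2$ for $x_3\neq0$ is false as stated: lattice vectors with $x_1\approx -x_3y^2$ and $x_2\approx x_3x$ make both negative-definite coordinates small. What saves the argument is the modulus of the factor $e^{2\pi iQ(X)\tau}$: since $R(X,z)+Q(X)=\tfrac12(\alpha_1^2+\alpha_2^2+\alpha_3^2)$ is half the Gaussian majorant and $\alpha_1-\alpha_2=-\sqrt{2N}\,x_3y$, one gets $e^{-2\pi vR(X,z)}\,\bigl|e^{2\pi iQ(X)\tau}\bigr|\le e^{-\pi vNx_3^2y^2}$, uniformly over the lattice; you must use this combined exponential, not $e^{-2\pi vR}$ alone. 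Second, and more substantively, ``retaining only the top-order Gaussian moment'' does not suffice for larger $k$: in the Millson case \emph{every} odd $j\le k$ in the binomial expansion of $Q_X^k(\bar z)$ contributes at the same order $y^{k+1}$ (with $v$-power $v^{k-j/2}$ and an $x_2^{k-j}$-weighted unary theta series), and likewise every even $j$ contributes at order $y^{-k}$ in the Shintani case. These sub-leading moments are not $O(e^{-Cy^2})$; they are precisely the $\exp(-\Delta/(8\pi v))$-corrections that Borcherds builds into the theta function attached to the polynomial $p_{\ell,k-1}$ (which is non-harmonic once its degree exceeds one), and that is how the single symbol $\Theta_{\ell,k-1}$ in the statement absorbs them. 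So to complete your route you must either carry all the even moments and check that they reassemble into $\Theta_{\ell,k-1}$ as defined via \cite{borcherds}, Section 4, or fall back on citing Theorem 5.2 of \cite{borcherds} directly, as the paper does.
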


\begin{proof}
	Using the rules (\ref{zTransformationRules}) we can write
	\[
	j(\sigma_{\ell},\bar{z})^{2k}\Theta(\tau,\sigma_\ell z,\psi_{M,k}) = \sum_{h \in (\sigma_{\ell}^{-1}L)'/(\sigma_{\ell}^{-1}L)}\sum_{X \in h+(\sigma_{\ell}^{-1}L)}\psi_{M,k}(X,\tau,z)\e_{h},
	\]
	and similarly for the other two theta functions, so we can equivalently estimate the growth of the theta functions for the lattice $\sigma_{\ell}^{-1}L$ at the cusp $\infty$. The result now follows from Theorem 5.2 in \cite{borcherds} applied to the lattice $\sigma_{\ell}^{-1}L$ and the primitive isotropic vector $\left(\begin{smallmatrix}0 & \beta_{\ell} \\ 0 & 0\end{smallmatrix}\right) \in  \ell_{\infty}\cap\sigma_{\ell}^{-1}L$.
\end{proof}

%H\"ovel investigated the behavior of $\ThetaL{\phim}$ under the operation of the Atkin-Lehner involutions.
%\begin{proposition}[Proposition 2.7 in \cite{hoevel}]\label{prop:equithetam}
% Let $Q$ be an exact divisor of $N$. Then we have
% \[
%  \Theta_{\Delta,r}(\tau,W_Q^N z,\phim)=\sum_{h\in L'/L}\Theta_{\Delta,r,W_Q^N. h}(\tau,z,\phim)\mathfrak{e}_h.
% \]
%\end{proposition}

%By $\ell$ and $\ell'$ we denote the primitive isotropic vectors 
%\[
% \ell=\begin{pmatrix}
%       0&1/N\\0&0
%      \end{pmatrix},\quad\quad \ell'=\begin{pmatrix} 0&0\\-1&0\end{pmatrix}
%\]
%in $L$. We write $K$ for the $1$-dimensional lattice $\Z\left(\begin{smallmatrix}1&0\\0&-1\end{smallmatrix}\right)\subset L$. We have $L=K+\Z\ell+\Z\ell'$ and $L'/L\simeq K'/K$. Then we can rewrite the Millson theta function in terms of the smaller lattice $K$. The following is Satz 2.22 in \cite{hoevel}.

%SPAETER?
%
%\begin{Proposition}\label{prop:Kmillsontheta}
%Let $\epsilon=1$ if $\Delta>0$ and $\epsilon=i$ if $\Delta<0$. We have
%\begin{align*}
% & \ThetaL{\phim}= -\frac{N y^2 \bar{\epsilon}}{2i }\sum_{n=1}^\infty n\left(\frac{\Delta}{n}\right)
%\\
%&\quad\times \sum_{\gamma\in \widetilde{\G}_\infty\setminus\Mp_2(\Z)} \left[ \frac{1}{v^{1/2}} e\left(-\frac{Nn^2y^2}{2i\abs{\Delta}v}\right)\sum_{\lambda \in K'}e\left(\frac{\lambda^2}{2}\abs{\Delta}\bar{\tau}-2nN\lambda x\right)\mathfrak{e}_{r\lambda} \right]\Bigg|_{1/2,\widetilde{\rho_K}}\gamma.
%\end{align*}
%\end{Proposition}

%\subsection{Differential equations for theta functions}
%
The theta functions we just defined satisfy some interesting differential equations. All of the following identities can be checked on the level of Schwartz functions by a direct computation using the rules
\begin{align}\label{eq:diffpQR}
&\frac{\partial}{\partial z} y^{-2}Q_{X}(z)=-i\sqrt{N}y^{-2}p_z(X),  &\frac{\partial}{\partial z} p_{z}(X) =-\frac{i}{2\sqrt{N}}y^{-2}Q_{X}(\bar{z}), \\
&\notag \frac{\partial}{\partial z} R(X,z) =-\frac{i}{2\sqrt{N}}y^{-2}p_{z}(X)Q_{X}(\bar{z}), &y^{-2}Q_{X}(z)Q_{X}(\bar{z}) = 2NR(X,z).
\end{align}

\begin{lemma}\label{lm:iddelta}
	For $k \geq 0,$ we have
\begin{align*}
 \Delta_{1/2-k,\tau}\Theta(\tau,z,\psi_{M,k}) &= \frac{1}{4} \overline{\Delta_{-2k,z}\overline{\Theta(\tau,z,\psi_{M,k})}},\\
 \Delta_{3/2,\tau}\Theta(\tau,z,\varphi_{KM}) &= \frac{1}{4} \Delta_{0,z}\Theta(\tau,z,\varphi_{KM}), \\
 \overline{\Delta_{k+3/2,\tau}\overline{\Theta(\tau,z,\varphi_{Sh,k})}} &= \frac{1}{4}\Delta_{2k+2,z}\Theta(\tau,z,\varphi_{Sh,k}).
\end{align*}
\end{lemma}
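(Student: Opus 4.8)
The plan is to prove all three identities \emph{termwise at the level of the Schwartz functions}. Both hyperbolic Laplacians act by differentiation, and the defining theta series converge locally uniformly together with all their derivatives (each summand decays like a Gaussian in $X$ through the factor $e^{-2\pi vR(X,z)}$), so $\Delta_{\kappa,\tau}$, $\Delta_{\lambda,z}$ and complex conjugation all pass through the sum $\sum_{X\in h+L}$. Hence it suffices to prove, for each fixed $X$, the corresponding identity with $\psi_{M,k}(X,\tau,z)$, $\varphi_{KM}(X,\tau,z)$ or $\varphi_{Sh,k}(X,\tau,z)$ in place of the respective theta function. The conjugations appearing on the two sides of each identity are exactly those forced by the weight assignments recorded in Proposition~\ref{prop:propertiestheta}.

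First I would dispose of the $\tau$-derivatives, which are the easy half. Each Schwartz function has the shape $v^{a}\,P(X,z)\,e^{-2\pi vR(X,z)}e^{2\pi iQ(X)\tau}$, where neither the prefactor $P$ (a monomial in $p_z(X)$ and $Q_X(\bar z)$, possibly times a power of $y$) nor $R(X,z)$ nor $Q(X)$ depends on $\tau$. Writing $e^{2\pi iQ(X)\tau}=e^{2\pi iQ(X)u}e^{-2\pi Q(X)v}$ and setting $c=R(X,z)+Q(X)$, the function becomes $v^{a}P\,e^{2\pi iQ(X)u}e^{-2\pi cv}$; thus $\partial_u$ acts as multiplication by $2\pi iQ(X)$ while $\partial_v$ produces only the constant $-2\pi c$ and powers of $v^{-1}$. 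Substituting into $\Delta_{\kappa,\tau}=-v^{2}(\partial_u^{2}+\partial_v^{2})+i\kappa v(\partial_u+i\partial_v)$ yields an explicit polynomial expression in $v$, $Q(X)$ and $R(X,z)$.

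The substance is in the $z$-derivatives. Writing $\Delta_{\lambda,z}=-4y^{2}\partial_z\partial_{\bar z}+2i\lambda y\,\partial_{\bar z}$, I would differentiate the appropriate $z$-conjugate of the Schwartz function using the product rule together with the rules \eqref{eq:diffpQR} and their complex conjugates, which furnish the $\partial_{\bar z}$-analogues once one notes that $p_z(X)$ and $R(X,z)$ are real and $\overline{Q_X(\bar z)}=Q_X(z)$. The factor $e^{-2\pi vR}$ then contributes terms proportional to $v\,\partial_z\partial_{\bar z}R$ and to $v^{2}(\partial_zR)(\partial_{\bar z}R)$, and the crucial simplification is the identity $(\partial_zR)(\partial_{\bar z}R)=\tfrac12 y^{-2}p_z^{2}(X)\,R(X,z)$, which follows from \eqref{eq:diffpQR} and the relation $y^{-2}Q_X(z)Q_X(\bar z)=2NR(X,z)$. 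Pushing the prefactors through the product rule and repeatedly collapsing the resulting monomials in $p_z(X)$ and $Q_X(\bar z)$ into powers of $R$ by the same relation again produces a polynomial in $v$, $Q(X)$ and $R(X,z)$; comparing it with the $\tau$-side yields the constant $\tfrac14$ in each case.

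The main obstacle is exactly this last bookkeeping in the Millson and Shintani cases, where the prefactors $p_zQ_X^{k}$ and $y^{-2k-2}Q_X^{k+1}$ carry $k$-dependent powers: the product rule generates several terms with $k$-dependent coefficients, and each mixed monomial in $p_z$ and $Q_X$ must be rewritten as a polynomial in $R(X,z)$ and $Q(X)$, using $y^{-2}Q_X(z)Q_X(\bar z)=2NR$ together with $R=\tfrac12 p_z^{2}-(X,X)$, before the two sides can be matched. A clean way to organize the argument is to establish once and for all a single formula for $\Delta_{\kappa,\tau}$ and for $\Delta_{\lambda,z}$ applied to a generic function $v^{a}y^{b}p_z^{m}Q_X(\bar z)^{n}e^{-2\pi vR}e^{2\pi iQ(X)\tau}$, and then specialize $(a,b,m,n,\kappa,\lambda)$ to the three cases; the Kudla--Millson identity then drops out as the $k=0$ instance, up to the harmless additive constant $-\tfrac{1}{2\pi}$ in $\varphi_{KM}$.
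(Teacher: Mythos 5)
Your plan is sound and is essentially the argument the paper delegates to the literature: the paper's own ``proof'' of Lemma~\ref{lm:iddelta} is a bare citation of H\"ovel and of Bruinier's Habilitation, and those sources do exactly what you propose, namely a termwise verification on Schwartz functions using the differentiation rules \eqref{eq:diffpQR}. Your justification of termwise differentiation is correct (the exponent $-2\pi v(R(X,z)+Q(X))$ is $-2\pi v$ times the positive definite majorant, so all derivatives of the series converge locally uniformly), the reduction of the $\tau$-side to a polynomial in $v$, $Q(X)$, $R(X,z)$ is right, and you have correctly identified the one nontrivial simplification on the $z$-side, $(\partial_z R)(\partial_{\bar z}R)=\tfrac12 y^{-2}p_z^2(X)R(X,z)$, which follows from \eqref{eq:diffpQR} together with $\overline{Q_X(\bar z)}=Q_X(z)$ and $y^{-2}Q_X(z)Q_X(\bar z)=2NR(X,z)$. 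The one place where your wording undersells the work is the Kudla--Millson case: $\varphi_{KM}$ is not the $k=0$ member of either family but the specific linear combination $(v\,p_z^2-\tfrac{1}{2\pi})e^{-2\pi vR}e^{2\pi iQ(X)\tau}$, i.e.\ two instances of your generic template with $(a,m)=(1,2)$ and $(0,0)$, and the constant $-\tfrac{1}{2\pi}$ is not ``harmless'': the identity $\Delta_{3/2,\tau}\varphi_{KM}=\tfrac14\Delta_{0,z}\varphi_{KM}$ holds only for this tuned combination, not for the two summands separately, so the cross terms must actually be checked to cancel. With that caveat, specializing your generic formula to the three parameter choices completes the proof.
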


\begin{proof}
	Compare \cite[Proposition 3.10]{hoevel}, \cite[Proposition 4.5]{brhabil}.
\end{proof}

The Millson and the Shintani theta function are related by the following identity.

\begin{lemma}\label{lm:relshinmillson}
	For $k \geq 0$ we have
\begin{align*}
\xi_{1/2-k,\tau} \Theta(\tau,z,\psi_{M,k}) = \frac{1}{2\sqrt{N}}\xi_{2k+2,z}\Theta(\tau,z,\varphi_{Sh,k}).
\end{align*}
\end{lemma}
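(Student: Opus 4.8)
The plan is to verify the identity at the level of the Schwartz functions defining the two theta series, i.e.\ to show that for each fixed $X \in V(\R)$ one has
\[
\xi_{1/2-k,\tau}\psi_{M,k}(X,\tau,z) = \frac{1}{2\sqrt N}\,\xi_{2k+2,z}\varphi_{Sh,k}(X,\tau,z).
\]
Since both $\xi$-operators are first-order (anti)linear differential operators and the theta series converge locally uniformly together with all their derivatives (as is used implicitly in Proposition \ref{prop:growththeta}), they may be applied termwise; the complex conjugation built into $\xi$ turns the coefficient field into the one appropriate to the dual representation $\overline\rho_L$ on both sides, so the $\C[L'/L]$-structures match and it suffices to compare scalar summands. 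Throughout I will use the form $\xi_{\kappa,\tau} = 2iv^{\kappa}\,\overline{\partial_{\bar\tau}(\cdot)}$ and $\xi_{\kappa,z} = 2iy^{\kappa}\,\overline{\partial_{\bar z}(\cdot)}$ coming directly from the definition of $\xi_k$ in Section \ref{sec:diffop}.

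The left-hand side is straightforward. Only the factors $v^{k+1}$, $e^{-2\pi v R(X,z)}$ and the holomorphic $e^{2\pi i Q(X)\tau}$ in $\psi_{M,k}$ depend on $\tau$; since $\partial_{\bar\tau}$ annihilates the last factor and $\partial_{\bar\tau}v = \tfrac{i}{2}$, a one-line computation followed by conjugation (which replaces $Q_X(\bar z)$ by $Q_X(z)$ and $e^{2\pi i Q\tau}$ by $e^{-2\pi i Q\bar\tau}$, the remaining factors being real) and multiplication by $2iv^{1/2-k}$ gives
\[
\xi_{1/2-k,\tau}\psi_{M,k}(X,\tau,z) = v^{1/2}\,p_z(X)Q_X^k(z)\bigl[(k+1) - 2\pi v R(X,z)\bigr]e^{-2\pi v R(X,z)}e^{-2\pi i Q(X)\bar\tau}.
\]

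The right-hand side carries the real work, and the step I expect to be the main obstacle is the bookkeeping of $\partial_{\bar z}$ applied to the product of the $z$-dependent factors $y^{-2k-2}$, $Q_X^{k+1}(\bar z)$ and $e^{-2\pi v R(X,z)}$ of $\varphi_{Sh,k}$. The rules in (\ref{eq:diffpQR}) are stated for $\partial_z$, so I would first produce their $\partial_{\bar z}$-versions by conjugation: since $p_z(X)$ and $R(X,z)$ are real and $\overline{Q_X(z)} = Q_X(\bar z)$, conjugating (\ref{eq:diffpQR}) and using $\partial_{\bar z}y = \tfrac{i}{2}$ yields $\partial_{\bar z}R(X,z) = \tfrac{i}{2\sqrt N}y^{-2}p_z(X)Q_X(z)$ and $\partial_{\bar z}Q_X(\bar z) = i\sqrt N\,p_z(X) + i y^{-1}Q_X(\bar z)$. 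Feeding these into the product rule, the term coming from $\partial_{\bar z}y^{-2k-2}$ cancels exactly against the $i y^{-1}Q_X(\bar z)$-part of $\partial_{\bar z}Q_X^{k+1}(\bar z)$, and the surviving exponential term is rewritten using the algebraic identity $y^{-2}Q_X(z)Q_X(\bar z) = 2N R(X,z)$ from (\ref{eq:diffpQR}). The upshot is
\[
\partial_{\bar z}\varphi_{Sh,k}(X,\tau,z) = i\sqrt N\,v^{1/2}y^{-2k-2}p_z(X)Q_X^k(\bar z)\bigl[(k+1) - 2\pi v R(X,z)\bigr]e^{-2\pi v R(X,z)}e^{2\pi i Q(X)\tau}.
\]
Conjugating and multiplying by $2iy^{2k+2}$ then produces exactly $2\sqrt N$ times the right-hand side of the displayed formula for $\xi_{1/2-k,\tau}\psi_{M,k}$ above; dividing by $2\sqrt N$ gives equality of the summands and hence, after summing over $X$ and $h$, the asserted identity of theta functions.
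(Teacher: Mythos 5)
Your computation is correct and is exactly the verification the paper has in mind: it states just before these lemmas that all of them "can be checked on the level of Schwartz functions by a direct computation using the rules (\ref{eq:diffpQR})" and then simply cites \cite{brikavia} and \cite{crawford} for the details. I checked your two displayed formulas for $\xi_{1/2-k,\tau}\psi_{M,k}$ and $\partial_{\bar z}\varphi_{Sh,k}$ (including the cancellation of the $y^{-2k-3}Q_X^{k+1}(\bar z)$ terms and the use of $y^{-2}Q_X(z)Q_X(\bar z)=2NR(X,z)$), and they match, so your argument supplies the computation the paper outsources.
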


\begin{proof}
	Compare \cite[Lemma 3.3]{brikavia} or \cite[Lemma 7.2.1]{crawford}.
\end{proof}

We will also need the following relation between Millson theta functions of different weights and the Millson and Kudla-Millson theta functions:

\begin{lemma}\label{lm:relmillsondiffweight}
	For $k \geq 0$ we have
		\begin{align*}
		L_{-2k-2,z}L_{-2k,z}\overline{L_{1/2-k,\tau}\Theta(\tau,z,\psi_{M,k})} = \frac{\pi}{N}\left(\Delta_{-2k-4,z}-4k-6\right)\overline{\Theta(\tau,z,\psi_{M,k+2})}.
		\end{align*}
	Further, we have
	\begin{align*}
	L_{0,z}\overline{L_{3/2,\tau}\Theta(\tau,z,\varphi_{KM})} = -\frac{1}{2\sqrt{N}}(\Delta_{-2,z}-2)\overline{\Theta(\tau,z,\psi_{M,1})}.
	\end{align*}
\end{lemma}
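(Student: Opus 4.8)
The plan is to check both identities summand by summand on the level of the Schwartz functions. The operators $L_{1/2-k,\tau}$, $L_{3/2,\tau}$, the $z$-lowerings $L_{-2k,z}, L_{-2k-2,z}, L_{0,z}$ and the Laplacians $\Delta_{\ast,z}$ are all linear differential operators acting inside the theta sum $\Theta(\tau,z,\varphi) = \sum_{h\in L'/L}\sum_{X\in h+L}\varphi(X,\tau,z)\e_h$, so it suffices to establish the corresponding pointwise identities for $\psi_{M,k}(X,\tau,z)$ and $\varphi_{KM}(X,\tau,z)$ for each fixed $X\in V(\R)$, with conjugation and the operators applied to the individual terms.

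First I would carry out the $\tau$-lowering. Since $L_{k,\tau}=-2iv^2\partial_{\bar\tau}$ and $\partial_{\bar\tau}$ annihilates the holomorphic factor $e^{2\pi iQ(X)\tau}$, only the polynomial prefactor in $v$ (namely $v^{k+1}$ for $\psi_{M,k}$, and $vp_z^2(X)-\frac{1}{2\pi}$ for $\varphi_{KM}$) and the Gaussian $e^{-2\pi vR(X,z)}$ contribute, through $\partial_{\bar\tau}v=\frac{i}{2}$. This yields an explicit expression of Schwartz type which I then conjugate: conjugation fixes the real quantities $v,y,p_z(X),R(X,z),Q(X)$, replaces $Q_X(\bar z)$ by $Q_X(z)$, and turns $e^{2\pi iQ(X)\tau}$ into $e^{-2\pi iQ(X)\bar\tau}$, so that the result is now holomorphic in $z$ in the factor $Q_X(z)$.

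Next I would apply the $z$-lowerings $L_{-2k,z}=-2iy^2\partial_{\bar z}$ (and then $L_{-2k-2,z}$, resp.\ $L_{0,z}$) using the differentiation rules (\ref{eq:diffpQR}) and their complex conjugates, which express $\partial_z p_z(X)$, $\partial_z Q_X(z)$, $\partial_z R(X,z)$ (and the $\partial_{\bar z}$-versions needed here) in terms of $p_z(X)$ and $Q_X(z),Q_X(\bar z)$, together with the quadratic relation $y^{-2}Q_X(z)Q_X(\bar z)=2NR(X,z)$. Iterating the product rule and collecting terms produces a linear combination of the basic quantities $p_z(X)$, $Q_X(\bar z)$ and $R(X,z)$ times powers of $v$ and $y$ and the common Gaussian. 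For the right-hand side I would compute $\Delta_{-2k-4,z}\overline{\psi_{M,k+2}}$ (resp.\ $\Delta_{-2,z}\overline{\psi_{M,1}}$) either directly from (\ref{eq:diffpQR}) or, more economically, by using the factorization $-\Delta_k=R_{k-2}L_k$ from (\ref{eq:DkLkRk}) to reduce the second-order operator to first-order ones, and then match the two sides monomial by monomial, fixing the constants $\frac{\pi}{N}$, $4k+6$ and $-\frac{1}{2\sqrt N}$, $2$.

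The hard part will be purely the bookkeeping: repeated product-rule differentiation of a product of several $z$- and $\tau$-dependent factors generates a proliferation of terms in $p_z(X)$, $Q_X(\bar z)$ and $R(X,z)$ with assorted powers of $v$ and $y$, and one must systematically use $y^{-2}Q_X(z)Q_X(\bar z)=2NR(X,z)$ to collapse them to the single monomial appearing in $\overline{\psi_{M,k+2}}$ (resp.\ $\overline{\psi_{M,1}}$) plus the Laplacian contribution. Tracking the interaction between conjugation and the holomorphic/antiholomorphic derivatives, and pinning down the exact numerical constants, is the only genuinely delicate point; the structural shape of the identity is forced by the weights, which match on both sides by construction.
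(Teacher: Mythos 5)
Your plan is exactly the paper's proof: the authors simply state that the lemma follows by a direct calculation using the rules (\ref{eq:diffpQR}), which is precisely the termwise Schwartz-function computation you outline (including the correct observations that $\partial_{\bar\tau}$ kills $e^{2\pi i Q(X)\tau}$ and acts on $v^{k+1}$ and the Gaussian via $\partial_{\bar\tau}v=\tfrac{i}{2}$, and that the relation $y^{-2}Q_X(z)Q_X(\bar z)=2NR(X,z)$ is what collapses the terms). The approach is correct and essentially identical to the paper's.
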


\begin{proof}
	This can be shown by a direct calculation using the rules (\ref{eq:diffpQR}).
\end{proof}

	\section{Theta Lifts}\label{sec:thetalifts}

	Let $k \in \Z_{\geq 0}$ and let $F \in H_{-2k}^{+}(\Gamma)$ be a harmonic weak Maass form. We would like to integrate $F$ against the Millson theta function $\Theta(\tau,z,\psi_{M,k})$ on $M = \Gamma \setminus \h$ to obtain a function that transforms like a modular form of weight $1/2-k$. Unfortunately, Proposition~\ref{prop:growththeta} shows that the integral does not converge for $k > 0$, so it has to be regularized in a suitable way. Using the regularization of \cite{borcherds}, we define the Millson theta lift by
	\[
	\IM(F,\tau) = \lim_{T\to \infty}\int_{M_{T}}F(z)\Theta(\tau,z,\psi_{M,k})y^{-2k}d\mu(z).
	\]
	Note that we integrate in the orthogonal variable $z$ here. The integral in the symplectic variable $\tau$ was considered previously in \cite{hoevel}, \cite{brikavia} and \cite{crawford}. It was shown that the corresponding lift has jump singularities along certain geodesics in the upper-half plane, which led to the discovery of locally harmonic Maass forms. Similarly, the theta lifts investigated in the fundamental works of Borcherds \cite{borcherds} and Bruinier \cite{brhabil} (which are integrals in the $\tau$-variable) have singularities along Heegner divisors in $\h$. In contrast to these singular lifts, it turns out that the Millson theta lift is in fact harmonic on the upper half-plane.
	
%	Similarly, for $G \in H_{2k+2}^{+}(N)$ we define the regularised Shintani lift by
%	\[
%	I^{Sh}(G,\tau) = \lim_{T\to \infty}\int_{M_{T}}G(z)\overline{\Theta(\tau,z,\varphi_{Sh,k})}y^{2k+2}d\mu(z).
%	\]
%	Note that $H_{2k+2}^{+}(N) = H_{2k+2}(N)$ for $k \geq 0$.
	
	\begin{proposition}\label{prop:convergencethetalift}
			For $k \in \Z_{\geq 0}$ the Millson theta lift $\IM(F,\tau)$ of $F \in H_{-2k}^{+}(\Gamma)$ is a harmonic function that transforms like a modular form of weight $1/2-k$ for $\rho_{L}$.
%			\item The regularised Shintani theta lift of $G \in H_{2k+2}^{+}(N)$ converges to a harmonic function that transforms like a vector valued modular form of weight $3/2+k$ for $\rho^{-}_{L}$
	\end{proposition}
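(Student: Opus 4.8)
The plan is to establish two separate claims: that $\IM(F,\tau)$ transforms with weight $1/2-k$ for $\rho_L$ in $\tau$, and that it is harmonic, i.e.\ annihilated by $\Delta_{1/2-k,\tau}$. The transformation behaviour is the easy half. First I would note that by Proposition~\ref{prop:propertiestheta}(1), the Millson theta function $\Theta(\tau,z,\psi_{M,k})$ transforms with weight $1/2-k$ for $\rho_L$ in the $\tau$-variable, and this transformation property is pointwise in $z$. Since the regularized integral is taken over the $z$-variable against the $\Gamma$-invariant measure $y^{-2k}d\mu(z)$ weighted by $F(z)$ (and $F$ together with $\overline{\Theta}$ have matching weight $-2k$ in $z$, so the integrand is $\Gamma$-invariant and the regularization is well-defined), the weight $1/2-k$ transformation in $\tau$ simply passes through the integral sign. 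The only subtlety is to confirm that the Borcherds-type regularization $\lim_{T\to\infty}\int_{M_T}$ respects this, which follows because the truncation $M_T$ defined in \eqref{eq:TruncFun} is itself $\Gamma$-invariant and independent of $\tau$.

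The substantive part is convergence of the regularized integral together with harmonicity. For convergence I would combine the growth estimate of Proposition~\ref{prop:growththeta}(1) with the growth of $F$. Away from the cusps the integrand is smooth and the compact part of $M_T$ contributes no problem; near each cusp $\ell$, after applying $\sigma_\ell$, the theta function decays like $O(e^{-Cy^2})$ (for $k=0$) or, for $k>0$, has a leading term of size $y^{k+1}v^{k-1/2}\Theta_{\ell,k-1}(\tau)$ against which $F$, whose principal part is a Fourier polynomial, is at worst polynomially growing. The Borcherds regularization is precisely designed to extract and discard the divergent constant-term contribution coming from this leading theta term, leaving a finite limit as $T\to\infty$; I would invoke the standard argument (as in \cite{borcherds}, \cite{brhabil}) that the $y$-integral of the product of the polynomial growth of $F$ against the explicit leading term converges after regularization because only the zeroth Fourier mode in $x$ survives and its integral in $y$ is controlled.

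For harmonicity, the key input is Lemma~\ref{lm:iddelta}, which gives $\Delta_{1/2-k,\tau}\Theta(\tau,z,\psi_{M,k}) = \tfrac14\,\overline{\Delta_{-2k,z}\overline{\Theta(\tau,z,\psi_{M,k})}}$. I would apply $\Delta_{1/2-k,\tau}$ to the lift, move it inside the regularized integral (justified by the uniform square-exponential decay of all partial derivatives asserted at the end of Proposition~\ref{prop:growththeta}), and replace the $\tau$-Laplacian acting on $\Theta$ by the $z$-Laplacian acting on $\overline{\Theta}$. This converts the problem into an integral
\[
\Delta_{1/2-k,\tau}\IM(F,\tau) = \frac{1}{4}\lim_{T\to\infty}\int_{M_T}F(z)\,\overline{\Delta_{-2k,z}\overline{\Theta(\tau,z,\psi_{M,k})}}\,y^{-2k}d\mu(z).
\]
Now I would integrate by parts using Stokes' theorem, exploiting the self-adjointness of the weight $-2k$ Laplacian $\Delta_{-2k,z}$ with respect to the Petersson measure, to transfer $\Delta_{-2k,z}$ onto $F$. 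Since $F\in H_{-2k}^+(\Gamma)$ is harmonic, $\Delta_{-2k,z}F = 0$, and the interior term vanishes.

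The main obstacle, as always with regularized theta lifts, is the boundary terms produced by Stokes' theorem on the truncated surface $M_T$. One must show that the total boundary contribution from the small horocycles $\partial(q_\ell^{-1}D_{1/T})$ around each cusp vanishes in the limit $T\to\infty$. Here I would use the square-exponential decay of the error terms in Proposition~\ref{prop:growththeta} and carefully track the leading theta term: its contribution to the boundary integral must either cancel against the regularization or vanish by the first-derivative decay. This is the delicate calculation the authors allude to, and I would expect it to require matching the explicit leading asymptotics of $\overline{\Delta_{-2k,z}\overline{\Theta}}$ against the Fourier expansion of $F$ near each cusp, showing that all potentially divergent pieces integrate to zero in the regularized limit. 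Granting these boundary terms vanish, one concludes $\Delta_{1/2-k,\tau}\IM(F,\tau)=0$, establishing harmonicity.
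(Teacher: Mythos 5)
Your proposal follows essentially the same route as the paper: convergence is reduced via Proposition~\ref{prop:growththeta} to the constant Fourier mode of $F_\ell$ paired against the non-decaying leading term of the theta function (whose $y$-integral converges for $k>0$), the weight in $\tau$ is inherited from the theta kernel, and harmonicity follows from Lemma~\ref{lm:iddelta}, Stokes' theorem to move $\Delta_{-2k,z}$ onto the harmonic form $F$, and the vanishing of the boundary terms. One small correction: $F$ is not ``at worst polynomially growing'' at the cusps (its principal part grows exponentially); the argument still works because those modes pair with the $O(e^{-Cy^{2}})$ remainder, and only the bounded constant term $a_\ell^{+}(0)$ survives against the leading term, exactly as in the paper.
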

	
	\begin{proof}
		Unwinding the definition of the truncated surface $M_{T}$, we see that it suffices to show that the limit
		\begin{align*}
		\lim_{T \to \infty}\int_{1}^{T}\int_{0}^{\alpha_{\ell}}F_{\ell}(z)j(\sigma_{\ell},\bar{z})^{2k}\Theta(\tau,\sigma_{\ell}z,\psi_{M,k})y^{-2k-2}dx dy
		\end{align*}
		exists for every cusp $\ell \in \Iso(V)$, where $F_{\ell} = F|_{-2k}\sigma_{\ell}$.  For $k = 0$ Proposition~\ref{prop:growththeta} states that the Millson theta function is square exponentially decreasing at all cusps, so the integral actually converges without regularization in this case. For $k > 0$ we see by the same lemma that it suffices to show that
		\[
		\lim_{T \to \infty}\int_{1}^{T}\int_{0}^{\alpha_{\ell}}F_{\ell}(z)y^{-k-1}dx dy
		\]
		exists. But the integral over $x$ picks out the constant coefficient $a_{\ell}^{+}(0)$ of $F_{\ell}$, and the limit of the remaining integral over $y$ gives $\frac{1}{k}$. This shows that $\IM(F,\tau)$ is well-defined. The transformation behaviour of the Millson theta function implies that $\IM(F,\tau)$ has weight $1/2-k$ for $\rho_{L}$.

		To prove that $\IM(F,\tau)$ is harmonic we first use Lemma~\ref{lm:iddelta} to write
		\[
		\Delta_{1/2-k,\tau}\IM(F,\tau) = \lim_{T \to \infty}\frac{1}{4}\int_{M_{T}}F(z)\overline{\Delta_{-2k,z}\overline{\Theta(\tau,z,\psi_{M,k})}}y^{-2k}d\mu(z).
		\]
		By Lemma 4.3. in \cite{brhabil}, or more generally Stokes' Theorem, we have
		\begin{align*}
		&\int_{M_{T}}F(z)\overline{\Delta_{-2k,z}\overline{\Theta(\tau,z,\psi_{M,k})}}y^{-2k}d\mu(z) - \int_{M_{T}}\Delta_{-2k,z}F(z)\Theta(\tau,z,\psi_{M,k})y^{-2k}d\mu(z) \\
		&= \int_{\partial M_{T}} L_{-2k,z}F(z)\Theta(\tau,z,\psi_{M,k})y^{-2k-2}dz - \int_{\partial M_{T}} F(z)\overline{L_{-2k,z}\overline{\Theta(\tau,z,\psi_{M,k})}}y^{-2k-2}dz.
		\end{align*}  
		Now it follows easily from the growth estimates in Proposition~\ref{prop:growththeta} that the boundary integrals vanish in the limit. Since $F$ is harmonic, we obtain $\Delta_{1/2-k,\tau}\IM(F,\tau) = 0$.
	\end{proof}
	
	We define the Shintani theta lift of a cusp form $G \in S_{2k+2}(\Gamma)$ by
	\begin{align*}
	\ISh(G,\tau) = \int_{M}G(z)\overline{\Theta(\tau,z,\varphi_{Sh,k})}y^{2k+2}d\mu(z).
	\end{align*}
	The rapid decay of $G$ at the cusps and similar arguments as above show that $\ISh(G,\tau)$ converges to a harmonic function which transforms like a modular form of weight $3/2+k$ for $\overline{\rho}_{L}$. The Millson and the Shintani theta lifts are related by the following identity.
	
	\begin{proposition}\label{prop:xidiagram}
		For $F \in H_{0}^{+}(\Gamma)$ we have
		\[
		\xi_{1/2,\tau}(\IM(F,\tau)) = -\frac{1}{2\sqrt{N}}\ISh(\xi_{0,z}F,\tau)+\frac{1}{2N}\sum_{\ell \in \Gamma \setminus \Iso(V)}\varepsilon_{\ell}\overline{a_{\ell}^{+}(0)\Theta_{\ell,1}(\tau)},
		\]
		and for $k \in \Z_{> 0}$ and $F \in H_{-2k}^{+}(\Gamma)$ we have
		\[
		\xi_{1/2-k,\tau}(\IM(F,\tau)) = -\frac{1}{2\sqrt{N}}\ISh(\xi_{-2k,z}F,\tau).
		\]
	\end{proposition}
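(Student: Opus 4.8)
The plan is to pull the antilinear operator $\xi_{1/2-k,\tau}$ inside the regularized integral defining $\IM(F,\tau)$, exploit the differential equation of Lemma~\ref{lm:relshinmillson} relating the Millson and Shintani theta kernels, and then apply Stokes' theorem in the $z$-variable to transfer the $\xi$-operator from the Shintani theta function onto $F$. The boundary terms produced by Stokes' theorem will account precisely for the discrepancy between the two cases $k=0$ and $k>0$.

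First I would justify that $\xi_{1/2-k,\tau}$ commutes with the regularizing limit $\lim_{T\to\infty}\int_{M_T}$ and may be moved under the integral sign. Since $\xi_{1/2-k,\tau}$ is a first-order differential operator in $\tau$ and the theta function together with all its partial derivatives is square-exponentially decreasing away from its leading term at the cusps (the final assertion of Proposition~\ref{prop:growththeta}), the integrals and their $\tau$-derivatives converge locally uniformly in $\tau$, legitimizing the interchange. Because $F(z)y^{-2k}$ is independent of $\tau$ and $\xi$ is antilinear, this yields
\begin{align*}
\xi_{1/2-k,\tau}\IM(F,\tau) = \lim_{T\to\infty}\int_{M_T}\overline{F(z)}\,\bigl[\xi_{1/2-k,\tau}\Theta(\tau,z,\psi_{M,k})\bigr]y^{-2k}\,d\mu(z).
\end{align*}
Applying Lemma~\ref{lm:relshinmillson} replaces the bracket by $\tfrac{1}{2\sqrt{N}}\xi_{2k+2,z}\Theta(\tau,z,\varphi_{Sh,k})$; inserting $\xi_{2k+2,z}g=2iy^{2k+2}\overline{\partial_{\bar z}g}$, using $\overline{\partial_{\bar z}\Theta}=\partial_z\overline{\Theta}$, and cancelling the powers of $y$ against $d\mu(z)=y^{-2}dx\,dy$ reduces the problem to evaluating, up to the constant $\tfrac{i}{\sqrt{N}}$, the integral $\int_{M_T}\overline{F}\,\partial_z\overline{\Theta(\tau,z,\varphi_{Sh,k})}\,dx\,dy$.

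Next I would integrate by parts. Applying Stokes' theorem to the $1$-form $\overline{F}\,\overline{\Theta(\tau,z,\varphi_{Sh,k})}\,d\bar z$ on $M_T$ and substituting $\partial_z\overline{F}=\overline{\partial_{\bar z}F}=-\tfrac{i}{2}y^{2k}\,\xi_{-2k,z}F$ moves the derivative onto $F$. After multiplying by the prefactor this produces the interior term $-\tfrac{1}{2\sqrt{N}}\int_{M_T}\xi_{-2k,z}F(z)\overline{\Theta(\tau,z,\varphi_{Sh,k})}y^{2k+2}\,d\mu(z)$, which converges to $-\tfrac{1}{2\sqrt{N}}\ISh(\xi_{-2k,z}F,\tau)$ as $T\to\infty$ by definition of the Shintani lift, together with a boundary integral over $\partial M_T$. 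It remains to analyze $\lim_{T\to\infty}\int_{\partial M_T}\overline{F}\,\overline{\Theta(\tau,z,\varphi_{Sh,k})}\,d\bar z$, which is where the two cases diverge.

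Finally, I would evaluate the boundary term cusp by cusp. Conjugating to the cusp $\infty$ via $\sigma_\ell$, the automorphy factors of $F$ (weight $-2k$) and of the Shintani theta (weight $2k+2$ in $z$) together with $d\bar z$ cancel exactly (the exponents sum to $-2k+(2k+2)-2=0$), and Proposition~\ref{prop:growththeta}(3) shows the kernel contributes its $x$-independent leading term $y^{-k}\tfrac{1}{\sqrt{N}\beta_\ell}\overline{\Theta_{\ell,k+1}(\tau)}$ up to a square-exponentially small error; that error, paired against the at most exponentially growing $F_\ell$ and integrated, vanishes in the limit. The remaining $x$-integral over $[0,\alpha_\ell]$ selects the constant Fourier coefficient of $\overline{F_\ell}$, which tends to $\overline{a_\ell^+(0)}$. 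For $k>0$ the factor $y^{-k}=T^{-k}$ then forces the boundary term to zero, giving the second formula; for $k=0$ it survives and, with $\varepsilon_\ell=\alpha_\ell/\beta_\ell$, contributes $\sum_\ell\tfrac{\varepsilon_\ell}{\sqrt{N}}\overline{a_\ell^+(0)\Theta_{\ell,1}(\tau)}$. The main obstacle is exactly this boundary analysis: one must track the induced (reversed) orientation of $\partial M_T$, whose sign converts the contribution into the stated $+\tfrac{1}{2N}\sum_\ell\varepsilon_\ell\overline{a_\ell^+(0)\Theta_{\ell,1}(\tau)}$ after multiplication by $-\tfrac{1}{2\sqrt{N}}$, and one must confirm that no polynomially growing non-holomorphic constant term of $F_\ell$ spoils the limit — this is guaranteed by the growth condition defining $H_{-2k}^{+}(\Gamma)$.
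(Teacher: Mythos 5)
Your proposal is correct and follows essentially the same route as the paper's own proof: move $\xi_{1/2-k,\tau}$ under the regularized integral, apply Lemma~\ref{lm:relshinmillson} to convert it into $\xi_{2k+2,z}$ acting on the Shintani kernel, integrate by parts over $M_T$ to produce $-\frac{1}{2\sqrt{N}}\ISh(\xi_{-2k,z}F,\tau)$ plus a boundary term, and evaluate the boundary term at each cusp via Proposition~\ref{prop:growththeta}(3), which vanishes for $k>0$ and yields the $\frac{1}{2N}\sum_\ell\varepsilon_\ell\overline{a_\ell^+(0)\Theta_{\ell,1}(\tau)}$ correction for $k=0$. The only difference is presentational: you unwind the Stokes identity and the constants explicitly where the paper cites it in one line, and your sign and prefactor bookkeeping checks out.
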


	\begin{proof}
		By Lemma~\ref{lm:relshinmillson} we have for $k \in \Z_{\geq 0}$
		\begin{align*}
		\xi_{1/2-k,\tau}(\IM(F,\tau)) = \lim_{T \to \infty}\frac{1}{2\sqrt{N}}\int_{M_{T}}\overline{F(z)}\xi_{2k+2,z}\Theta(\tau,z,\varphi_{Sh,k})y^{-2k}d\mu(z).
		\end{align*}
		Using Stokes' Theorem we obtain
		\begin{align*}
		\int_{M_{T}}\overline{F(z)}\xi_{2k+2,z}\Theta(\tau,z,\varphi_{Sh,k})y^{-2k}d\mu(z) &= -\int_{M_{T}}\xi_{-2k,z}F(z)\overline{\Theta(\tau,z,\varphi_{Sh,k})}y^{2k+2}d\mu(z) \\
		&\quad-\int_{\partial M_{T}}\overline{F(z)\Theta(\tau,z,\varphi_{Sh,k})} d\bar{z}.
		\end{align*}
		The limit of the first integral on the right-hand side is $\ISh(\xi_{-2k,z}F,\tau)$. The boundary integral can be written as
		\[
		-\int_{\partial M_{T}}\overline{F(z)\Theta(\tau,z,\varphi_{Sh,k})} d\bar{z} = \sum_{\ell \in \Gamma \setminus \Iso(V)}\int_{iT}^{\alpha_{\ell}+iT}\overline{F_{\ell}(z)j(\sigma_{\ell},z)^{-2k-2}\Theta(\tau,\sigma_{\ell}z,\varphi_{Sh,k})}d\bar{z},
		\]
		where $F_{\ell} = F|_{-2k}\sigma_{\ell}$. Using Proposition~\ref{prop:growththeta} and carrying out the integral we see that the right-hand side vanishes in the limit if $k > 0$ and equals
		\[
		\frac{1}{\sqrt{N}}\sum_{\ell \in \Gamma \setminus \Iso(V)}\varepsilon_{\ell}\overline{a_{\ell}^{+}(0)\Theta_{\ell,1}(\tau)}
		\]
		if $k = 0$. This completes the proof.
	\end{proof}
	
	We summarize the most important mapping properties of the Millson and the Shintani theta lift in the following theorem.
	
	\begin{theorem}\label{thm:PropertiesLifts}~
		\begin{enumerate}
			\item The Millson theta lift maps $H^{+}_{-2k}(\Gamma)$ to $H^{+}_{1/2-k,\rho_{L}}$ for $k \geq 0$.

			\item The Millson theta lift maps $M_{0}^{!}(\Gamma)$ to $H^{+}_{1/2,\rho_{L}}$ and $M_{-2k}^{!}(\Gamma)$ to $M_{1/2-k,\rho_{L}}^{!}$ for $k > 0$.
						\item The Shintani theta lift maps $S_{2k+2}(\Gamma)$ to $S_{3/2+k,\overline{\rho}_{L}}$ for $k \geq 0$.
		\end{enumerate}
	\end{theorem}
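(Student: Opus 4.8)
The plan is to bootstrap all three mapping statements from results already available: the harmonicity and transformation behaviour of the two lifts (Proposition~\ref{prop:convergencethetalift} for the Millson lift and the discussion preceding the theorem for the Shintani lift), the growth of the theta kernels at the cusps (Proposition~\ref{prop:growththeta}), and the $\xi$-compatibility of the lifts (Proposition~\ref{prop:xidiagram}). The only genuinely new analytic input required is a growth bound for the Millson lift in the variable $\tau$; once that is in hand, the remaining statements follow formally.

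\textbf{Part (1).} By Proposition~\ref{prop:convergencethetalift}, $\IM(F,\tau)$ is a harmonic function of weight $1/2-k$ for $\rho_{L}$, so the only thing left to check is the growth condition making it a harmonic weak Maass form, namely the existence of a finite principal part together with exponential decay of the remainder. Since $\IM(F,\tau)$ is harmonic and transforms correctly, each of its Fourier coefficients is automatically of the shape $q^{n}$ or $\Gamma(\tfrac12+k,4\pi|n|v)q^{n}$; the non-holomorphic terms with $n<0$ decay, while the holomorphic terms $q^{n}$ with $n<0$ grow like $e^{2\pi|n|v}$. Hence it suffices to prove that $\IM(F,\tau)=O(e^{Cv})$ as $v\to\infty$, uniformly in $u$, for some constant $C$, which forces only finitely many negative holomorphic coefficients to be nonzero. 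This growth estimate is the step I expect to be the main obstacle. I would establish it by unfolding the regularized theta integral against each cusp $\ell\in\Gamma\setminus\Iso(V)$, exactly as in the proof of Proposition~\ref{prop:convergencethetalift}, and inserting the principal part of $F$ (which grows only polynomially in the cusp parameter) together with the asymptotics of the Millson kernel from Proposition~\ref{prop:growththeta}; the resulting contributions are bounded by a fixed linear exponential in $v$. Combined with harmonicity, this yields $\IM(F,\tau)\in H^{+}_{1/2-k,\rho_{L}}$.

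\textbf{Part (2).} Granting Part (1), both statements follow from Proposition~\ref{prop:xidiagram}. If $k>0$ and $F\in M^{!}_{-2k}(\Gamma)$ is weakly holomorphic, then $\xi_{-2k,z}F=0$, so $\xi_{1/2-k,\tau}\IM(F,\tau)=0$; a harmonic weak Maass form with vanishing $\xi$-image is weakly holomorphic, whence $\IM(F,\tau)\in M^{!}_{1/2-k,\rho_{L}}$. For $k=0$ we have $M^{!}_{0}(\Gamma)\subseteq H^{+}_{0}(\Gamma)$, so $\IM(F,\tau)\in H^{+}_{1/2,\rho_{L}}$ already by Part (1); here the lift need not be holomorphic, since by Proposition~\ref{prop:xidiagram} its $\xi$-image equals the unary theta contribution $\tfrac{1}{2N}\sum_{\ell}\varepsilon_{\ell}\overline{a^{+}_{\ell}(0)\Theta_{\ell,1}(\tau)}$, which is why the statement is phrased with $H^{+}$ rather than $M^{!}$.

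\textbf{Part (3).} Rather than verifying holomorphicity and cuspidality of $\ISh$ by hand, I would deduce them from Parts (1)--(2) via surjectivity of $\xi$. Since $\xi_{-2k}\colon H^{+}_{-2k}(\Gamma)\to S_{2k+2}(\Gamma)$ is surjective, every $G\in S_{2k+2}(\Gamma)$ can be written as $G=\xi_{-2k,z}F$ for some $F\in H^{+}_{-2k}(\Gamma)$. For $k>0$, Proposition~\ref{prop:xidiagram} gives $\ISh(G,\tau)=-2\sqrt{N}\,\xi_{1/2-k,\tau}\IM(F,\tau)$, and as $\IM(F,\tau)\in H^{+}_{1/2-k,\rho_{L}}$ by Part (1), its $\xi$-image lies in $S_{3/2+k,\overline{\rho}_{L}}$; hence $\ISh(G,\tau)\in S_{3/2+k,\overline{\rho}_{L}}$. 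For $k=0$ the same computation applies after subtracting the correction term of Proposition~\ref{prop:xidiagram}, which is itself a cusp form because $\Theta_{\ell,1}(\tau)$ is a weight $3/2$ cusp form; thus $\ISh(G,\tau)$ is a difference of cusp forms and lies in $S_{3/2,\overline{\rho}_{L}}$.
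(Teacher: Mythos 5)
Your Parts (2) and (3) coincide with the paper's argument: Part (2) is Proposition~\ref{prop:xidiagram} plus the facts that $\xi_{-2k}$ annihilates weakly holomorphic forms and that $\overline{\Theta_{\ell,1}}$ is a weight $3/2$ cusp form, and Part (3) is Part (1) combined with Proposition~\ref{prop:xidiagram} and the surjectivity of $\xi_{-2k}$. The problem is Part (1), which is where all the substance lies. The paper does not prove Part (1) by a soft growth estimate: it computes the complete Fourier expansion of $\IM(F,\tau)$ (all of Section~\ref{sec:fourexp}) and then reads off that the expansion has exactly the shape required for $H^{+}_{1/2-k,\rho_{L}}$, the finiteness of the principal part coming from the explicit form of the complementary trace via Proposition 4.6 of \cite{brfu06}.

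There are two concrete gaps in your shortcut. First, your claim that harmonicity and modularity force every Fourier coefficient to be either $a^{+}(n)q^{n}$ or $a^{-}(n)\Gamma(\tfrac12+k,4\pi|n|v)q^{n}$ with $n<0$ in the second case is false: the second solution of the relevant second-order ODE exists for \emph{every} index, so a harmonic form of linear exponential growth may also carry non-holomorphic terms of index $n\geq 0$, including a constant term proportional to $v^{1/2+k}$ (this is precisely the larger space $H_{\kappa}$ discussed in Section~\ref{Extensions}). Positive-index non-holomorphic terms grow like $e^{2\pi nv}$, so an estimate $\IM(F,\tau)=O(e^{Cv})$ only places the lift in that larger space, not in $H^{+}_{1/2-k,\rho_{L}}$; one must separately show that the non-holomorphic part decays. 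The natural patch --- deduce this from Proposition~\ref{prop:xidiagram} by arguing that $\xi_{1/2-k,\tau}\IM(F,\tau)$ is a cusp form --- presupposes the holomorphy and cuspidality of $\ISh(\xi_{-2k}F,\tau)$, i.e.\ your Part (3), which you in turn derive from Part (1); as structured this is circular. Second, even the growth estimate itself is only asserted: the delicate point is that the coefficients of index $-Nd^{2}$ are given by complementary traces, which are infinite sums over $\Gamma\setminus L_{-Nd^{2},h}$ built from the principal part of $F$, and showing that these vanish for all but finitely many $d$ is genuinely nontrivial (your parenthetical that the principal part ``grows only polynomially in the cusp parameter'' is also misleading --- as a function of $y$ it grows linear-exponentially). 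So Part (1) really does require the Fourier expansion, or at least a substantially more detailed analysis than the one sketched.
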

	
	\begin{proof}
		For the first item it remains to compute the Fourier expansion of the Millson theta lift, which will be done in Section~\ref{sec:fourexp}. The second claim then follows immediatly from Proposition~\ref{prop:xidiagram} if we use that $\xi_{-2k}$ annihilates holomorphic functions and that $\overline{\Theta_{\ell,1}(\tau)}$ is a cusp form of weight $3/2$ for $\overline{\rho}_{L}$. The third item then follows by combining the first item with Proposition~\ref{prop:xidiagram} and the fact that $\xi_{-2k}:H^{+}_{-2k}(\Gamma) \to S_{2k+2}(\Gamma)$ is surjective, see \cite[Theorem 3.7.]{brfu04}.
	\end{proof}
	
	The square exponential decay of the $k = 0$ Millson theta function and the Kudla-Millson theta function at the cusps implies that the integral of a harmonic weak Maass form $F \in H_{0}^{+}(\Gamma)$ against each of the two theta functions over $M$ converges without regularization. Following an idea of \cite{brono2}, we  define theta lifts of $F \in H_{-2k}^{+}(\Gamma)$ by first raising it to a $\Gamma$-invariant function, integrating it against the two theta functions, and then applying suitable differential operators to make the result harmonic again. To make this precise let $k \in \Z_{\geq 0}$ and $F \in H_{-2k}^{+}(\Gamma)$. We define
%	\begin{align*}
%		\tilde{I}^{M}(F,\tau) &= \begin{cases} L_{1/2,\tau}^{k/2}\int_{M}R_{-2k,z}^{k}F(z)\Theta(\tau,z,\psi_{M})d\mu(z), & \text{if $k$ is even,}\\
%		R_{1/2,\tau}^{(k+1)/2}\int_{M}R_{-2k,z}^{k}F(z)\Theta(\tau,z,\psi_{M})d\mu(z), & \text{if $k$ is odd.}\end{cases}
%	\end{align*}	
%	Further, we set
%	\begin{align*}
%		\tilde{I}^{KM}(F,\tau) &= \begin{cases} R_{3/2,\tau}^{k/2}\int_{M}R_{-2k,z}^{k}F(z)\Theta(\tau,z,\varphi_{KM})d\mu(z), & \text{if $k$ is even,} \\
%		L_{3/2,\tau}^{(k+1)/2}\int_{M}R_{-2k,z}^{k}F(z)\Theta(\tau,z,\varphi_{KM})d\mu(z), &\text{if $k$ is odd.} \end{cases}
%	\end{align*}
	\begin{align*}
		\LambdaM (F,\tau)&= \begin{dcases}  L_{1/2,\tau}^{k/2}\int_{M}R_{-2k,z}^{k}F(z)\Theta(\tau,z,\psi_{M})d\mu(z), & \text{if $k$ is even,}\\
		L_{3/2,\tau}^{(k+1)/2}\int_{M}R_{-2k,z}^{k}F(z)\Theta(\tau,z,\varphi_{KM})d\mu(z), &\text{if $k$ is odd.} \end{dcases}
	\end{align*}

	\begin{proposition}
	Let $k \in \Z_{\geq 0}$ and $F \in H_{-2k}^{+}(\Gamma)$.
	The theta lift $\LambdaM(F,\tau)$ is a harmonic function which transforms like a modular form of weight $1/2-k$ for $\rho_{L}$.
%		\begin{enumerate}
%		\item The theta lift $\tilde{I}^{M}(F,\tau)$ is a harmonic function which transforms like a vector valued modular form for $\rho_{L}$ of weight $1/2-k$ if $k$ is even and $3/2+k$ if $k$ is odd.
%		\item The theta lift $\tilde{I}^{KM}(F,\tau)$ is a harmonic function which transforms like a vector valued modular form for $\rho_{L}$ of weight $3/2+k$ if $k$ is even and $1/2-k$ if $k$ is odd.
%		\end{enumerate}
	\end{proposition}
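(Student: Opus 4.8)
The plan is to handle the two parities of $k$ in parallel, since the same mechanism drives both cases. Throughout write $g = R_{-2k,z}^{k}F$, which is a $\Gamma$-invariant (weight $0$) real-analytic function, because $F$ has weight $-2k$ and each raising operator increases the weight by $2$. For $k$ even set $\Phi(\tau) = \int_{M} g(z)\Theta(\tau,z,\psi_{M})\,d\mu(z)$, and for $k$ odd set $\Psi(\tau) = \int_{M} g(z)\Theta(\tau,z,\varphi_{KM})\,d\mu(z)$, so that $\LambdaM(F,\tau) = L_{1/2,\tau}^{k/2}\Phi$ or $L_{3/2,\tau}^{(k+1)/2}\Psi$ respectively. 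First I would record that these integrals converge absolutely and define smooth functions of $\tau$: by Proposition~\ref{prop:growththeta} the $k=0$ Millson and the Kudla--Millson theta functions decay like $e^{-Cy^{2}}$ at every cusp, which dominates the at most single-exponential growth of $g$ coming from the principal part of $F$. The transformation behaviour is then immediate, since $\Theta(\tau,z,\psi_{M})$ has weight $1/2$ and $\Theta(\tau,z,\varphi_{KM})$ has weight $3/2$ in $\tau$ for $\rho_{L}$ by Proposition~\ref{prop:propertiestheta}; thus $\Phi$ and $\Psi$ inherit these weights, and as the lowering operators drop the weight by $2$ while preserving the representation, $\LambdaM(F,\tau)$ has weight $1/2-k$ for $\rho_{L}$ in both cases.

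The substance is harmonicity, and here the idea is to commute the target Laplacian past the iterated lowering operator using Lemma~\ref{lm:reldiff}. For $k$ even this gives $\Delta_{1/2-k,\tau}\LambdaM(F,\tau) = L_{1/2,\tau}^{k/2}\bigl(\Delta_{1/2,\tau}\Phi + \tfrac{k(k+1)}{4}\Phi\bigr)$, so it suffices to show $\Delta_{1/2,\tau}\Phi = -\tfrac{k(k+1)}{4}\Phi$. For this I would use the identity of Lemma~\ref{lm:iddelta}, which for the $k=0$ Millson function reads $\Delta_{1/2,\tau}\Theta(\tau,z,\psi_{M}) = \tfrac14\Delta_{0,z}\Theta(\tau,z,\psi_{M})$ (conjugation being harmless since $\Delta_{0}$ has real coefficients), to trade the Laplacian in $\tau$ for one in $z$ under the integral sign. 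Then I would integrate by parts exactly as in the proof of Proposition~\ref{prop:convergencethetalift}: the operator $\Delta_{0}$ is formally self-adjoint for $d\mu(z)$, so Green's formula on the truncated surface $M_{T}$ moves $\Delta_{0,z}$ from the theta kernel onto $g$, with the boundary integrals over $\partial M_{T}$ vanishing as $T\to\infty$.

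To evaluate the resulting integrand I would invoke Lemma~\ref{lm:reldiff} in the case $\ell=0$: applied to the harmonic form $F$ it yields $\Delta_{0,z}R_{-2k,z}^{k}F = -k(k+1)R_{-2k,z}^{k}F = -k(k+1)g$, since the internal Laplacian $\Delta_{-2k,z}$ annihilates $F$. Hence $\Delta_{1/2,\tau}\Phi = \tfrac14\int_{M}(\Delta_{0,z}g)\,\Theta(\tau,z,\psi_{M})\,d\mu(z) = -\tfrac{k(k+1)}{4}\Phi$, which exactly cancels the extra constant produced by Lemma~\ref{lm:reldiff}, so $\Delta_{1/2-k,\tau}\LambdaM(F,\tau)=0$. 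The odd case is entirely parallel: Lemma~\ref{lm:reldiff} gives $\Delta_{1/2-k,\tau}\LambdaM(F,\tau) = L_{3/2,\tau}^{(k+1)/2}\bigl(\Delta_{3/2,\tau}\Psi + \tfrac{k(k+1)}{4}\Psi\bigr)$, the second identity of Lemma~\ref{lm:iddelta} turns $\Delta_{3/2,\tau}\Theta(\tau,z,\varphi_{KM})$ into $\tfrac14\Delta_{0,z}\Theta(\tau,z,\varphi_{KM})$, and the same integration by parts together with the commutation relation give $\Delta_{3/2,\tau}\Psi = -\tfrac{k(k+1)}{4}\Psi$.

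I expect the main obstacle to be the rigorous justification that the boundary terms in Green's formula vanish as $T\to\infty$, together with the interchange of $\Delta_{\tau}$ and the integral. Both rest on the fact, recorded in the last line of Proposition~\ref{prop:growththeta}, that the $k=0$ Millson and the Kudla--Millson theta functions \emph{and all their partial derivatives} decay like $e^{-Cy^{2}}$ at every cusp; this square-exponential decay beats the at most exponential growth of $g$ and of its derivatives, so the boundary contributions disappear and differentiation under the integral is legitimate. Everything else is a formal manipulation with the commutation relations already assembled in Lemmas~\ref{lm:reldiff} and~\ref{lm:iddelta}.
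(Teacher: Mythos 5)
Your proposal is correct and follows exactly the route the paper indicates (its own proof is a two-line sketch citing Lemma~\ref{lm:reldiff}, Lemma~\ref{lm:iddelta} and Stokes' Theorem and leaving the details to the reader): convergence and the transformation law from the square-exponential decay and Proposition~\ref{prop:propertiestheta}, then harmonicity by commuting $\Delta_{1/2-k,\tau}$ past the iterated lowering operator, trading the $\tau$-Laplacian for $\Delta_{0,z}$ on the kernel, integrating by parts, and using $\Delta_{0,z}R_{-2k}^{k}F=-k(k+1)R_{-2k}^{k}F$ to cancel the constant $\tfrac{k}{4}(k+1)$. You have simply written out the details the authors omit, and the bookkeeping of the constants is right in both parities.
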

	
	\begin{proof}
		By what we have said above, all integrals converge. The transformation behaviour is then obvious. To prove that the lifts are harmonic we use the relations in Lemma~\ref{lm:reldiff}, Lemma~\ref{lm:iddelta} and Stokes' Theorem as above. We leave the details to the reader.
	\end{proof}

	\begin{remark}
	Similarly, we can define a theta lift
		\begin{align*}
		\widetilde\Lambda^{\mathrm{M}} (F,\tau)&= \begin{dcases}  R_{1/2,\tau}^{(k+1)/2}\int_{M}R_{-2k,z}^{k}F(z)\Theta(\tau,z,\psi_{M})d\mu(z), & \text{if $k$ is odd,}\\
		R_{3/2,\tau}^{k/2}\int_{M}R_{-2k,z}^{k}F(z)\Theta(\tau,z,\varphi_{KM})d\mu(z), &\text{if $k$ is even.} \end{dcases}
	\end{align*}
	This gives a weakly holomorphic modular form of weight $3/2+k$ for $\rho_{L}$ if $k > 0$ (see \cite{alfes,alfesdiss}). The case $k=0$ was considered by Bruinier and Funke in \cite{brfu06}.
	\end{remark}
	
	We now want to show that the regularized Millson theta lift $\IM(F,\tau)$ defined above agrees with $\LambdaM(F,\tau)$ up to some constant. This will be useful when we compute the Fourier coefficients of $\IM(F,\tau)$.
	
	\begin{theorem}\label{thm:relationlifts}
		Let $k \in \Z_{\geq 0}$ and $F \in H_{-2k}^{+}(\Gamma)$. Then
		\begin{align*}
		\IM(F,\tau) = \bigg(\left(-\frac{\pi}{N}\right)^{k/2}\prod_{j = 0}^{k/2-1}(k-2j)(k+2j+1)\bigg)^{-1} \LambdaM(F,\tau),
		\end{align*}
		if $k$ is even, and
		\begin{align*}
		\IM(F,\tau) = \bigg(-\frac{1}{2\sqrt{N}}\left( -\frac{\pi}{N}\right)^{(k-1)/2}\prod_{j=0}^{(k-1)/2}(k-2j+1)(k+2j)\bigg)^{-1}\LambdaM(F,\tau),
		\end{align*}
		if $k$ is odd.
%		\begin{align*}
%		\tilde{I}^{M}(F,\tau) = \bigg(\left(-\frac{\pi}{N}\right)^{k/2}\prod_{j = 0}^{k/2-1}(k-2j)(k+2j+1)\bigg)I^{M}(F,\tau) 
%		\end{align*}
%		if $k$ is even and
%		\begin{align*}
%		\tilde{I}^{KM}(F,\tau) = \bigg(-\frac{1}{2\sqrt{N}}\left( -\frac{\pi}{N}\right)^{(k-1)/2}\prod_{j=0}^{(k-1)/2}(k-2j+1)(k+2j)\bigg)I^{M}(F,\tau)
%		\end{align*}
%		if $k$ is odd.
	\end{theorem}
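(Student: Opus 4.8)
The plan is to prove both formulas by induction on $k$ in steps of two, using the differential relations of Lemma~\ref{lm:relmillsondiffweight} as the engine that trades the iterated $\tau$-lowering operators in the definition of $\LambdaM(F,\tau)$ for a raising of the Millson index of the theta kernel. The even case $k\in 2\Z$ climbs the chain $\psi_{M,0}\to\psi_{M,2}\to\cdots\to\psi_{M,k}$ by repeated use of the first identity in Lemma~\ref{lm:relmillsondiffweight}, while the odd case first passes from $\varphi_{KM}$ to $\psi_{M,1}$ via the second identity and then climbs $\psi_{M,1}\to\psi_{M,3}\to\cdots\to\psi_{M,k}$. In each case the number of index-raising steps equals the number of $\tau$-lowering operators in the definition of $\LambdaM(F,\tau)$, namely $k/2$ in the even case and $(k+1)/2$ in the odd case, and the split into the two chains is exactly forced by which base kernel ($\psi_{M,0}$ or $\varphi_{KM}$) can be reached. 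The base cases $k=0$, where $C_0=1$ and the two lifts literally coincide since $y^{-2k}=1$, and $k=1$, handled by a single application of the $\varphi_{KM}\to\psi_{M,1}$ identity, are checked by hand.

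For the inductive step I would first observe that $L_{1/2,\tau}^{k/2}$ (respectively $L_{3/2,\tau}^{(k+1)/2}$) differentiates only in $\tau$ and hence may be pulled inside the $z$-integral, so that $\LambdaM(F,\tau)$ takes the form $\int_M R_{-2k,z}^{k}F(z)\bigl(L_{\cdot,\tau}^{m}\Theta(\tau,z,\Phi)\bigr)d\mu(z)$ with $\Phi$ the base kernel. Each application of Lemma~\ref{lm:relmillsondiffweight} then converts one factor $L_\tau$ acting on the index-$j$ Millson kernel into the index-$(j+2)$ kernel, at the cost of two $z$-lowering operators together with a factor $\tfrac{\pi}{N}(\Delta_z-c)$ (respectively $-\tfrac{1}{2\sqrt N}(\Delta_{-2,z}-2)$ at the $\varphi_{KM}$ step). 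I would use Stokes' theorem to integrate these $z$-lowering operators by parts onto $R_{-2k,z}^{k}F$, and then use Lemma~\ref{lm:reldiff} together with the harmonicity $\Delta_{-2k,z}F=0$ to evaluate each surviving factor $\Delta_z-c$ on the relevant iterate of $F$ as a scalar. Collecting these scalars over all steps reproduces exactly the products $\prod_j(k-2j)(k+2j+1)$, the powers of $\pi/N$, and the extra factor $-\tfrac{1}{2\sqrt N}$ in the odd case, i.e.\ the constant $C_k$; after all steps the integrand has become $F(z)\Theta(\tau,z,\psi_{M,k})y^{-2k}$, which is precisely the kernel of $\IM(F,\tau)$.

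The hard part will be controlling the boundary contributions generated by the repeated use of Stokes' theorem. The base kernels $\psi_{M,0}$ and $\varphi_{KM}$ are square-exponentially decreasing at every cusp (Proposition~\ref{prop:growththeta}, including the decay of all partial derivatives recorded there), so the integral defining $\LambdaM(F,\tau)$ converges absolutely; but as soon as the index of the Millson kernel is raised, the non-decaying term $-y^{k+1}\tfrac{k}{2\pi\beta_\ell}v^{k-1/2}\Theta_{\ell,k-1}(\tau)$ from Proposition~\ref{prop:growththeta} reappears, which is exactly the reason $\IM(F,\tau)$ had to be regularized in the first place. I would therefore carry out the whole computation on the truncated surface $M_T$ and analyze the boundary integrals over $\partial M_T$ as $T\to\infty$, showing that the genuinely decaying boundary terms vanish while the non-decaying ones reassemble into precisely the regularization built into $\IM(F,\tau)$ (the constant-term--times--$\tfrac1k$ mechanism already used in the proof of Proposition~\ref{prop:convergencethetalift}). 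A secondary technical point, expected to require care but not to cause real difficulty, is the bookkeeping of the complex conjugations that relate the $z$- and $\tau$-operators in Lemma~\ref{lm:relmillsondiffweight}, and the verification that the weight indices in Lemma~\ref{lm:reldiff} line up with the constants appearing at each step so that the eigenvalues are exactly $-(k-2j)(k+2j+1)$.
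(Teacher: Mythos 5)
Your proposal is correct and follows essentially the same route as the paper: the paper formalizes your ``climb the chain of Millson kernels'' as an interpolating family of lifts $I_j(F,\tau)$ between $\LambdaM(F,\tau)$ and $\IM(F,\tau)$ and derives the recursion $I_j = -\tfrac{\pi}{N}(k-2j)(k+2j+1)I_{j+1}$ by exactly the combination of Lemma~\ref{lm:relmillsondiffweight}, Stokes' theorem, and the eigenvalue computation from Lemma~\ref{lm:reldiff} that you describe, with the extra $-\tfrac{1}{2\sqrt{N}}k(k+1)$ step through $\varphi_{KM}$ for odd $k$. The only point where your expectation differs is the boundary analysis: the paper asserts that all boundary integrals over $\partial M_T$ vanish in the limit (rather than reassembling into the regularization, as you anticipate), but your plan of carrying out the whole computation on the truncated surface $M_T$ is precisely the right framework to settle this.
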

	
	\begin{proof}
		The proof involves several applications of Stokes' Theorem. Using Proposition~\ref{prop:growththeta} it is straightforward but tedious to verify that all boundary integrals vanish in the limit. We leave these verifications to the suspicious reader and omit all boundary integrals to simplify the exposition.
		
		Let $k$ be even. We consider the expression
		\begin{align*}
		I_{j}(F,\tau)=\lim_{T \to \infty }L_{1/2-2j,\tau}^{k/2-j}\int_{M_{T}}R_{-2k,z}^{k-2j}F(z)\Theta(\tau,z,\psi_{M,2j})y^{-4j}d\mu(z)
		\end{align*}
		for $0 \leq j \leq k/2$. By the same arguments as above, it converges to a harmonic function of weight $1/2-k$ for $\rho_{L}$ which equals $\LambdaM(F,\tau)$ for $j = 0$ and $\IM(F,\tau)$ for $j = k/2$. We split off the innermost lowering operator in $\tau$ and the two outermost raising operators in $z$ and apply \cite[Lemma 4.2]{brhabil} (an instance of Stokes' Theorem) twice to see that $I_{j}(F,\tau)$ equals
		\begin{align*}
		\lim_{T \to \infty }L_{1/2-2j-2,\tau}^{k/2-j-1} \int_{M_{T}}R_{-2k,z}^{k-2j-2}F(z)\overline{L_{-4j-2,z}L_{-4j,z}\overline{L_{1/2-2j,\tau}\Theta(\tau,z,\psi_{M,2j})}}y^{-4j-4}d\mu(z).
		\end{align*}
		By Lemma~\ref{lm:relmillsondiffweight} we have
		\begin{align*}
		L_{-4j-2,z}L_{-4j,z}\overline{L_{1/2-2j,\tau}\Theta(\tau,z,\psi_{M,2j})} = \frac{\pi}{N}(\Delta_{-4j-4,z}-8j-6)\overline{\Theta(\tau,z,\psi_{M,2j+2})}.
		\end{align*}
		Using \cite[Lemma 4.3.]{brhabil} we now move the Laplace operator to $R_{-2k,z}^{k-2k-2}F$ in the integral over $M_{T}$. Lemma~\ref{lm:reldiff} shows that 
		\[
		\Delta_{-4j-4,z}R_{-2k}^{k-2j-2}F = -(k-2j-2)(k+2j+3) R_{-2k,z}^{k-2j-2}F,
		\]
		so together we obtain after a short calculation
		\[
		I_{j}(F,\tau) = -\frac{\pi}{N}(k-2j)(k+2j+1)I_{j+1}(F,\tau).
		\]
		The formula for even $k$ now follows inductively.
		
		For odd $k$ we first split off the innermost lowering operator in $\tau$ and the outermost raising operator in $z$ in $\LambdaM(F,\tau)$ and apply \cite[Lemma 4.2.]{brhabil} to get
		\begin{align*}
		\LambdaM(F,\tau) = -\lim_{T \to \infty}L_{-1/2,\tau}^{(k-1)/2}\int_{M_{T}}R_{-2k,z}^{k-1}F(z)\overline{L_{0,z}\overline{L_{3/2,\tau}\Theta(\tau,z,\varphi_{KM})}}y^{-2}d\mu(z).
		\end{align*}
		By Lemma~\ref{lm:relmillsondiffweight} we have
		\[
		L_{0,z}\overline{L_{3/2,\tau}\Theta(\tau,z,\varphi_{KM})} = -\frac{1}{2\sqrt{N}}(\Delta_{-2,z} - 2)\overline{\Theta(\tau,z,\psi_{M,1})}.
		\]
		Moving the Laplace operator to $R_{-2k}^{k-1}F$ (see \cite[Lemma 4.3.]{brhabil}) and using that $\Delta_{-2,z}R_{-2k,z}^{k-1}F = -(k-1)(k+2)R_{-2k,z}^{k-1}F$ we arrive at
		\[
		\LambdaM(F,\tau) = -\frac{1}{2\sqrt{N}}k(k+1)\lim_{T \to \infty}L_{-1/2,\tau}^{(k-1)/2}\int_{M_{T}}R_{-2k,z}^{k-1}F(z)\Theta(\tau,z,\psi_{M,1})y^{-2}d\mu(z).
		\]
		Similarly as in the even $k$ case we consider
		\begin{align*}
		I_{j}(F,\tau)=\lim_{T \to \infty }L_{-1/2-2j,\tau}^{(k-1)/2-j}\int_{M_{T}}R_{-2k,z}^{k-1-2j}F(z)\Theta(\tau,z,\psi_{M,2j+1})y^{-4j-2}d\mu(z)
		\end{align*}
		for $0 \leq j \leq (k-1)/2$. Note that $-\frac{1}{2\sqrt{N}}k(k+1)I_{0} = \LambdaM$ and $I_{(k-1)/2} = \IM$. As above we see that
		\[
		I_{j} = -\frac{\pi}{N}(k-(2j+1))(k+(2j+1)+1)I_{j+1}.
		\]
		The formula for odd $k$ now follows inductively.
	\end{proof}

	\section{The Fourier expansion of $\IM(F,\tau)$}\label{sec:fourexp}

	Let $k \in \Z_{\geq 0}$ and let $F \in H_{-2k}^{+}(\Gamma)$ be a harmonic weak Maass form of weight $-2k$ for $\Gamma$. In order to describe the Fourier expansion of $\IM(F,\tau)$ we first have to introduce the modular trace function and geodesic cycle integrals.

	\subsection{Heegner points and the modular trace function}
	
	For $X\in V$ with $Q(X) = m \in \Q_{> 0}$ we let
\[
D_{X}= \mathrm{span}(X) \in D
\]
be the Heegner point of discriminant $m$ associated to $X$. We use the same symbol for the image of $D_{X}$ in $M$. Note that for $m\in \Q_{>0}$ and $h\in L'/L$ with $Q(h) \equiv m (\Z)$, the group $\G$ acts on the set 
\[
 L_{m,h}=\{ X \in L+h\,:\, Q(X)=m \}
\]
with finitely many orbits, and the stabilizer $\Gamma_{X}$ is finite.
%$G_X$ of $X \in L_{m,h}$ in $G(\R) = \SL_{2}(\R)$ is isomorphic to $\mathrm{SO}(2)$ and $\G_X=G_X\cap \G$ is finite.
% We define the Heegner divisor of discriminant $m$ on $M$ by
%\[
% Z(m,h)=\sum_{X\in\G\setminus L_{m,h}} \frac{1}{|\overline{\G}_X|} Z(X).
%\]

For some $\Gamma$-invariant function $F$ function on $\h$ we define the modular trace function of $F$ by
\[
 \mt(F;m,h)=\sum_{X\in \G \bs L_{m,h}}\frac{1}{|\overline{\G}_X|} F(D_X),
\]
where $\overline{\G}_X$ denotes the stabilizer of $X$ in $\overline{\G}$, the image of $\G$ in $\mathrm{PSL}_2(\Z)$.
Moreover, we define
\[
	L^{+}_{m,h}=\left\{X=\begin{pmatrix} x_2 & x_1\\x_3&-x_2\end{pmatrix}\in L_{m,h}\,:\, x_1\geq0\right\}\quad\text{and}\quad L^{-}_{m,h}=L_{m,h}\setminus L_{m,h}^{+},
\]
and accordingly
\[
	\mt^+(F;m,h)=\sum_{X\in \G \bs L^+_{m,h}}\frac{1}{|\overline{\G}_X|} F(D_X)\quad\text{and}\quad \mt^-(F;m,h)=\sum_{X\in \G \bs L^-_{m,h}}\frac{1}{|\overline{\G}_X|}F(D_X).
\]

\subsection{Geodesic cycle integrals}

A vector $X \in V$ of negative length $Q(X)=m\in\Q_{<0}$ defines a geodesic $c_X$ in $D$ via
\[
 c_X=\{z\in D\,:\, z\perp X\}.
\]
We write $c(X)=\G_X\setminus c_X$ for the image in $M = \Gamma \setminus \h$. 

If $|m|/N$ is not a square in $\Q$, then $X^\perp$ is non-split over $\Q$ and the stabilizer $\overline{\G}_X$ is infinite cyclic. On the other hand, if $|m|/N$ is a square, then $X^\perp$ is split and $\overline{\G}_X$ is trivial.
In the first case the geodesic $c(X)$ is closed, while in the second case $c(X)$ is an infinite geodesic (see also \cite[Lemma 3.6]{funke}).

In the case that $c(X)$ is an infinite geodesic, $X$ is orthogonal to two isotropic lines $\ell_X =\operatorname{span}(Y)$ and $\widetilde\ell_X =\operatorname{span}(\widetilde{Y})$, with $Y$ and $\widetilde{Y}$ positively oriented. We call $\ell_X$ the line associated to $X$ if the triple $(X,Y,\widetilde{Y})$ is a positively oriented basis for $V$, and we write $X\sim  \ell_X$. Note that $\widetilde{\ell}_X=\ell_{-X}$.

For $m \in \Q_{<0}$ and $X \in L_{m,h}$ we define the cycle integral of a cusp form $G \in S_{2k+2}(\Gamma)$ along the geodesic $c(X)$ by
\[
\mathcal{C}(G,X) = \int_{c(X)}G(z)Q_{X}^{k}(z)dz,
\]
where the orientation of $c(X)$ is defined using an explicit parametrization as follows:

Since $Q(X) = m < 0$, there is some matrix $g \in \SL_{2}(\R)$ such that
$
g^{-1}X = \sqrt{|m|/N}\left(\begin{smallmatrix}1 & 0 \\ 0 & -1 \end{smallmatrix}\right).
$
Recall that the stabilizer $\overline{\Gamma}_{X}$ is either trivial or infinite cyclic. In the second case, the stabilizer of  $g^{-1}X$ in $g^{-1}\overline{\Gamma}$ is generated by some matrix $\left(\begin{smallmatrix}\varepsilon & 0 \\ 0  & \varepsilon^{-1}\end{smallmatrix}\right)$ with $\varepsilon > 1$. We can now parametrize $c(X)$ by $g.iy$ with $y \in (0,\infty)$ if $|m|/N$ is a square, and $y \in (1,\varepsilon^{2})$ if $|m|/N$ is not a square. Note that $\frac{d}{dy}g.iy = i\cdot j(g,iy)^{-2}$ and
\[
Q_{X}(g.iy) = j(g,iy)^{-2}Q_{g^{-1}.X}(iy) = j(g,iy)^{-2} (-2\sqrt{|m|N}iy).
\]
Writing $G_{g} = G|_{2k+2}g$ we find
\[
\mathcal{C}(G,X) = (-2\sqrt{|m|N}i)^{k}i\int_{0}^{\infty}G_{g}(iy)y^{k}dy,
\]
%or
%\[
%\mathcal{C}(G,X) = (-2\sqrt{|m|N}i)^{k}i\int_{1}^{\varepsilon^{2}}G_{g}(iy)y^{k}dy,
%\]
if $|m|/N$ is a square and similarly (i.e.~with the integral from $1$ to $\varepsilon^{2}$) if $|m|/N$ is not a square. Using the transformation behaviour of $G$ it is easy to see that the right-hand side, and thus the implied orientation of $c(X)$, is independent of the choice of the matrix $g$.
%This agrees, up to a suitable choice of the orientation of $c(X)$, with the cycle integral given above. Note that if $g$ and $g'$ are two suitable matrices, then $g^{-1}g'$ stabilizes $\left(\begin{smallmatrix}1 & 0 \\ 0 & -1\end{smallmatrix}\right)$, which implies $g' = g\left( \begin{smallmatrix}a & 0 \\ 0 & a^{-1}\end{smallmatrix}\right)$ for some $a \neq 0$. Replacing $a^{2}y$ by $y$ and using the transformation behaviour of $G$ under $\Gamma$ it is easy to see that $\mathcal{C}(G,X)$ is well defined, i.e. independent of the choice of $g$.

Finally, we define the trace of $G$ for $m > 0$ by
\[
\mt(G;m,h) = \sum_{X \in \Gamma \setminus L_{m,h}}\mathcal{C}(G,X).
\]

	\subsection{The complementary trace}
	
	Let $m \in \Q_{<0}$ and assume that $|m|/N$ is a square, i.e.~$m = -Nd^{2}$ for some $d \in \Q$. Let $F \in H_{-2k}^{+}(\Gamma)$. For an isotropic line $\ell$ we let $a_{\ell}^{+}(w)$ be the coefficients of the holomorphic part $F_{\ell}^{+}$ of $F_{\ell} = F|_{-2k}\sigma_{\ell}$. Let $X \in L_{-Nd^{2},h}$. Recall that $\overline{\Gamma}_{X}$ is trivial and $X$ gives rise to an infinite geodesic $c(X)$. Choosing the orientation of $V$ appropriately, we have
	\[
	\sigma_{\ell_{X}}^{-1}X = d\begin{pmatrix}1 & -2r_{\ell_{X}} \\ 0 & -1 \end{pmatrix}
	\]
	for some $r_{\ell_{X}} \in \Q$. Note that the geodesic $c_{X}$ in $D$ is given by
	\[
	c_{X} = \sigma_{\ell_{X}}\{z \in D: \Re(z) = r_{\ell_{X}}\}.
	\]
	Therefore we call $\Re(c(X)) := r_{\ell_{X}}$ the real part of $c(X)$.
	We now define the complementary trace of $F$ by
	\begin{align*}
	\mt^{c}(F;-Nd^{2},h) &= \sum_{X \in \Gamma \setminus L_{-Nd^{2},h}}\bigg(\sum_{w < 0}a_{\ell_{X}}^{+}(w)(4\pi w)^{k}e^{2\pi i \Re(c(X))w} \\
	& \qquad \qquad \qquad \qquad +(-1)^{k+1}\sum_{w < 0}a_{\ell_{-X}}^{+}(w)(4\pi w)^{k}e^{2\pi i \Re(c(-X))w}\bigg).
	\end{align*}
%	By the same arguments as in \cite[Proposition 4.6.]{brfu06} we can rewrite this as
%	\begin{align*}
%	\mt^{c}(F;-Nd^{2},h) &= 2d\sum_{\substack{\ell \in \Gamma \setminus \Iso(V)}}\varepsilon_{\ell}\bigg( \delta_{\ell}(d,h)\sum_{w \in \frac{2d}{\beta_{\ell}}\Z_{<0}}a_{\ell}^{+}(w)(4\pi w)^{k}e^{2\pi i r_{+}w}\\
%	& \qquad \qquad \qquad \qquad + (-1)^{k+1}\delta_{\ell}(d,-h)\sum_{w \in \frac{2d}{\beta_{\ell}}\Z_{<0}}a_{\ell}^{+}(w)(4\pi w)^{k}e^{2\pi i r_{-}w}\bigg),
%	\end{align*}
%	where $\delta_{\ell}(d,h)$ is $1$ if there exists an $X \in L_{-Nd^{2},h}$ such that $\ell \sim X$, and $0$ otherwise, and $r_{\pm} = \Re(c(X))$ for some $X \in L_{-Nd^{2},\pm h}$ with $\ell \sim X$, if such an $X$ exists.
	
	\subsection{The Fourier expansion}
	
	We are now ready to state the Fourier expansion of the Millson theta lift.

	\begin{theorem}\label{thm:fourierexpansion}
	Let $k \in \Z_{\geq 0}$ and let $F \in H_{-2k}^{+}(\Gamma)$. For $k > 0$ the $h$-th component of $\IM(F,\tau)$ is given by
	\begin{align*}
	&\sum_{m > 0}\frac{1}{2\sqrt{m}}\left(\frac{\sqrt{N}}{4\pi \sqrt{m}}\right)^{k}\big(\mt^{+}(R_{-2k}^{k}F;m,h) + (-1)^{k+1}\mt^{-}(R_{-2k}^{k}F;m,h)\big)q^{m} \\
	&\quad +\sum_{d > 0}\frac{1}{2i\sqrt{N}d}\left(\frac{1}{4\pi i d}\right)^{k}\mt^{c}(F;-Nd^{2},h)q^{-Nd^{2}} \\
	& \quad +\frac{(-1)^{k}k!}{2\sqrt{N}\pi^{k+1}}\sum_{\substack{\ell \in \Gamma \setminus \Iso(V) \\ \ell \cap (L + h) \neq \emptyset}}a_{\ell}^{+}(0)\frac{\alpha_{\ell}}{\beta_{\ell}^{k+1}}\big(\zeta(s+1,k_{\ell}/\beta_{\ell}) + (-1)^{k+1}\zeta(s+1,1-k_{\ell}/\beta_{\ell}) \big)\big|_{s = k} \\
	&\quad-\sum_{m < 0}\frac{1}{2(4\pi|m|)^{k+1/2}}\overline{\mt(\xi_{-2k}F;m,h)}\Gamma\left(\tfrac{1}{2}-k,4\pi|m|v\right)q^{m},
	\end{align*}
	where $\zeta(s,\rho) = \sum_{n \geq 0, n+\rho \neq 0}(n+\rho)^{-s}$ is the Hurwitz zeta function, and $k_{\ell} \in \Q$ with $0 \leq k_{\ell} < \beta_{\ell}$ is defined by $\sigma_{\ell}^{-1}h_{\ell} = \left(\begin{smallmatrix}0 & k_{\ell} \\ 0 & 0 \end{smallmatrix}\right)$ for some $h_{\ell} \in \ell \cap (L+h)$.
	
	For $k = 0$ the $h$-th component of $\IM(F,\tau)$ is given by the same formula as above but with the additional non-holomorphic terms
	\begin{align*}
	&\sum_{d > 0}\frac{1}{4id\sqrt{\pi N}}\sum_{X \in \Gamma \setminus L_{-Nd^{2},h}}\big(a_{\ell_{X}}^{+}(0)-a_{\ell_{-X}}^{+}(0)\big)\Gamma\left(\tfrac{1}{2},4\pi Nd^{2}v\right)q^{-Nd^{2}}.
	\end{align*}
	\end{theorem}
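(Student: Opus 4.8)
The plan is to read off the Fourier expansion in $\tau=u+iv$ by extracting the $u$-Fourier coefficients directly from the regularized integral. Inserting the defining sum of $\Theta(\tau,z,\psi_{M,k})$ and interchanging the $u$-integration with the regularized $z$-integral, the coefficient of $e^{2\pi i m u}$ in the $h$-component becomes
\[
c_h(m,v)=\lim_{T\to\infty}\int_{M_T}F(z)\sum_{X\in L_{m,h}}v^{k+1}p_z(X)Q_X^k(\bar z)e^{-2\pi vR(X,z)}\,y^{-2k}\,d\mu(z)\,e^{-2\pi mv},
\]
so the problem decouples into one computation per index $m$ and class $h$, from which the stated $q^m$-coefficient and its $v$-profile are read off. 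To tame the unwieldy weight-$k$ kernel I would use the differential relations of Section~\ref{sec:thetafunctions} together with Theorem~\ref{thm:relationlifts}, which (via integration by parts in $z$, moving raising operators off the theta kernel onto $F$) effectively replaces $\psi_{M,k}$ paired with $F$ by the $k=0$ Millson or Kudla--Millson kernel paired with $R_{-2k}^kF$; since these $k=0$ kernels decay square-exponentially at the cusps by Proposition~\ref{prop:growththeta}, the bulk integrals converge absolutely and the regularization is confined to the boundary terms already controlled in Theorem~\ref{thm:relationlifts}. The sign of $m$ then dictates the geometry: $m>0$ gives Heegner points, $m<0$ with $|m|/N$ not a square gives closed geodesics, $m=-Nd^2$ gives infinite geodesics, and the cusps contribute separately.

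For the holomorphic coefficients with $m>0$ I would unfold the sum over $\Gamma$-orbits of $X\in L_{m,h}$ against $\overline{\Gamma}_X\backslash\h$. The reduced ($k=0$) kernel localizes the orbital integral at the CM point $D_X$ and returns the value $(R_{-2k}^kF)(D_X)$ weighted by $1/|\overline{\Gamma}_X|$, and tracking the $v$-profile through the lowering operators produces the holomorphic term with the explicit constant $\tfrac{1}{2\sqrt m}(\sqrt N/4\pi\sqrt m)^k$. The splitting into $\mt^{+}$ and $\mt^{-}$ tracks the condition $x_1\geq 0$ defining $L^{\pm}_{m,h}$ through the odd factor $p_z(X)$, and replacing $X$ by $-X$ yields the parity factor $(-1)^{k+1}$. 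The square-index holomorphic coefficients $\mt^{c}(F;-Nd^2,h)$ and the Hurwitz zeta terms involving $a_\ell^+(0)$ arise instead from the non-compact part of the computation: along the infinite geodesics $c(X)$ and in the neighborhoods of the cusps the truncated integral picks up contributions governed by the growth of the theta function in Proposition~\ref{prop:growththeta}, which are controlled by the cusp expansions $F_\ell=F|_{-2k}\sigma_\ell$ and produce the complementary trace and, after carrying out the $y$-integral explicitly, the Hurwitz zeta values $\zeta(s+1,\cdot)|_{s=k}$.

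For the non-holomorphic coefficients in the last line I would not unfold directly but instead use Proposition~\ref{prop:xidiagram}: since $\xi_{1/2-k,\tau}$ determines the non-holomorphic part of a harmonic weak Maass form and $\xi_{1/2-k,\tau}\IM(F,\tau)=-\tfrac{1}{2\sqrt N}\ISh(\xi_{-2k,z}F,\tau)$, it suffices to compute the Fourier expansion of the Shintani lift of the cusp form $\xi_{-2k}F\in S_{2k+2}(\Gamma)$. Unfolding the Shintani integral against the geodesics $c(X)$ produces exactly the traces $\mt(\xi_{-2k}F;m,h)$ of the cycle integrals $\mathcal{C}(\xi_{-2k}F,X)$, and inverting $\xi$ on Fourier coefficients then yields the terms $\overline{\mt(\xi_{-2k}F;m,h)}\,\Gamma(\tfrac12-k,4\pi|m|v)q^{m}$. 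For $k=0$ the extra unary theta term in Proposition~\ref{prop:xidiagram} accounts precisely for the additional non-holomorphic contributions at the square indices $-Nd^2$.

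The main obstacle, as the authors signal, is the regularization bookkeeping rather than any single conceptual step: one must justify the interchange of the limit $T\to\infty$, the $u$-integration and the theta sum, and then isolate the contributions of the non-compact geodesics and of the constant terms at the cusps from the bulk. Getting the square-index coefficients $\mt^{c}(F;-Nd^2,h)$ and the Hurwitz zeta terms right—with the correct orientations of the infinite geodesics, the $X\mapsto -X$ symmetry, and the precise constants emerging as $T\to\infty$—is where the delicate part of the work lies, and it is also the place where the $k=0$ case genuinely differs from $k>0$.
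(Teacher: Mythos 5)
Your outline for the coefficients of positive index follows the paper: both use Theorem~\ref{thm:relationlifts} to replace $\psi_{M,k}$ paired with $F$ by the $k=0$ kernel paired with $R_{-2k}^{k}F$, unfold over $\Gamma\backslash L_{m,h}^{\pm}$, and evaluate the resulting orbital integral at the CM point (the paper does this via the Katok--Sarnak method, a spherical-function/Laplace-transform computation with Legendre polynomials and Whittaker functions, which is the content behind your phrase ``localizes at $D_X$''). Your route to the \emph{non-holomorphic} coefficients is genuinely different from the paper's and is legitimate: since $\IM(F,\tau)$ is harmonic with a Fourier expansion, each coefficient splits uniquely into the two solutions of the relevant ODE in $v$, so the non-holomorphic part is determined by $\xi_{1/2-k,\tau}\IM(F,\tau)=-\tfrac{1}{2\sqrt N}\ISh(\xi_{-2k}F,\tau)$ together with the (easier, unfolded) Fourier expansion of the Shintani lift; the paper instead computes these coefficients directly, by exhibiting an explicit $\xi_{2k+2,z}$-preimage $\eta(X,\tau,z)$ of the Millson kernel and applying Stokes' theorem in $z$. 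Your route buys a cheaper derivation of the last line of the theorem (and, for $k=0$, of the extra terms at $-Nd^{2}$ via the unary theta series in Proposition~\ref{prop:xidiagram}), at the cost of having to verify the normalization of the Shintani expansion independently.

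The genuine gap is in the holomorphic coefficients of index $m\le 0$: the complementary traces $\mt^{c}(F;-Nd^{2},h)$ and the Hurwitz zeta constant term. These live in the \emph{holomorphic} part of $\IM(F,\tau)$, so they are invisible to the $\xi$-operator, and your only account of them is that they ``arise from the non-compact part of the computation'' near the cusps and infinite geodesics. That is not a method. In the paper these terms come from a delicate analysis: for $m=-Nd^{2}$ one must apply Stokes to the $\Gamma_X\backslash\Gamma$-orbit of a single vector $X$, identify which boundary components survive (only $\ell=\ell_{\pm X}$), Poisson-summate over $\sigma_\ell^{-1}\overline{\Gamma}_\ell\sigma_\ell$, and evaluate an explicit Fourier transform of $(t+iy)^{-k-1}e^{-a^2t^2}$ in terms of $\erfc$ and Hermite polynomials before taking $T\to\infty$; for $m=0$ one must parametrize $L_{0,h}\setminus\{0\}$ by isotropic lines, observe that the bulk integral vanishes because $\xi_{-2k}F$ is cuspidal, and extract the limit $\lim_{T\to\infty}\sum_{n}(n\beta_\ell+k_\ell)^{-k-1}e^{-\pi vN(n\beta_\ell+k_\ell)^2/T^2}$, which is where the Hurwitz zeta values (and the conditionally convergent cotangent sum for $k=0$) appear. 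None of this is recoverable from Proposition~\ref{prop:growththeta} alone, and without it the second and third lines of the stated expansion --- precisely the terms that make the principal part finite and complete the proof of Theorem~\ref{thm:PropertiesLifts} --- are not established. You would need to supply this boundary computation (or an equivalent direct unfolding for $Q(X)\le 0$) to close the argument.
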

	
	The Hurwitz zeta function $\zeta(s,\rho)$ is holomorphic for $\Re(s) > 1$ and has a simple pole at $s = 1$ with residue $1$ and constant term $-\psi(\rho)$, where $\psi(0) = -\gamma$ and $\psi(\rho) = \frac{\Gamma'(\rho)}{\Gamma(\rho)}$ is the digamma function if $\rho > 0$. Note that $k_{\ell} = 0$ is equivalent to $h \in L$. Thus for $k > 0$ we can simply plug in $s = k$ in the third line of the theorem, and for $k = 0$ we get
	\[
	\big(\zeta(s+1,k_{\ell}/\beta_{\ell}) -\zeta(s+1,1-k_{\ell}/\beta_{\ell}) \big)\big|_{s = 0} = \psi(1-k_{\ell}/\beta_{\ell})-\psi(k_{\ell}/\beta_{\ell}) = \begin{cases}
	0, & \text{$h \in L$,} \\
	\pi \cot(\pi k_{\ell}/\beta_{\ell}), & \text{$h \notin L$.}
\end{cases}
	\]
	
	Note that the first three lines in Theorem \ref{thm:fourierexpansion} are the holomorphic part of $\IM(F,\tau)$, whereas the fourth line and the additional terms (for $k = 0$) are the non-holomorphic part of $\IM(F,\tau)$. The alternative form of the complementary trace given in \cite[Proposition 4.6.]{brfu06} shows that the principal part of $\IM(F,\tau)$ is finite. In particular, this completes the proof of Theorem \ref{thm:PropertiesLifts}.

	For the sake of completeness we also state the Fourier expansion of the Shintani lift in our normalization.

	\begin{theorem}
		Let $k \in \Z_{\geq 0}$ and $G \in S_{2k+2}(\Gamma)$. Then the $h$-th component of $\ISh(G,\tau)$ is given by
		\[
		\ISh(G,\tau)_{h} = -\sqrt{N}\sum_{m > 0}\mt(G;-m,h)q^{m}.
		\]
	\end{theorem}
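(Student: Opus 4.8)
The plan is to compute the Fourier expansion of $\ISh(G,\tau)$ directly by the standard unfolding technique for theta lifts, following the template established for the Millson lift in Section~\ref{sec:fourexp}. Since $G \in S_{2k+2}(\Gamma)$ is a cusp form and decays rapidly at all cusps, no regularization is needed and Stokes/boundary terms will not appear. First I would write out the defining integral
\[
\ISh(G,\tau) = \int_{M}G(z)\overline{\Theta(\tau,z,\varphi_{Sh,k})}y^{2k+2}d\mu(z),
\]
unfold the theta sum over $L_{m,h}$, and separate the contributions by the sign of $Q(X)=m$. The conceptual heart is that $\varphi_{Sh,k}$ contains the factor $Q_{X}^{k+1}(\bar z)e^{-2\pi v R(X,z)}e^{2\pi i Q(X)\tau}$, so after complex conjugation the $\tau$-dependence of each term is $e^{-2\pi i \overline{Q(X)}\bar\tau}=e(m\bar\tau)$-type, producing a $q^{m}$ for each discriminant $m$; the Gaussian in $z$ localizes the $z$-integral and, upon unfolding against the $\Gamma$-orbits, reassembles the geodesic cycle integrals $\mathcal{C}(G,X)$.

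The key steps, in order, are as follows. First, interchange the sum and integral and collapse the sum over $X\in h+L$ with $Q(X)=m$ against the quotient by $\Gamma$, using that $G$ is $\Gamma$-invariant of weight $2k+2$ and that $Q_X^{k+1}(\bar z)$, $R(X,z)$ transform equivariantly under $\Gamma$ (the rules in \eqref{zTransformationRules}). This replaces the integral over $M$ by a sum over $X\in\Gamma\setminus L_{m,h}$ of integrals over $c(X)=\Gamma_X\setminus c_X$ (for $m<0$), or over Heegner-point neighborhoods (for $m>0$). Second, since $G$ is holomorphic the terms with $m>0$ and $m=0$ must vanish — the $m=0$ piece because $G$ is cuspidal, and the $m>0$ piece because, after carrying out the $z$-integral over the two-dimensional fundamental domain against the holomorphic $G$, the integrand is (essentially) a total $\bar\partial$-derivative whose integral vanishes. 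Third, for $m<0$, parametrize $c(X)$ by the matrix $g$ with $g^{-1}X=\sqrt{|m|/N}\left(\begin{smallmatrix}1&0\\0&-1\end{smallmatrix}\right)$ exactly as in the definition of $\mathcal{C}(G,X)$, perform the transverse Gaussian integral (the integral in the direction orthogonal to the geodesic, which converges because $R(X,z)$ grows away from $c_X$), and identify the remaining integral along the geodesic with $\mathcal{C}(G,X)=\int_{c(X)}G(z)Q_X^k(z)\,dz$. Summing over $X\in\Gamma\setminus L_{m,h}$ yields $\mt(G;m,h)$, and collecting the normalization constants (the powers of $\sqrt N$, $v^{1/2}$, and $Q_X^{k+1}$ versus $Q_X^{k}$) produces the factor $-\sqrt N$ and the sign in the exponent, giving the stated $-\sqrt{N}\sum_{m>0}\mt(G;-m,h)q^{m}$.

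The main obstacle I anticipate is the transverse Gaussian integration in the third step: one must separate $\overline{Q_X^{k+1}(\bar z)}=Q_X^{k+1}(z)$ into a factor that survives as $Q_X^k(z)\,dz$ along the geodesic and a factor that, together with $\overline{p_z(X)}$ or the extra power of $Q_X$, is absorbed by integrating the Gaussian $e^{-2\pi v R(X,z)}$ against the transverse coordinate. Getting the precise constant right — reconciling the $v^{1/2}$ in $\varphi_{Sh,k}$, the Jacobian $d\mu(z)=y^{-2}dx\,dy$ versus the line element $dz$ on $c(X)$, and the relation $Q_X(z)\overline{Q_X(z)}=2Ny^2 R(X,z)$ from \eqref{eq:diffpQR} — is where the bookkeeping is delicate, and it is also where the factor $\sqrt N$ and the exponent convention $\mt(G;-m,h)$ versus $\mt(G;m,h)$ must be pinned down. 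All of this is a special (cuspidal, unregularized) case of the computations already carried out for $\IM(F,\tau)$, so I would lean heavily on the analogous manipulations there and on the references \cite{hoevel,brikavia,crawford} where the Shintani kernel is treated, rather than redoing the analysis from scratch.
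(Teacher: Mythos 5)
Your proposal is correct and follows essentially the same route as the paper, which proves this theorem only by remarking that the expansion "can be computed using very similar, but much easier calculations" than those for $\IM(F,\tau)$ in Section~\ref{sec:fourexp}; your outline is precisely that specialization (unfolding, vanishing of the positive-length and isotropic contributions by holomorphy and cuspidality of $G$, and the transverse Gaussian integration along the conjugated geodesic, with the $t$-independence of the contour integral $\int_1^{\varepsilon^2}G_g(y(t+i))(t+i)^{k+1}y^k\,dy$ doing the work of reassembling $\mathcal{C}(G,X)$). The constants indeed collapse to $-\sqrt{N}$ exactly as you describe, so no further comment is needed.
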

	
	\begin{proof}
%		Since $\xi_{-2k,z}$ is surjective, we can choose some $F \in H_{-2k}^{+}(\Gamma)$ with $\xi_{-2k,z}F = G$. The formula then follows from Theorem \ref{thm:fourierexpansion} combined with Proposition \ref{prop:xidiagram} and the explicit action of $\xi_{1/2-k,\tau}$ on the Fourier coefficients. Alternatively, t
		The Fourier expansion of $\ISh(G,\tau)$ can be computed using very similar, but much easier calculations as in the proof of Theorem \ref{thm:fourierexpansion} below.
	\end{proof}
	
	\subsection{Fourier coefficients of positive index}
	To compute the coefficients of positive index $m > 0$ we use the relation between $I^{M}(F,\tau)$ and $\Lambda^{M}(F,\tau)$. In the case that $k$ is odd, these coefficients were already computed in \cite{brono2,alfes}. In the case that $k$ is even, the $(m,h)$-th coefficient of $\Lambda^{M}(F,\tau)$ is given by
	\[
		C(m,h)=\sum_{X \in L_{m,h}}\int_M R_{-2k,z}^kF(z) \psi_{M}^0(X,\tau,z)d\mu(z),
	\]
	where $ \psi_{M}^0(X,\tau,z)=vp_z(X)e^{-2\pi v R(X,z)}$. For $X = \left(\begin{smallmatrix}x_{2} & x_{1} \\ x_{3} & -x_{2}\end{smallmatrix} \right) \in L_{m,h}$ and $m > 0$ we have $x_{3} \neq 0$ and
	\[
	-2\pi v R(X,z) = 2\pi v (X,X) - \pi v\left(\frac{N(x_{3}x-x_{1})^{2}+q(X)}{\sqrt{N}x_{3}y} + \sqrt{N}x_{3}y \right)^{2},
	\]
	which implies that $\psi_{M}^{0}(X,\tau,z)$ is of square-exponential decay in all directions of $\h$. Thus the integral in $C(m,h)$ actually converges without regularisation. By the usual unfolding argument we obtain
	\begin{align*}
		C(m,h)&= \sum_{X\in\G\setminus L^+_{m,h}\cup L^-_{m,h}}\frac{1}{\abs{\overline\G_X}} \int_M R_{-2k,z}^k F(z)\psi_{M}^0(X,\tau, z)d\mu(z)\\
		&=\sum_{X\in\G\setminus L^+_{m,h}}\frac{1}{\abs{\overline\G_X}} \int_M R_{-2k,z}^k F(z)\psi_{M}^0(X,\tau, z)d\mu(z)\\
		&\quad-\sum_{X\in\G\setminus L^-_{m,h}} \frac{1}{\abs{\overline\G_{-X}}} \int_M R_{-2k,z}^k F(z)\psi_M^0(-X,\tau, z)d\mu(z).
\end{align*}
Following Katok and Sarnak \cite{ks} we rewrite this as an integral over $\SL_2(\R)$ and obtain that $C(m,h)$ equals
	\[
		\int_{\SL_2(\R)}R_{-2k,z}^k F(gi) \left(\sum_{X\in\G\setminus L^+_{m,h}}\frac{1}{\abs{\overline\G_X}} \psi_{M}^0(X,\tau, gi)-\sum_{X\in\G\setminus L^-_{m,h}} \frac{1}{\abs{\overline\G_{-X}}}\psi_M^0(-X,\tau, gi) \right)dg.
	\]
Here, we normalize the Haar measure such that the maximal compact open subgroup has volume $1$.

Since the group $\SL_2(\R)$ acts transitively on $L^+_{m,h}$, there is a $g_1\in \mathrm{SL}_2(\R)$ such that $g_1^{-1}.X=\sqrt{2m} X_{1}(i)$ for $X\in L_{m,h}^+$. Also, there is a $g_1\in \mathrm{SL}_2(\R)$ such that $g_1^{-1}.(-X)=\sqrt{2m} X_{1}(i)$ for $X\in L_{m,h}^-$. We then have 
\begin{align*}
C(m,h)&=\sum_{X\in\G\setminus L^+_{m,h}} \frac{1}{\abs{\overline\G_X}} \int_{\SL_{2}(\R)} R_{-2k,z}^k F(g_1gi)\psi_{M}^0\left(\sqrt{2m}g^{-1}.X_{1}(i),\tau,i\right)dg\\
&\quad-\sum_{X\in\G\setminus L^-_{m,h}} \frac{1}{\abs{\overline\G_{-X}}} \int_{\SL_{2}(\R)}R_{-2k,z}^k F(g_1gi)\psi_{M}^0\left(\sqrt{2m}g^{-1}.X_{1}(i),\tau,i\right)dg.
\end{align*}
Then, $g_1 i$ is the Heegner point corresponding to $D_X$. 

Using the Cartan decomposition of $\SL_2(\R)$ we find 
\begin{align*}
C(m,h)=\sum_{X\in\G\setminus L^+_{m,h}} \frac{1}{\abs{\overline\G_X}} R_{-2k,z}^k F(D_X) Y_c(\sqrt{m})-\sum_{X\in\G\setminus L^-_{m,h}} \frac{1}{\abs{\overline\G_{-X}}} R_{-2k,z}^k F(D_{-X}) Y_c(\sqrt{m}),
\end{align*}
with
\[
Y_c(t)=4\pi v \int_1^\infty \psi_{M}^0(\sqrt{2}t\alpha(a)^{-1}.X_{1}(i),\tau,i) \omega_c(\alpha(a)) \frac{a^2-a^{-2}}{2}\frac{da}{a}.
\]
Here, $\omega_c(\alpha(a))=\omega_c\left(\frac{a^2+a^{-2}}{2}\right)$ is the spherical function of eigenvalue $c=-k(k+1)$ given by the Legendre polynomial $P_k(x)$ and $\alpha(a)=\left(\begin{smallmatrix} a&0\\0&a^{-1}\end{smallmatrix}\right)$. By substituting $a=e^{r/2}$ we obtain
\[
Y_c(t)=4\pi v t\int_0^\infty \cosh(r)\sinh(r)P_k(\cosh(r))e^{-4\pi v t^2\sinh(r)^2}dr.
\]
Setting $x=\sinh(r)^2$ we get
\[
Y_c(t)=2\pi v t \int_0^\infty P_k(\sqrt{1+x}) e^{-4\pi v t^2 x}dx.
\]
This is a Laplace transformation computed in equation (7) on page 180 in \cite{tables} and we obtain
\[
 Y_c(\sqrt{m}) e^{2\pi i m \tau} = \frac{1}{2\sqrt{m}} \mathcal{W}_{\frac{k}{2}+\frac34,\frac12}(4\pi m v) e(mx),
\]
where $\mathcal{W}_{s,k}(y)=y^{-k/2} W_{k/2,s-1/2}(y)$ ($y>0$) is the $\mathcal{W}$-Whittaker function.
Using (13.1.33) and (13.4.23) in \cite{Pocket} it is easy to show that
\begin{align*}
&L_{1/2}^{k/2} \left(\mathcal{W}_{\frac{k}{2}+\frac34,\frac12}(4\pi m v) e(mx)\right)\\
&= \left(\frac{1}{4\pi m}\right)^{k/2}\prod_{j=0}^{k/2-1} \left(\frac{k+1}{2}+j\right)\left(j-\frac{k}{2}\right)  \mathcal{W}_{\frac{k}{2}+\frac34,\frac12-k}(4\pi m v) e(mx)
\\
&= \left(\frac{1}{4\pi m}\right)^{k/2}\prod_{j=0}^{k/2-1} \left(\frac{k+1}{2}+j\right)\left(j-\frac{k}{2}\right)  e^{2\pi i m \tau}.
\end{align*}
Therefore, we have that 
\[
C(m,h)=  \frac{1}{2\sqrt{m}}\left(\frac{1}{4\pi m}\right)^{k/2}\prod_{j=0}^{k/2-1} \left(\frac{k+1}{2}+j\right)\left(j-\frac{k}{2}\right)  \left(\mt^+(F;m,h)-\mt^-(F;m,h) \right).
\]
Combining this with Theorem \ref{thm:relationlifts} we obtain the formula for the coefficients of positive index.

	\subsection{Fourier coefficients of negative index}

	For $m < 0$ the $(m,h)$-th coefficient of $I^{M}(F,\tau)$ is given by
	\[
	C(m,h) = \sum_{X \in \Gamma \setminus L_{m,h}}\lim_{T \to \infty}\int_{M_{T}}F(z)\sum_{\gamma \in \Gamma_{X} \setminus \Gamma}\psi_{M,k}^{0}(\gamma X,\tau,z)y^{-2k}d\mu(z),
	\]
	where
	\[
	\psi_{M,k}^{0}(X,\tau,z) = v^{k+1}p_{z}(X)Q_{X}^{k}(\bar{z})e^{-2\pi v R(X,z)}.
	\]
	We compute the individual summands for fixed $X \in L_{m,h}$. 
	
	The computation follows similar arguments as in the proof of Theorem 4.5 in \cite{brfu06}. First, a short calulculation using the rules~\eqref{eq:diffpQR} shows that the function
	\begin{equation}\label{eq:eta}
	\eta(X,\tau,z) = C_{k}v^{k+1}Q_{X}^{-k-1}(z)\frac{\partial^{k}}{\partial v^{k}}\left(v^{-1}e^{-2\pi v R(X,z)}\right), \qquad C_{k} = \frac{\sqrt{N}(2N)^{k}}{(-2\pi)^{k+1}},
	\end{equation}
	satisfies
	\[
	\xi_{2k+2,z}\eta(X,\tau,z) = \overline{\psi_{M,k}^{0}(X,\tau,z)}.
	\]
	Using Stokes' Theorem in the form given in \cite[Lemma 2.1.]{brikavia}, we obtain
	\begin{align}
	&\lim_{T \to \infty}\int_{M_{T}}F(z)\sum_{\gamma \in \Gamma_{X} \setminus \Gamma}\psi_{M,k}^{0}(\gamma X,\tau,z)y^{-2k}d\mu(z) \\
	&= -\lim_{T \to \infty}\int_{M_{T}}\overline{\xi_{-2k,z}F(z)}\sum_{\gamma \in \Gamma_{X} \setminus\Gamma}\eta(\gamma X,\tau,z)y^{2k+2}d\mu(z) \label{eq:negativecoefficienttwodimensional} \\
	&  \quad -\lim_{T\to \infty}\int_{\partial M_{T}}F(z)\sum_{\gamma \in \Gamma_{X}\setminus\Gamma}\eta(\gamma X,\tau,z)dz.\label{eq:boundaryintegral}
	\end{align}
	Since $\xi_{-2k,z}F$ is a cusp form, we can write the limit of the first integral on the right-hand side as an integral over $M$.
	
	\subsubsection{The integral over $M$}
	
	We first compute the complex conjugate of the integral over $M$ on the right-hand side. Since $Q(X) = m < 0$ we can find some matrix $g \in \SL_{2}(\R)$ such that
	\[
	X' := g^{-1}.X = \sqrt{\frac{|m|}{N}}\begin{pmatrix}1 & 0 \\ 0 & -1 \end{pmatrix}.
	\]
	Replacing $z$ by $gz$ and using the unfolding argument, we find
	\begin{align*}
	-\int_{M}\xi_{-2k,z}F(z)\overline{\sum_{\gamma \in \Gamma_{X} \setminus \Gamma}\eta(\gamma X,\tau,z)}y^{2k+2}d\mu(z) = -\int_{\Gamma_{X'}\setminus \h }\xi_{-2k,z}F_{g}(z)\overline{\eta(X',\tau,z)}y^{2k+2}d\mu(z),
	\end{align*}
	where $F_{g} = F|_{-2k}g$. 
	
	If $|m|/N$ is not a square then $\overline{\Gamma}_{X}$ is infinite cyclic and
	\[
	\Gamma_{X'} = g^{-1}\Gamma_{X}g = \left\{\pm\begin{pmatrix}\varepsilon & 0 \\0 & \varepsilon^{-1} \end{pmatrix}^{n}: n\in \Z\right\}
	\]
	for some $\varepsilon > 1$. On the other hand, if $|m|/N$ is a square then $\overline{\Gamma}_{X'}$ is trivial, so $\Gamma_{X'}\setminus \h = \h$. 
	Here we only consider the non-square case since the other case is very similar. As a fundamental domain for $\Gamma_{X'}\setminus \h$ we can take the horizontal strip $\{z \in \h: 1 \leq y < \varepsilon^{2}\}$. Using the explicit formula
	\begin{align}\label{eq:etaexplicitnegative}
	\overline{\eta(X',\tau,z)} = C_{k}v^{k+1}\left( -2\sqrt{|m|N}\bar{z} \right)^{-k-1}\frac{\partial^{k}}{\partial v^{k}}\left(v^{-1}e^{-4\pi |m|v\left(\frac{x^{2}}{y^{2}}+1\right)}\right),
	\end{align} 
	and replacing $\frac{x}{y}$ by $t$ in the integral over $x$, we find that the complex conjugate of (\ref{eq:negativecoefficienttwodimensional}) equals
	\begin{align*}
	\frac{-C_{k}v^{k+1}}{( -2\sqrt{|m|N})^{k+1}}\frac{\partial^{k}}{\partial v^{k}}v^{-1}\int_{-\infty}^{\infty}\left(\int_{1}^{\varepsilon^{2}}(t+i)^{k+1}y^{k}\xi_{-2k,z}F_{g}(y(t+i)) dy\right) \frac{e^{-4\pi |m|v\left(t^{2}+1\right)}}{(t^{2}+1)^{k+1}}  dt.
	\end{align*}
	The inner integral is the contour integral of the holomorphic function $z^{k}\xi_{-2k,z}F_{g}(z)$ along the line $y(t+i), y \in (1,\varepsilon^{2})$. 
	Using $\xi_{-2k,z}F_{g}(\varepsilon^{2}z)=\varepsilon^{-2k-2}\xi_{-2k,z}F_{g}(z)$ it is easily seen by Cauchy's Theorem that the inner integral does in fact not depend on $t$. Thus the double integral simplifies to
	\[
	\frac{-C_{k}v^{k+1}i^{k+1}}{( -2\sqrt{|m|N})^{k+1}}\left(\int_{1}^{\varepsilon^{2}}\xi_{-2k,z}F_{g}(iy)y^{k}dy\right)\frac{\partial^{k}}{\partial v^{k}}v^{-1} \int_{-\infty}^{\infty}\frac{e^{-4\pi |m|v\left(t^{2}+1\right)}}{(t^{2}+1)^{k+1}}  dt.
	\]
%	
%	If $|m|/N$ is a square, we have $\Gamma_{X'}\setminus\h = \h$, and very similar arguments as before give
%	\[
%	\frac{-C_{k}v^{k+1}i^{k+1}}{( -2\sqrt{|m|N})^{k+1}}\left(\int_{0}^{\infty}\xi_{-2k,z}F_{g}(iy)y^{k}dy\right)\frac{\partial^{k}}{\partial v^{k}}v^{-1} \int_{-\infty}^{\infty}\frac{e^{-4\pi |m|v\left(t^{2}+1\right)}}{(t^{2}+1)^{k+1}}  dt
%	\]
%	in this case.
	
	It remains to compute the derivative of the last integral. If we replace $t^{2}$ by $u$ we see that the integral is equal to
	\[
	\int_{0}^{\infty}u^{-1/2}(u+1)^{-k-1}e^{-4\pi |m|v(u+1)}du = \Gamma(\tfrac{1}{2})e^{-4\pi |m|v}U\left(\tfrac{1}{2},\tfrac{1}{2}-k,4\pi|m|v\right)
	\]
	with Kummer's function $U(a,b,z)$, see \cite[13.2.5]{Pocket}. The derivative will be computed in the following, slightly more general lemma.
	
	\begin{lemma}
		For $k,\ell \in \Z, k \geq 0,$ we have
		\[
		\frac{\partial^{k}}{\partial v^{k}}v^{-1-\ell}e^{-v}U\left(\tfrac{1}{2},\tfrac{1}{2}-k-\ell,v \right) = (-1)^{k}v^{-1-k-\ell}e^{-v}U\left(\tfrac{1}{2}-k,\tfrac{1}{2}-k-\ell,v \right).
		\]
	\end{lemma}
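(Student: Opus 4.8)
The plan is to prove the identity by induction on $k$, keeping $\ell \in \Z$ as a free parameter so that the inductive hypothesis is available for every shift of $\ell$ simultaneously. Throughout I would use three standard facts about Kummer's function $U$ (all found in \cite[Ch.~13]{Pocket}), namely the derivative formula
\[
\frac{\partial}{\partial v}U(a,b,v) = U(a,b,v) - U(a,b+1,v),
\]
which follows from $U'(a,b,v)=-a\,U(a+1,b+1,v)$ together with the contiguous relation $U(a,b,v)-U(a,b+1,v)+a\,U(a+1,b+1,v)=0$, and the second contiguous relation
\[
U(a-1,b,v) = v\,U(a,b+1,v) + (a-b)\,U(a,b,v).
\]
Since these hold as identities in the parameters, there is no difficulty with the first argument $\tfrac12-k$ becoming negative.

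First I would record the effect of a single $v$-derivative on a product of the relevant shape. Combining the derivative formula with the product rule gives, for all $a,b,c$,
\[
\frac{\partial}{\partial v}\bigl(v^{c}e^{-v}U(a,b,v)\bigr) = v^{c-1}e^{-v}\bigl(c\,U(a,b,v) - v\,U(a,b+1,v)\bigr).
\]
Writing $\Psi_k^{(\ell)}(v) = v^{-1-\ell}e^{-v}U(\tfrac12,\tfrac12-k-\ell,v)$ for the integrand appearing in the lemma and applying the displayed formula with $a=\tfrac12$, $b=\tfrac12-k-\ell$, $c=-1-\ell$, both resulting $U$-terms are again of this form with $k$ lowered by one, and one obtains the recurrence
\[
\frac{\partial}{\partial v}\Psi_k^{(\ell)} = -(1+\ell)\,\Psi_{k-1}^{(\ell+1)} - \Psi_{k-1}^{(\ell)}.
\]
This recurrence is the engine of the induction.

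For the inductive step I set $\Phi_k^{(\ell)}(v) = v^{-1-k-\ell}e^{-v}U(\tfrac12-k,\tfrac12-k-\ell,v)$, so that the claim becomes $\partial_v^{\,k}\Psi_k^{(\ell)} = (-1)^k\Phi_k^{(\ell)}$; the base case $k=0$ is immediate. Assuming the statement for $k-1$ and every $\ell$, I apply $\partial_v^{\,k-1}$ to the recurrence and invoke the hypothesis on each term to get
\[
\frac{\partial^{k}}{\partial v^{k}}\Psi_k^{(\ell)} = (-1)^{k-1}\bigl(-(1+\ell)\,\Phi_{k-1}^{(\ell+1)} - \Phi_{k-1}^{(\ell)}\bigr).
\]
It then remains to verify the purely algebraic identity $-(1+\ell)\Phi_{k-1}^{(\ell+1)}-\Phi_{k-1}^{(\ell)}=-\Phi_k^{(\ell)}$. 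Cancelling the common factor $v^{-1-k-\ell}e^{-v}$ and writing $A=\tfrac32-k$, $B=\tfrac12-k-\ell$, so that $1+\ell=A-B$, this is exactly the second contiguous relation $U(A-1,B,v)=v\,U(A,B+1,v)+(A-B)\,U(A,B,v)$ recorded above. This closes the induction and proves the lemma.

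The main obstacle is bookkeeping rather than analysis: one must carry the free parameter $\ell$ (which the first derivative shifts) through the induction, and recognise that the $\ell$-dependent coefficient $-(1+\ell)$ produced by the product rule is precisely the coefficient $A-B$ demanded by the contiguous relation, so that the two $U$-terms collapse into a single one with first parameter lowered by $1$ and with no spurious constant — this is what makes the overall factor the clean $(-1)^k$. A secondary point to check is simply that the elementary $U$-identities may legitimately be applied with the non-positive parameters $\tfrac12-k$ that occur, which is fine since they are identities in the parameters.
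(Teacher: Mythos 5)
Your proof is correct and follows essentially the same route as the paper: induction on $k$ with $\ell$ kept as a free parameter, a single product-rule differentiation using the $U'$ recurrence to produce the two-term recursion with coefficients $-(1+\ell)$ and $-1$, and then the contiguous relation $U(a-1,b,v)=vU(a,b+1,v)+(a-b)U(a,b,v)$ (the paper's \cite[13.4.18]{Pocket}) to recombine the two terms into a single $U$ with first parameter lowered by one. The only difference is notational packaging via $\Psi_k^{(\ell)}$ and $\Phi_k^{(\ell)}$.
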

	
	\begin{proof}
		By induction on $k$. The case $k = 0$ is clear. Suppose that the claim holds for some fixed $k$ and all $\ell$. Computing the innermost derivative by the product rule and using the recurrence relation \cite[13.4.25]{Pocket} on the $U'$ summand we get
		\begin{align*}
		&\frac{\partial^{k+1}}{\partial v^{k+1}}v^{-1-\ell}e^{-v}U\left(\tfrac{1}{2},\tfrac{1}{2}-(k+1)-\ell,v \right) \\
		&= -\frac{\partial^{k}}{\partial v^{k}}\bigg((1+\ell)v^{-1-(\ell+1)}e^{-v}U\left(\tfrac{1}{2},\tfrac{1}{2}-k-(\ell+1),v \right)+v^{-1-\ell}e^{-v}U\left(\tfrac{1}{2},\tfrac{1}{2}-k-\ell,v \right)\bigg).
		\end{align*}
		If we apply the induction hypothesis on both summands, for $\ell+1$ and $\ell$, the last line equals
		\begin{align*}
		(-1)^{k+1}v^{-1-(k+1)-\ell}e^{-v}\left((1+\ell)U\left(\tfrac{1}{2}-k,\tfrac{1}{2}-k-(\ell+1),v \right) + v U\left(\tfrac{1}{2}-k,\tfrac{1}{2}-k-\ell,v \right)\right),
		\end{align*}
		which by \cite[13.4.18]{Pocket} is the same as
		\[
		(-1)^{k+1}v^{-1-(k+1)-\ell}e^{-v}U\left(\tfrac{1}{2}-(k+1),\tfrac{1}{2}-(k+1)-\ell,v \right).
		\]
		Since $\ell$ was arbitrary, this completes the induction.
	\end{proof}

	For $\ell = 0$ the lemma gives
	\begin{align*}
	v^{k+1}\frac{\partial^{k}}{\partial v^{k}}v^{-1}\Gamma(\tfrac{1}{2})e^{-4\pi |m|v}U\left(\tfrac{1}{2},\tfrac{1}{2}-k,4\pi|m|v \right) &= (-1)^{k}\sqrt{\pi}e^{-4\pi|m|v}U\left(\tfrac{1}{2}-k,\tfrac{1}{2}-k,4\pi|m|v \right).
	\end{align*}
	By \cite[13.6.28.]{mr0167642}, the right-hand side is $(-1)^{k}\sqrt{\pi}\Gamma\left(\tfrac{1}{2}+k,4\pi|m|v\right)$. If we put everything together and recall the definition of the cycle integral of $\mathcal{C}(\xi_{-2k,z}F,X)$, we see that (\ref{eq:negativecoefficienttwodimensional}) equals
%	\begin{align*}
%	& \frac{-C_{k}i^{k+1}(-1)^{k}\sqrt{\pi}}{( -2\sqrt{|m|N})^{k+1}}\left(\int_{1}^{\varepsilon^{2}}\xi_{-2k,z}F_{g}(iy)y^{k}dy\right)\Gamma\left(\tfrac{1}{2}+k,4\pi|m|v\right),
%	\end{align*}
%	if $|m|/N$ is not a square 
%	and
%	\begin{align*}
%	& \frac{-C_{k}i^{k+1}(-1)^{k}\sqrt{\pi}}{( -2\sqrt{|m|N})^{k+1}}\left(\int_{0}^{\infty}\xi_{-2k}F_{g}(iy)y^{k}dy\right)\Gamma\left(\tfrac{1}{2}+k,4\pi|m|v\right),
%	\end{align*}
%	if $|m|/N$ is a square. Recall that $g.iy$ parametrises the cycle $c(X)$, where $y \in (1,\varepsilon^{2})$ if $|m|/N$ is not a square and $y \in (0,\infty)$ if $|m|/N$ is a square. We obtain
	\begin{align*}
	-\frac{1}{2(4\pi|m|)^{k+1/2}}\overline{\mathcal{C}(\xi_{-2k,z}F,X)} \Gamma(\tfrac{1}{2}+k,4\pi |m|v).
	\end{align*}

	\subsubsection{The boundary integral}

	We now consider the limit of the boundary integral in~\eqref{eq:boundaryintegral}. By the definition of the truncated curve $M_{T}$ we find
	\begin{align*}
	-\int_{\partial M_{T}}F(z)\sum_{\gamma \in \Gamma_{X}\setminus\Gamma}\eta(\gamma X,\tau,z)dz = \sum_{\ell \in \Gamma \setminus \Iso(V)}\int_{z = iT}^{\alpha_{\ell}+iT}F_{\ell}(z)\sum_{\gamma \in \Gamma_{X} \setminus\Gamma}\eta(\sigma_{\ell}^{-1}\gamma X,\tau,z)dz,
	\end{align*}
	where $F_{\ell} = F|_{-2k}\sigma_{\ell}$. As in the proof of Lemma 5.2 in \cite{brfu06} we see that for each isotropic line $\ell$ the integral vanishes in the limit unless $X$ is orthogonal to $\ell$ and $\gamma \in \Gamma_{\ell}$, which can only happen if $|m|/N$ is a square and $\ell = \ell_{X}$ or $\ell= \ell_{-X}$. 
	
	In particular, if $|m|/N$ is not a square, then the whole boundary integral vanishes.
	
	On the other hand, if $|m|/N$ is a square, we obtain
	\begin{align}
	\label{eq:fourcoeff1}-\int_{\partial M_{T}}F(z)\sum_{\gamma \in \Gamma_{X}\setminus\Gamma}\eta(\gamma X,\tau,z)dz &= \int_{z = iT}^{\alpha_{\ell_{X}}+iT}F_{\ell_{X}}(z)\sum_{\gamma \in \overline{\Gamma}_{\ell_{X}}}\eta(\sigma_{\ell_{X}}^{-1}\gamma X,\tau,z)dz, \\
	& \quad  +\int_{z = iT}^{\alpha_{\ell_{-X}}+iT}F_{\ell_{-X}}(z)\sum_{\gamma \in \overline{\Gamma}_{\ell_{-X}}}\eta(\sigma_{\ell_{-X}}^{-1}\gamma X,\tau,z)dz.
	\end{align}
	We only compute the first integral on the right-hand side since the second one can be computed in the same way if we first write
	\[
	\eta(\sigma_{\ell_{-X}}^{-1}\gamma X,\tau,z) = (-1)^{k+1}\eta(\sigma_{\ell_{-X}}^{-1}\gamma (-X),\tau,z).
	\]
	
	Let $\ell = \ell_{X}$ for brevity. Choosing the orientation of $V$ appropriately, we can assume that
	\[
	X' := \sigma_{\ell}^{-1}.X = \sqrt{\frac{|m|}{N}}\begin{pmatrix}1 & -2r_{\ell} \\ 0 & -1 \end{pmatrix}
	\]
	for some $r_{\ell} \in \Q$. Then the first summand in \eqref{eq:fourcoeff1} equals
	\[
	\int_{z = iT}^{\alpha_{\ell}+iT}F_{\ell}(z)\sum_{\gamma \in \sigma_{\ell}^{-1}\overline{\Gamma}_{\ell}\sigma_{\ell}}\eta(\gamma X',\tau,z)dz.
	\]
	Recall that $\sigma_{\ell}^{-1}\overline{\Gamma}_{\ell}\sigma_{\ell}$ consists of the matrices $\left( \begin{smallmatrix}1 & \alpha_{\ell}n \\ 0 & 1 \end{smallmatrix}\right)$ with $n \in \Z$. Using the definition of $\eta$, we see that the first summand in \eqref{eq:fourcoeff1} equals
	\begin{align*}
	& \int_{z = iT}^{\alpha_{\ell}+iT}F_{\ell}(z)\sum_{n \in \Z}\eta\left(\sqrt{\frac{|m|}{N}}\begin{pmatrix} 1 & 2(\alpha_{\ell}n-r_{\ell}) \\ 0 & -1\end{pmatrix},\tau,z\right)dz \\
	&= \frac{C_{k}v^{k+1}}{(-2\sqrt{|m|N})^{k+1}}\frac{\partial^{k}}{\partial v^{k}}v^{-1}e^{-4\pi|m|v}\int_{z = iT}^{\alpha_{\ell}+iT}F_{\ell}(z)\sum_{n \in \Z} (z+\alpha_{\ell} n-r_{\ell})^{-k-1}e^{-4\pi |m|v\frac{(x+\alpha_{\ell} n - r_{\ell})^{2}}{y^{2}}} dz.
	\end{align*}
	For a function $g(t)$ on $\R$ we let $\hat{g}(w) = \int_{-\infty}^{\infty}g(t)e^{2\pi i tw}dt$ be its Fourier transform. Using Poisson summation we can rewrite the inner sum as
	\begin{align*}
	&\sum_{n \in \Z}(z+\alpha_{\ell} n - r_{\ell})^{-k-1}e^{-4\pi |m|v\frac{(x+\alpha_{\ell} n - r_{\ell})^{2}}{y^{2}}} \\
	&= \frac{1}{\alpha_{\ell}}\sum_{w \in \frac{1}{\alpha_{\ell}}\Z}e^{-2\pi i w(x-r_{\ell})}\int_{-\infty}^{\infty}(t+iy)^{-k-1}e^{-4\pi|m|v \frac{t^{2}}{y^{2}}}e^{2\pi i w t}dt,
	\end{align*}
	where we replaced $t = (x+\alpha_{\ell} n-r_{\ell})$. The required Fourier transform is computed in the next lemma. We let $a = 2\sqrt{\pi|m|v}/y$ and $b = y$ for brevity.
	
	\begin{lemma}
		For $a,b \neq 0$ and $k \in \Z,k\geq 0,$ the Fourier transform of
		\[
		h_{k}(t) = (t+ib)^{-k-1}e^{-a^{2}t^{2}}
		\]
		is given by
		\begin{align*}
		\widehat{h}_{k}(w) &= 
		 -\frac{i^{k+1}}{k!}\pi  e^{a^{2}b^{2}}e^{2\pi bw}\bigg(\erfc(ab+\pi w/a)\sum_{j=0}^{k}\binom{k}{j}(2\pi w)^{k-j}(-ia)^{j}H_{j}(iab) \\
		& \quad + e^{-(ab + \pi w/a)^{2}}\frac{2}{\sqrt{\pi}}\sum_{j=1}^{k}\binom{k}{j}(2\pi w)^{k-j}(-a)^{j}\sum_{\ell=0}^{j-1}\binom{j}{\ell}i^{\ell}H_{\ell}(iab)H_{j-\ell-1}(ab+\pi w/a)\bigg),
		\end{align*}
		where $\erfc(x) = \frac{2}{\sqrt{\pi}}\int_{x}^{\infty}e^{-u^{2}}du$ is the standard complementary error function and $H_{n}(x) = (-1)^{n}e^{x^{2}}\frac{d^{n}}{dx^{n}}e^{-x^{2}}$ is the $n$-th Hermite polynomial.
	\end{lemma}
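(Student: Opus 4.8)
The plan is to establish the formula first for $b>0$ and to treat $b<0$ by the analogous argument at the very end. For $b>0$ I would start from the integral representation
\[
(t+ib)^{-k-1} = \frac{(-i)^{k+1}}{k!}\int_0^\infty s^k e^{is(t+ib)}\,ds,
\]
which converges because $\Ree\bigl(i(t+ib)\bigr)=-b<0$. Substituting this into $\widehat{h}_k(w)=\int_{-\infty}^\infty (t+ib)^{-k-1}e^{-a^2t^2}e^{2\pi itw}\,dt$ and interchanging the order of integration (justified by absolute convergence, since $\int_0^\infty s^k e^{-sb}\,ds<\infty$ and $\int_{-\infty}^\infty e^{-a^2t^2}\,dt<\infty$), the inner Gaussian integral in $t$ evaluates to $\tfrac{\sqrt\pi}{a}e^{-(s+2\pi w)^2/(4a^2)}$. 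This collapses everything to the single real integral
\[
\widehat{h}_k(w) = \frac{(-i)^{k+1}}{k!}\frac{\sqrt\pi}{a}\int_0^\infty s^k e^{-sb-(s+2\pi w)^2/(4a^2)}\,ds.
\]

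Next I would complete the square in the exponent, which produces the overall factor $e^{a^2b^2+2\pi bw}$, and then substitute $r=(s+2\pi w)/(2a)+ab$, so that the lower limit becomes $r_0:=ab+\pi w/a$ and $s=2a\bigl(r-ab\bigr)-2\pi w$. Expanding $s^k$ binomially in order to peel off the powers of $2\pi w$ reduces the problem to the incomplete Gaussian moments
\[
A_j = \int_{r_0}^\infty (r-ab)^j e^{-r^2}\,dr,\qquad 0\le j\le k,
\]
and $\widehat{h}_k(w)$ is then a finite combination $\tfrac{2\sqrt\pi(-i)^{k+1}}{k!}e^{a^2b^2+2\pi bw}\sum_{j=0}^k\binom{k}{j}(-2\pi w)^{k-j}(2a)^j A_j$. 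The heart of the argument is the evaluation of $A_j$, which I would obtain cleanly from its generating function
\[
\sum_{j\ge 0}A_j\frac{x^j}{j!}=\int_{r_0}^\infty e^{x(r-ab)-r^2}\,dr=\frac{\sqrt\pi}{2}\,e^{x^2/4-abx}\,\erfc\!\Bigl(r_0-\tfrac{x}{2}\Bigr).
\]
Here the factor $e^{x^2/4-abx}$ is, by the Hermite generating function $\sum_n H_n(y)\tfrac{\zeta^n}{n!}=e^{2y\zeta-\zeta^2}$ taken at $\zeta=ix/2$, $y=iab$, exactly $\sum_n H_n(iab)(i/2)^n x^n/n!$, while the Taylor coefficients of $\erfc(r_0-x/2)$ produce the values $H_m(r_0)$ because $\partial_x^m e^{-(r_0-x/2)^2}\big|_{x=0}=2^{-m}H_m(r_0)e^{-r_0^2}$. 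Reading off the coefficient of $x^j/j!$ in the Cauchy product gives the closed form
\[
A_j=\frac{\sqrt\pi}{2}\Bigl(\tfrac{i}{2}\Bigr)^{j}H_j(iab)\,\erfc(r_0)+\frac{e^{-r_0^2}}{2^{j}}\sum_{\ell=0}^{j-1}\binom{j}{\ell}i^\ell H_\ell(iab)H_{j-\ell-1}(r_0),
\]
the $\erfc$ term coming from the constant term of $\erfc(r_0-x/2)$ and the double sum from its remaining Taylor series.

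Substituting this expression back and collecting the two types of terms — those carrying $\erfc(ab+\pi w/a)$ and those carrying $e^{-(ab+\pi w/a)^2}$ — then yields the two sums in the statement. The step I expect to be the main obstacle is precisely this final bookkeeping: one must track the accumulated constants $\tfrac{(-i)^{k+1}}{k!}$, $\tfrac{\sqrt\pi}{a}$, the Jacobian $(2a)^{k+1}$, the factors $(i/2)^j$ and $2^{-j}$ from $A_j$, and the binomial weights $\binom{k}{j}(-2\pi w)^{k-j}$, and verify that they combine to exactly the prefactor $-\tfrac{i^{k+1}}{k!}\pi$, the weights $(-ia)^j$, and the factor $\tfrac{2}{\sqrt\pi}$ written in the lemma, with all signs correct (the crucial simplification being $(2a)^j(i/2)^j=(ia)^j$ together with $(-i)^{k+1}(-1)^{k-j}(ia)^j=-(-1)^j i^{k+1+j}a^j=-i^{k+1}(-ia)^j$). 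Finally, for $b<0$ the initial representation must be replaced by $(t+ib)^{-k-1}=\tfrac{i^{k+1}}{k!}\int_0^\infty s^k e^{-is(t+ib)}\,ds$, and the identical computation (equivalently, the change of variables $t\mapsto -t$, $b\mapsto -b$, $w\mapsto -w$) yields the same closed form, completing the proof.
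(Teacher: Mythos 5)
Your argument is correct for the case that actually matters, but it takes a genuinely different route from the paper. The paper writes $(t+ib)^{-k-1}=\frac{i^{k}}{k!}\,\partial_b^{k}(t+ib)^{-1}$, so that $\widehat{h}_k$ is obtained from $\widehat{h}_0$ by $k$-fold $b$-differentiation and Leibniz's rule, and it computes $\widehat{h}_0$ by splitting $h_0(t)=(t-ib)\frac{e^{-a^2t^2}}{t^2+b^2}$ and convolving the known transforms of the Gaussian and of $(t^2+b^2)^{-1}$. You instead use the Laplace-type representation $(t+ib)^{-k-1}=\frac{(-i)^{k+1}}{k!}\int_0^\infty s^k e^{is(t+ib)}\,ds$, perform the Gaussian $t$-integral, complete the square, and reduce everything to the incomplete moments $A_j=\int_{r_0}^\infty(r-ab)^je^{-r^2}\,dr$, which you evaluate in closed form via the generating function $\sum_j A_jx^j/j!=\frac{\sqrt\pi}{2}e^{x^2/4-abx}\erfc(r_0-x/2)$ and the Hermite generating function. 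I checked the pieces you flagged as delicate: the substitution does give the exponent $a^2b^2+2\pi bw-r^2$ with lower limit $r_0=ab+\pi w/a$ and Jacobian $2a$; your closed form for $A_j$ is right (the powers of $2$ collapse to $2^{-j}$ in the second term); and the constants recombine exactly as you say, via $(2a)^j(i/2)^j=(ia)^j$ and $(-i)^{k+1}(-1)^{k-j}(ia)^j=-i^{k+1}(-ia)^j$, to reproduce the stated prefactors. Your approach has the advantage of producing $\widehat h_k$ directly for all $k$ in one computation, where the paper's Leibniz differentiation of the $\erfc$ term is the step that generates the Hermite polynomials; the two are comparable in length.

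One genuine error: your closing sentence about $b<0$. Running your own computation with the representation $(t+ib)^{-1}=i\int_0^\infty e^{-is(t+ib)}\,ds$ for $b<0$ and $k=0$ gives $\widehat h_0(w)=+i\pi e^{a^2b^2}e^{2\pi bw}\erfc(-r_0)$, which is \emph{not} the stated $-i\pi e^{a^2b^2}e^{2\pi bw}\erfc(r_0)$ (they differ by $2i\pi e^{a^2b^2+2\pi bw}$, and the stated expression is even unbounded as $w\to-\infty$ when $b<0$, so it cannot be the transform of an $L^1$ function). So the "identical computation" does not yield the same closed form. This is really a defect of the lemma's hypothesis "$a,b\neq 0$": the formula as printed only holds for $a,b>0$, the paper's own proof likewise uses $\widehat{e^{-a^2t^2}}=\frac{\sqrt\pi}{a}e^{-\pi^2w^2/a^2}$ and $\widehat{(t^2+b^2)^{-1}}=\frac{\pi}{b}e^{-2\pi b|w|}$, which require $a,b>0$, and in the application one has $a=2\sqrt{\pi|m|v}/y>0$ and $b=y>0$. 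You should simply restrict to $a,b>0$ rather than claim the negative case.
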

	
	\begin{proof}
		Since
		\[
		(t+ib)^{-k-1}e^{-a^{2}t^{2}} = \frac{i^{k}}{k!} \left( \frac{\partial^{k}}{\partial b^{k}}(t+ib)^{-1}\right)e^{-a^{2}t^{2}},
		\]
		the formula for $\widehat{h}_{k}$ follows from the one for $\widehat{h}_{0}$ and Leibniz's rule. Thus it suffices to prove that the Fourier transform of
		\[
		h_{0}(t) = (t+ib)^{-1}e^{-a^{2}t^{2}} = (t-ib)\frac{e^{-a^{2}t^{2}}}{t^{2}+b^{2}}
		\]
		is given by
		\[
		\widehat{h}_{0}(w) =  -i\pi  e^{a^{2}b^{2}}e^{2\pi bw}\erfc(ab+\pi w/a).
		\]
		
		Using the well known facts that the Fourier transforms of $e^{-a^{2}t^{2}}$ and $\frac{1}{t^{2}+b^{2}}$ are given by $\frac{\sqrt{\pi}}{a}e^{-\pi^{2}w^{2}/a^{2}}$ and $\frac{\pi}{b}e^{-2\pi b|w|}$, respectively, and that the Fourier transform of a product of two functions is the convolution of the individual transforms, we see that the Fourier transform of $f(t) = \frac{e^{-a^{2}t^{2}}}{t^{2}+b^{2}}$ is given by
		\begin{align*}
		\widehat{f}(w) &= \frac{\pi^{3/2}}{ab}\int_{-\infty}^{\infty}e^{-\pi^{2}x^{2}/a^{2}}e^{-2\pi b|w-x|} dx \\
		&= 	\frac{\pi^{3/2}}{ab}e^{2\pi bw}\int_{w}^{\infty}e^{-\pi^{2}x^{2}/a^{2}}e^{-2\pi b x} dx  + \frac{\pi^{3/2}}{ab}e^{-2\pi bw}\int_{-w}^{\infty}e^{-\pi^{2}x^{2}/a^{2}}e^{-2\pi bx} dx  \\
		&= \frac{\pi}{2b}e^{a^{2}b^{2}}\left(e^{2\pi bw}\erfc(ab+\pi w/a) + e^{-2\pi bw}\erfc(ab-\pi w/a) \right).
		\end{align*}
		Since the Fourier transform of $tf(t)$ is given by $-\frac{i}{2\pi}\frac{d}{dw}\widehat{f}(w)$, we obtain
		\[
		\widehat{(tf)}(w) = -\frac{i\pi}{2}e^{a^{2}b^{2}}\left(e^{2\pi bw}\erfc(ab+\pi w/a) - e^{-2\pi bw}\erfc(ab-\pi w/a) \right).
		\]
		Using $\widehat{h}_{0} =\widehat{(tf)} - ib\widehat{f}$ we get the stated formula.
	\end{proof}
	
%	Let 
%	\[
%	F_{\ell}(z) = \sum_{w \gg -\infty}a_{\ell}^{+}(w)e^{2\pi i wz}+\sum_{w  < 0}a_{\ell}^{-}(w)\Gamma(1+2k,4\pi|w|y)e^{2\pi i w z}
%	\]
	Let $a_{\ell}^{+}(w)$ and $a_{\ell}^{-}(w)$ denote the Fourier coefficients of $F_{\ell}$. Using the above lemma with $a=2\sqrt{\pi|m|v}/y$ and $b= y$, we find that the right-hand side of \eqref{eq:fourcoeff1} is equal to
	\begin{align*}
	&\frac{-C_{k}v^{k+1}i^{k+1}\pi}{(-2\sqrt{|m|N})^{k+1}k!}\frac{\partial^{k}}{\partial v^{k}}v^{-1} \lim_{T \to \infty}\sum_{w \in \frac{1}{\alpha_{\ell}}\Z}(a_{\ell}^{+}(w)+a^{-}_{\ell}(w)\Gamma(1+2k,4\pi |w| T))e^{2\pi i r_{\ell}w} \\
	&\times \bigg(\erfc\left(2\sqrt{\pi|m|v}+T\sqrt{\pi} w/(2\sqrt{|m|v})\right) \\
	& \quad  \qquad \sum_{j=0}^{k}\binom{k}{j}(2\pi w)^{k-j}\left(-2i\sqrt{\pi|m|v}/T\right)^{j}H_{j}\left(2i\sqrt{\pi|m|v}\right) \\
		& \qquad + e^{-\left(2\sqrt{\pi|m|v}+T\sqrt{\pi} w/(2\sqrt{|m|v})\right)^{2}}\frac{2}{\sqrt{\pi}}\sum_{j=1}^{k}\binom{k}{j}(2\pi w)^{k-j}\left(-2\sqrt{\pi|m|v}\right)^{j} \\
		&\qquad \quad \sum_{\ell=0}^{j-1}\binom{j}{\ell}i^{\ell}H_{\ell}\left(2i\sqrt{\pi|m|v}\right)H_{j-\ell-1}\left(2\sqrt{\pi|m|v}+T\sqrt{\pi} w/(2\sqrt{|m|v})\right)\bigg).
	\end{align*}
	Note that $\erfc(x) = O(e^{-x^{2}})$ as $x \to +\infty$ and $\lim_{x \to -\infty}\erfc(x) = 2$. Further, the incomplete Gamma function is of linear exponential growth. 
	
	For $k = 0$ the last three lines disappear, and the summands for $w > 0$ vanish as $T \to \infty$. Thus all that remains in the limit is
	\begin{align*}
	-\frac{i}{2\sqrt{|m|}} \sum_{w < 0}a_{\ell}^{+}(w)e^{2\pi ir_{\ell}w} - \frac{i}{4\sqrt{|m|}} a_{\ell}^{+}(0)\erfc(2\sqrt{\pi|m|v}),
	\end{align*}
	in this case. Note that $\sqrt{\pi}\erfc(2\sqrt{\pi|m|v}) = \Gamma(\tfrac{1}{2},4\pi|m|v)$.
	
	For $k > 0$ all summands for $w \geq 0$ vanish in the limit. Further, the summands for $1 \leq j \leq k$ in the third row, and the two last rows vanish as $T \to \infty$. Thus we are left with
	\[
	\frac{(-i)^{k+1}}{2\sqrt{|m|}}\left(\frac{\sqrt{N}}{4\pi\sqrt{|m|}}\right)^{k}\sum_{w < 0}a_{\ell}^{+}(w)(4\pi w)^{k}e^{2\pi i r_{\ell}w},
	\]
	if $k > 0$. Here we used $v^{k+1}\frac{\partial^{k}}{\partial v^{k}}v^{-1} = (-1)^{k}k!$.

	\subsection{Fourier coefficients of index $0$}

	We now want to compute 
	\[
	C(0,h) = \lim_{T \to \infty}\int_{M_{T}}F(z)\sum_{X \in L_{0,h}}\psi_{M,k}^{0}(X,\tau,z)y^{-2k}d\mu(z),
	\]
	where $L_{0,h} = \{X \in L + h:Q(X) = 0\}$. Note that the sum over $X$ is now infinite. Further, we have $\psi_{M,k}^{0}(0,\tau,z) = 0$ so we can leave out the summand for $X = 0$. The computation for $Q(X) = 0$ is quite similar to the one for $Q(X) < 0$ above, so we skip some arguments. Using the function $\eta(X,\tau,z)$ defined in~\eqref{eq:eta} and Stokes' Theorem we get
	\begin{align*}
	C(0,h) &= -\lim_{T\to \infty}\int_{M_{T}}\overline{\xi_{-2k,z}F(z)}\sum_{\substack{X \in L_{0,h} \\ X \neq 0}}\eta(X,\tau,z)y^{2k+2}d\mu(z) \\
	&\quad -\lim_{T\to \infty}\int_{\partial M_{T}}F(z)\sum_{\substack{X \in  L_{0,h} \\ X \neq 0}}\eta(X,\tau,z)dz.
	\end{align*}
	Since $\xi_{-2k,z}F$ is a cusp form, we can write the first integral on the right-hand side as an integral over $M$.
	
	For each isotropic line $\ell \in \Iso(V)$ we choose a positively oriented primitive vector $X_{\ell} \in \ell \cap L$.
%	 and a matrix $\sigma_{\ell} \in \SL_{2}(\Z)$ such that $\sigma_{\ell}\ell_{\infty} = \ell$, where $\ell_{\infty} = \R \left(\begin{smallmatrix}0 & 1 \\ 0 & 0 \end{smallmatrix}\right)$ is the line corresponding to the cusp $\infty$.
%	 We then have
%	\begin{align}\label{CuspStabiliser}
%	\sigma_{\ell}^{-1}\Gamma_{\ell}\sigma_{\ell} = \left\{\pm \begin{pmatrix}1 & n\alpha_{\ell} \\ 0 & 1 \end{pmatrix}: n \in \Z\right\}
%	\end{align}
%	for some $\alpha_{\ell} \in \Q_{>0}$.	
	If $\ell \cap (L + h) \neq \emptyset$ we can fix some vector $h_{\ell} \in \ell \cap (L+h)$ and write $\ell \cap (L+h) = \Z X_{\ell} + h_{\ell}$. Note that $\sigma_{\ell}^{-1}(nX_{\ell} + h_{\ell}) = \left(\begin{smallmatrix}0 & n\beta_{\ell} + k_{\ell} \\ 0 & 0 \end{smallmatrix}\right)$ for some $k_{\ell} \in \Q$.
	
	We now parametrize the set $L_{0,h}\setminus \{0\}$ by the points $n X_{\ell} + h_{\ell}$, where $\ell$ runs through all isotropic lines with $\ell \cap (L+h) \neq \emptyset$ and $n$ runs through $\Z$ such that $n\beta_{\ell}+k_{\ell} \neq 0$.

	\subsubsection{The integral over $M$}

	Using the above parametrization for $L_{0,h}\setminus\{0\}$ the integral over $M$ in $C(0,h)$ becomes
	\begin{align*}
	-\sum_{\substack{\ell \in \Gamma \setminus \Iso(V) \\ \ell \cap (L + h) \neq \emptyset}}\int_{M}\overline{\xi_{-2k,z}F(z)}\sum_{\substack{n \in \Z \\ n\beta_{\ell}+k_{\ell} \neq 0}}\sum_{\gamma \in \Gamma_{\ell}\setminus \Gamma}\eta(\gamma(nX_{\ell} + h_{\ell}),\tau,z)y^{2k+2}d\mu(z).
	\end{align*}
	Replacing $z$ by $\sigma_{\ell}z$ and using the unfolding argument, we get
	\begin{align*}
	-\sum_{\substack{\ell \in \Gamma \setminus \Iso(V) \\ \ell \cap (L + h) \neq \emptyset}}\int_{0}^{\infty}\int_{0}^{\alpha_{\ell}}\overline{\xi_{-2k,z}F_{\ell}(z)}\sum_{\substack{n \in \Z \\ n\beta_{\ell}+k_{\ell} \neq 0}}\eta\left(\begin{pmatrix}0 & n\beta_{\ell} + k_{\ell}  \\ 0 & 0\end{pmatrix},\tau,z\right)y^{2k+2} \frac{dx \ dy}{y^{2}}
	\end{align*}
	where $F_{\ell} = F|_{-2k}\sigma_{\ell}$. Explicitly, we have
	\begin{align}\label{eq:etaexplicit}
	\eta\left(\begin{pmatrix}0 & n\beta_{\ell} + k_{\ell}  \\ 0 & 0\end{pmatrix},\tau,z\right) = C_{k}v^{k+1}(-N(n\beta_{\ell}+k_{\ell}))^{-k-1}\frac{\partial^{k}}{\partial v^{k}}\left(v^{-1}e^{-\pi v N\frac{(n\beta_{\ell}+k_{\ell})^{2}}{y^{2}}} \right),
	\end{align}
	which is independent of $x$. Therefore the integral over $x$ picks out the constant coefficient of $\overline{\xi_{-2k,z}F_{\ell}}$, which is $0$ since $\xi_{-2k,z}F_{\ell}$ is a cusp form. Thus the integral over $M$ vanishes.
	
	\subsubsection{The boundary integral}
	
	Plugging in the definition of the truncated curve \eqref{eq:TruncFun}, the boundary integral is given by
	\begin{align*}
	\lim_{T\to \infty}\sum_{\substack{\ell \in \Gamma \setminus \Iso(V) \\ \ell \cap (L + h) \neq \emptyset}}\sum_{\ell' \in \Gamma \setminus \Iso(V)}\int_{z = iT}^{\alpha_{\ell'} + iT}F_{\ell'}(z)\sum_{\substack{n \in \Z \\ n\beta_{\ell} + k_{\ell} \neq 0}}\sum_{\gamma \in \Gamma_{\ell}\setminus\Gamma}\eta\left(\sigma_{\ell'}^{-1}\gamma \sigma_{\ell}\begin{pmatrix}0 & n\beta_{\ell} + k_{\ell}  \\ 0 & 0\end{pmatrix},\tau,z\right)dz.
	\end{align*}
	It can be seen as in the proof of Lemma 5.2 in \cite{brfu06} that in the limit only the contributions for $\ell' = \ell$ and $\gamma \in \Gamma_{\ell}$ remain, so we get
	\begin{align*}
	\lim_{T\to \infty}\sum_{\substack{\ell \in \Gamma \setminus \Iso(V) \\ \ell \cap (L + h) \neq \emptyset}}\int_{z= iT}^{\alpha_{\ell}+iT}F_{\ell}(z)\sum_{\substack{n \in \Z \\ n\beta_{\ell} + k_{\ell} \neq 0}}\eta\left(\begin{pmatrix}0 & n\beta_{\ell} + k_{\ell}  \\ 0 & 0\end{pmatrix},\tau,z\right)dz.
	\end{align*}
	Using the explicit form (\ref{eq:etaexplicit}) of $\eta$ and carrying out the integral this becomes
	\begin{align*}
	\frac{C_{k}v^{k+1}}{(-N)^{k+1}}\frac{\partial^{k}}{\partial v^{k}}v^{-1}\sum_{\substack{\ell \in \Gamma \setminus \Iso(V) \\ \ell \cap (L + h) \neq \emptyset}}\alpha_{\ell}a^{+}_{\ell}(0)\lim_{T\to \infty}\sum_{\substack{n \in \Z \\ n\beta_{\ell} + k_{\ell} \neq 0}}(n\beta_{\ell}+k_{\ell})^{-k-1}e^{-\pi vN\frac{(n\beta_{\ell}+k_{\ell})^{2}}{T^{2}}}.
	\end{align*}
	If $k_{\ell}/\beta_{\ell} \in \Z$, we can shift the summation index by $k_{\ell}/\beta_{\ell}$ and see that the terms with $n$ and $-n$ cancel if $k$ is even and add up if $k$ is odd, so in this case the limit of the sum over $n$ is $0$ if $k$ is even or $2\beta_{\ell}^{-k-1}\zeta(k+1)$ if $k$ is odd.
%	\[
%	\lim_{T\to \infty}\sum_{\substack{n \in \Z \\ n\beta_{\ell} + k_{\ell} \neq 0}}(n\beta_{\ell}+k_{\ell})^{-k-1}e^{-\pi N v\frac{(n\beta_{\ell}+k_{\ell})^{2}}{T^{2}}} = 
%	\]
	On the other hand, $k_{\ell}/\beta_{\ell} \in \Z$ is only possible if $h_{\ell}$ is an integral multiple of $X_{\ell}$ and hence in $L$, i.e.~this only happens for $h = 0 \mod L$. 
	
	Now let $h \neq 0 \mod L$ and thus $k_{\ell}/\beta_{\ell} \notin\Z$. For $k > 0$ we can interchange the sum and the limit by the dominated convergence theorem. Splitting the sum into $n \geq 0$ and $n < 0$ and replacing $n$ by $1-n$ in the second part, we obtain
	\begin{align*}
	&\lim_{T\to \infty}\sum_{n \in \Z}(n\beta_{\ell}+k_{\ell})^{-k-1}e^{-\pi vN\frac{(n\beta_{\ell}+k_{\ell})^{2}}{T^{2}}} \\
	 &\quad\quad\quad\quad=\beta_{\ell}^{-k-1}\left(\zeta(k+1,k_{\ell}/\beta_{\ell}) +(-1)^{k+1}\zeta(k+1,1-k_{\ell}/\beta_{\ell})\right),
	\end{align*}
	where $\zeta(s,\rho) = \sum_{n \geq 0}(n+\rho)^{-s}$ denotes the Hurwitz zeta function. For $k = 0$ we first reorder the sum as
	\begin{align*}
	&\sum_{n \in \Z}(n\beta_{\ell}+k_{\ell})^{-k-1}e^{-\pi N v\frac{(n\beta_{\ell}+k_{\ell})^{2}}{T^{2}}}= k_{\ell}^{-1}e^{-\pi vN\frac{k_{\ell}^{2}}{T^{2}}} \\
	&\qquad \qquad +
	\beta_{\ell}^{-1}\sum_{n > 0}\left((n+k_{\ell}/\beta_{\ell})^{-1}e^{-\pi  vN\frac{(n\beta_{\ell}+k_{\ell})^{2}}{T^{2}}} + (-n+k_{\ell}/\beta_{\ell})^{-1}e^{-\pi  vN\frac{(-n\beta_{\ell}+k_{\ell})^{2}}{T^{2}}}\right).
	\end{align*}
	Now using dominated convergence again, this goes to
	\[
	\beta_{\ell}^{-1}\left(\sum_{n > 0}\left((n+k_{\ell}/\beta_{\ell})^{-1} + (-n+k_{\ell}/\beta_{\ell})^{-1}\right) +(k_{\ell}/\beta_{\ell})^{-1}\right) = \beta_{\ell}^{-1}\pi\cot(\pi k_{\ell}/\beta_{\ell})
	\]
	as $T \to \infty$. Note that $\frac{C_{k}v^{k+1}}{(-N)^{k+1}} \frac{\partial^{k}}{\partial v^{k}}v^{-1} = \frac{(-1)^{k}k!}{2\sqrt{N}\pi^{k+1}}$. This completes the calculation of $C(0,h)$.

\section{The twisted Millson theta lift}
We now explain how to obtain twisted versions of the two theta lifts and how this leads to a generalization of results by Zagier in \cite{zatr}. Throughout this section we let
\[
L= \left\{\begin{pmatrix}b&-a/N\\c&-b\end{pmatrix}\,:\,a,b,c\in\Z\right\}
\]
be the lattice given in Example~\ref{ex:lattice}, with dual lattice
\[
L'= \left\{\begin{pmatrix}b/2N&-a/N\\c&-b/2N\end{pmatrix}\,:\,a,b,c\in\Z\right\},
\]
and we let $\Gamma = \Gamma_{0}(N)$. Note that $\Gamma$ takes $L$ to itself and acts trivially on $L'/L$.

From now on we let $\Delta\in\Z$ be a fundamental discriminant and $r\in \Z$ such that $\Delta \equiv r^2  (4N)$. We consider the rescaled lattice $\Delta L$ together with the quadratic form $Q_{\Delta}(X) := \frac{1}{|\Delta|}Q(X)$. The corresponding bilinear form is given by $(\cdot,\cdot)_{\Delta} = \frac{1}{|\Delta|}(\cdot,\cdot)$, and the dual lattice of $\Delta L$ with respect to $(\cdot,\cdot)_{\Delta}$ is equal to $L'$ as above. We denote the discriminant group $L'/\Delta L$ by $\dgdelta$. Note that $\dg(1)=\dg$ and $\abs{\dgdelta}=\abs{\Delta}^3 \abs{\dg} = 2N\, \abs{\Delta}^3$.

Following \cite{gkz} we define a generalized genus character for
$\delta = \left(\begin{smallmatrix} b/2N& -a/N \\ c&-b/2N \end{smallmatrix}\right) \in L'$ by
\begin{equation*}
\chi_{\Delta}(\delta)=\chi_{\Delta}(\left[a,b,Nc\right]):=
\begin{cases}
\Deltaover{n}, & \text{if } \Delta | b^2-4Nac,\,(b^2-4Nac)/\Delta \text{ is a}
\\
& \text{square mod } 4N \text{ and } \gcd(a,b,c,\Delta)=1,
\\
0, &\text{otherwise}.
\end{cases}
\end{equation*}
Here, $\left[a,b,Nc\right]$ is the integral binary quadratic form
corresponding to $\delta$, and $n$ is any integer prime to $\Delta$ represented by one of the quadratic forms $[N_1a,b,N_2c]$ with $N_1N_2 = N$ and $N_1, N_2 > 0$. Note that the function $\chi_{\Delta}$ is invariant under the action of $\G_0(N)$.
%It can be computed via the following formula \cite[Section I.2, Proposition 1]{GKZ}: If $\Delta=\Delta_1\Delta_2$ is a factorization of $\Delta$ into discriminants and $N=N_1N_2$ is a factorization of $N$ into positive factors such that $(\Delta_1,N_1a)=(\Delta_2,N_2c)=1$, then
%\begin{equation*}
% \chi_{\Delta}(\left[a,b,Nc\right])=\left(\frac{\Delta_1}{N_1a}\right)\left(\frac{
%\Delta_2}{N_2c}\right).
%\end{equation*}
%If no such factorizations of $\Delta$ and $N$ exist, we have $\chi_{\Delta}(\left[a,b,Nc\right])=0$.

Since $\chi_{\Delta}(\delta)$ depends only on $\delta \in L'$ modulo $\Delta L$, we can view it as a function on the discriminant group $\dgdelta$. Let $\rho_\Delta$ be the representation corresponding to $\dgdelta$. In \cite{ae} it was shown that we obtain an intertwiner of the Weil representations
corresponding to $\dg=L'/L$ and $\dgdelta$ via $\chi_\Delta$.

\begin{proposition}[{\cite[Proposition 3.2.]{ae}}] \label{prop:intertwiner}
Let $\pi: \dgdelta \rightarrow \dg$ be the natural projection.
 For $h \in \dg$, we define
 \begin{equation}
  \psi_{\Delta,r}(\e_h) := \sum_{\substack{\delta \in \dgdelta \\ \pi(\delta)=rh \\ Q_{\Delta}(\delta) \equiv \sgn(\Delta) Q(h) \, (\Z)}} \chi_\Delta(\delta) \e_\delta.
 \end{equation}
Then $\psi_{\Delta,r}: \dg \rightarrow \dgdelta$ defines an intertwining linear map between the representations $\widetilde{\rho}_{L}$ and $\rho_\Delta$, where
\[
  \widetilde{\rho}_{L} =
 \begin{cases}
 \rho_{L} & \text{if } \Delta>0, \\
		      \overline\rho_{L} & \text{if } \Delta<0.
 \end{cases}
\]
\end{proposition}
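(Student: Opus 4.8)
The map $\psi_{\Delta,r}$ is linear by construction, so since $\Mp_2(\Z)$ is generated by $S = (\smallSmatrix,\sqrt{\tau})$ and $T = (\smallTmatrix,1)$ it suffices to verify the intertwining relation $\psi_{\Delta,r}\bigl(\widetilde{\rho}_L(\gamma,\phi)\e_h\bigr) = \rho_\Delta(\gamma,\phi)\,\psi_{\Delta,r}(\e_h)$ for $(\gamma,\phi)\in\{S,T\}$ and all $h\in\dg$. Throughout I would package the two cases $\Delta\gtrless 0$ into the single formulas
\[
\widetilde{\rho}_L(T)\e_h = e\bigl(\sgn(\Delta)Q(h)\bigr)\e_h, \qquad
\widetilde{\rho}_L(S)\e_h = \frac{(\sqrt{i})^{\sgn(\Delta)}}{\sqrt{\abs{\dg}}}\sum_{h'\in\dg} e\bigl(-\sgn(\Delta)(h',h)\bigr)\e_{h'},
\]
which for $\Delta>0$ reproduce $\rho_L$ and for $\Delta<0$ reproduce $\overline{\rho}_L$, using $\overline{\sqrt{i}}=(\sqrt{i})^{-1}$.

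The generator $T$ is immediate. Applying $\psi_{\Delta,r}$ to $\widetilde{\rho}_L(T)\e_h$ scales each $\e_\delta$ occurring in $\psi_{\Delta,r}(\e_h)$ by $e(\sgn(\Delta)Q(h))$, whereas $\rho_\Delta(T)$ scales $\e_\delta$ by $e(Q_\Delta(\delta))$. These agree because every $\delta$ contributing to $\psi_{\Delta,r}(\e_h)$ satisfies the defining congruence $Q_\Delta(\delta)\equiv\sgn(\Delta)Q(h)\pmod{\Z}$, so that $e(Q_\Delta(\delta))=e(\sgn(\Delta)Q(h))$. This is precisely the reason the quadratic congruence was built into the definition of $\psi_{\Delta,r}$.

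The generator $S$ is the substantive case. Expanding both sides and comparing the coefficient of a fixed basis vector $\e_{\delta'}$, and using that the $\e_{\delta'}$-coefficient of $\psi_{\Delta,r}(\e_{h'})$ equals $\chi_\Delta(\delta')$ exactly when $\pi(\delta')=rh'$ and the quadratic congruence holds, one is reduced to the scalar identity
\[
\frac{\sqrt{i}}{\sqrt{\abs{\dgdelta}}}\sum_{\delta}\chi_\Delta(\delta)\,e\bigl(-(\delta',\delta)_\Delta\bigr)
= \frac{(\sqrt{i})^{\sgn(\Delta)}}{\sqrt{\abs{\dg}}}\,\chi_\Delta(\delta')\sum_{h'} e\bigl(-\sgn(\Delta)(h',h)\bigr),
\]
valid for all $h\in\dg$ and $\delta'\in\dgdelta$, where $\delta$ ranges over $\dgdelta$ with $\pi(\delta)=rh$ and $Q_\Delta(\delta)\equiv\sgn(\Delta)Q(h)\pmod{\Z}$, and $h'$ ranges over the classes in $\dg$ with $rh'=\pi(\delta')$ and $Q_\Delta(\delta')\equiv\sgn(\Delta)Q(h')\pmod{\Z}$. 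To attack this I would parametrize the left-hand sum by writing $\delta$ as a fixed lift of $rh$ shifted by the kernel of $\pi$, which is $L/\Delta L$, and exploit the bilinearity of $(\cdot,\cdot)_\Delta=\tfrac{1}{\abs{\Delta}}(\cdot,\cdot)$ to separate the $h$-dependence from the $\delta'$-dependence; the sum over $L/\Delta L$ then pairs the Kronecker symbol $\chi_\Delta$ against the additive character into a twisted quadratic Gauss sum.

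The main obstacle is the evaluation of this twisted Gauss sum. It is the higher-rank analogue of the classical formula $\sum_{n\bmod\abs{\Delta}}\Deltaover{n}e(n/\abs{\Delta})=\varepsilon\sqrt{\abs{\Delta}}$ for the primitive real character $\Deltaover{\cdot}$, with $\varepsilon=1$ for $\Delta>0$ and $\varepsilon=i$ for $\Delta<0$; this is exactly where the fundamentality of $\Delta$ enters decisively, since it guarantees primitivity and hence a clean evaluation, and it is simultaneously the source of the sign discrepancy $(\sqrt{i})^{\sgn(\Delta)}$ versus $\sqrt{i}$ and of the ratio of normalizing factors $\sqrt{\abs{\dgdelta}}=\abs{\Delta}^{3/2}\sqrt{\abs{\dg}}$. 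I would carry out this computation following Gross--Kohnen--Zagier \cite{gkz}, factoring into local contributions at the primes dividing $\Delta$ and at $2N$ and matching the resulting product of local Gauss sums against the constants on the two sides. Once the scalar identity is verified for all $\delta'$, the $S$-relation follows, and together with the $T$-relation this completes the proof.
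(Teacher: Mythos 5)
The paper does not prove this proposition at all: it is imported verbatim from \cite{ae} (Proposition 3.2 there), and your outline follows exactly the route of that cited proof --- checking the intertwining relation on the generators $T$ and $S$, with $T$ immediate from the congruence $Q_{\Delta}(\delta)\equiv\sgn(\Delta)Q(h)\,(\Z)$ built into $\psi_{\Delta,r}$, and $S$ reduced to a genus-character Gauss-sum identity that is ultimately evaluated via \cite{gkz}. Your reduction to the scalar identity is correct, and deferring the decisive Gauss-sum evaluation to \cite{gkz} is consistent with how the source itself handles that step.
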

We obtain twisted versions of the Millson, Kudla-Millson and Shintani theta function introduced in Section~\ref{sec:thetafunctions} by setting
\[
\Theta_{\Delta,r}(\tau,z,\varphi) =  \sum_{h \in L'/L}\left\langle \psi_{\Delta,r}(\e_{h}),\overline{\Theta_{\dgdelta}(\tau,z,\varphi)} \right\rangle \e_{h},
\] 
where 
\[
\Theta_{\dgdelta}(\tau,z,\varphi) = \sum_{\delta \in \dgdelta}\sum_{X \in \delta + \Delta L}\varphi(X,\tau,z)\e_{\delta}
\]
is the usual theta function associated to a Schwartz function $\varphi$ and the discriminant group $\dgdelta$. It is easy to check that these twisted theta functions have the same transformation behaviour as their untwisted counterparts (see Proposition \ref{prop:propertiestheta}) and satisfy the same growth estimates (see Proposition \ref{prop:growththeta}) and differential equations if we replace $\rho_{L}$ by $\tilde{\rho}_{L}$, $N$ by $N/|\Delta|$ and $\Theta_{\ell,k}$ by $\Theta_{\Delta,r,\ell,k} = \sum_{h \in L'/L}\langle \psi_{\Delta,r}(\e_{h}),\overline{\Theta_{\dgdelta,\ell,k}}\rangle \e_{h}$.
%
% We denote these functions by $\Theta_{\Delta,r}(\tau,z,\psi_{M,k}), \Theta_{\Delta,r}(\tau,z,\varphi_{KM})$ and $\Theta_{\Delta,r}(\tau,z,\varphi_{Sh,k})$, respectively.
%Namely, we have 
%\begin{equation}
% \thetaL{\phim} := v^{1/2} \sum_{h \in \dg}   \sum\limits_{\substack{\delta\in \dgdelta\\ \pi(\delta) = rh \\Q_\Delta(\delta)\equiv\sgn(\Delta)Q(h)\, (\Z)}} \chi_{\Delta}(\delta) \sum\limits_{\lambda\in \Delta L+\delta} \psi_{\mathrm{M},\Delta} (\lambda,\tau, z)\mathfrak{e}_h
%\end{equation}
%where $\psi_{\mathrm{M},\Delta} =v^{1/2}p_z(\lambda)e^{-2\pi v R(\lambda,z)/|\Delta|} e^{2\pi i Q_\Delta(\lambda)\tau}$. Similarly, we obtain a twisted version $\thetaL{\phikm}$ of the Kudla-Millson function.

Using the twisted theta functions we construct twisted analogs of the lifts considered in Section \ref{sec:thetalifts}. For example, for a harmonic weak Maass form $F \in H_{-2k}^{+}(\Gamma_{0}(N))$ we define the twisted Millson theta lift by
\[
\IM_{\Delta,r}(F,\tau) = \lim_{T \to \infty}\int_{M_{T}}F(z)\Theta_{\Delta,r}(\tau,z,\psi_{M,k})y^{-2k}d\mu(z).
\]
Analogously we obtain the twisted Shintani lift $\ISh_{\Delta,r}$. The twisted theta lifts have the same mapping properties as their untwisted versions (see Theorem \ref{thm:PropertiesLifts}), again with $\rho_{L}$ replaced by $\tilde{\rho}_{L}$. Further, we obtain the following generalization of Proposition \ref{prop:xidiagram}.

\begin{proposition}\label{prop:xidiagramtwisted}
		For $F \in H_{0}^{+}(\Gamma)$ we have
		\[
		\xi_{1/2,\tau}(\IM_{\Delta,r}(F,\tau)) = -\frac{\sqrt{|\Delta|}}{2\sqrt{N}}\ISh_{\Delta,r}(\xi_{0,z}F,\tau)+\frac{1}{2N}\sum_{\ell \in \Gamma_{0}(N) \setminus \Iso(V)}\varepsilon_{\ell}\overline{a_{\ell}^{+}(0)\Theta_{\Delta,r,\ell,1}(\tau)},
		\]
		and for $k \in \Z_{> 0}$ and $F \in H_{-2k}^{+}(\Gamma)$ we have
		\[
		\xi_{1/2-k,\tau}(\IM_{\Delta,r}(F,\tau)) = -\frac{\sqrt{|\Delta|}}{2\sqrt{N}}\ISh_{\Delta,r}(\xi_{-2k,z}F,\tau).
		\]
	\end{proposition}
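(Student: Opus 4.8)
The plan is to run the proof of Proposition~\ref{prop:xidiagram} essentially verbatim, replacing every untwisted object by its twisted counterpart. As recorded just before the statement, the twisted theta functions obey the transformation laws of Proposition~\ref{prop:propertiestheta}, the growth estimates of Proposition~\ref{prop:growththeta}, and the differential identities of Section~\ref{sec:thetafunctions}, once one replaces $\rho_{L}$ by $\widetilde{\rho}_{L}$, $N$ by $N/|\Delta|$, and $\Theta_{\ell,k}$ by $\Theta_{\Delta,r,\ell,k}$. In particular, the twisted form of Lemma~\ref{lm:relshinmillson} reads
\[
\xi_{1/2-k,\tau}\Theta_{\Delta,r}(\tau,z,\psi_{M,k}) = \frac{1}{2\sqrt{N/|\Delta|}}\,\xi_{2k+2,z}\Theta_{\Delta,r}(\tau,z,\varphi_{Sh,k}) = \frac{\sqrt{|\Delta|}}{2\sqrt{N}}\,\xi_{2k+2,z}\Theta_{\Delta,r}(\tau,z,\varphi_{Sh,k}),
\]
and this single substitution $N \to N/|\Delta|$ is precisely the source of the factor $\sqrt{|\Delta|}$ appearing in the assertion.

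First I would pull $\xi_{1/2-k,\tau}$ inside the regularized integral defining $\IM_{\Delta,r}(F,\tau)$; since $F$ is independent of $\tau$ and $\xi$ is antilinear, this merely conjugates $F$, and the displayed identity turns the integrand into $\tfrac{\sqrt{|\Delta|}}{2\sqrt{N}}\overline{F(z)}\,\xi_{2k+2,z}\Theta_{\Delta,r}(\tau,z,\varphi_{Sh,k})$. Exactly as in Proposition~\ref{prop:xidiagram}, Stokes' Theorem in the form of \cite[Lemma 4.2]{brhabil} moves $\xi_{2k+2,z}$ off the theta function and onto $F$, producing
\[
-\frac{\sqrt{|\Delta|}}{2\sqrt{N}}\int_{M_{T}}\xi_{-2k,z}F(z)\,\overline{\Theta_{\Delta,r}(\tau,z,\varphi_{Sh,k})}\,y^{2k+2}d\mu(z) - \frac{\sqrt{|\Delta|}}{2\sqrt{N}}\int_{\partial M_{T}}\overline{F(z)\Theta_{\Delta,r}(\tau,z,\varphi_{Sh,k})}\,d\bar{z}.
\]
In the limit $T \to \infty$ the first term is exactly $-\tfrac{\sqrt{|\Delta|}}{2\sqrt{N}}\ISh_{\Delta,r}(\xi_{-2k,z}F,\tau)$ by definition of the twisted Shintani lift, which already yields the $k>0$ statement once the boundary term is shown to vanish.

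For the boundary integral I would argue as in the untwisted proof, writing it as a sum over cusps $\ell \in \Gamma_{0}(N)\setminus\Iso(V)$ of the integrals $\int_{iT}^{\alpha_{\ell}+iT}\overline{F_{\ell}(z)\,j(\sigma_{\ell},z)^{-2k-2}\Theta_{\Delta,r}(\tau,\sigma_{\ell}z,\varphi_{Sh,k})}\,d\bar{z}$ with $F_{\ell}=F|_{-2k}\sigma_{\ell}$, and inserting the twisted Shintani growth estimate of Proposition~\ref{prop:growththeta}(3). The square-exponentially small remainder integrates to zero, the $x$-integral over each period picks out the constant Fourier coefficient $a_{\ell}^{+}(0)$, and the leading term decays for $k>0$ while surviving for $k=0$, producing the cusp contribution $\tfrac{1}{2N}\sum_{\ell}\varepsilon_{\ell}\overline{a_{\ell}^{+}(0)\Theta_{\Delta,r,\ell,1}(\tau)}$.

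The main obstacle is the bookkeeping of the constant in this $k=0$ cusp term: the bare substitution $N \to N/|\Delta|$ in the growth estimate is \emph{not} by itself enough, because the width data $\alpha_{\ell},\beta_{\ell},\varepsilon_{\ell}$ entering Proposition~\ref{prop:growththeta} are computed for the rescaled lattice $\Delta L$ rather than for $L$. Since $\sigma_{\ell}^{-1}(\Delta L)=\Delta\,\sigma_{\ell}^{-1}L$, the primitive isotropic vector of $\ell_{\infty}\cap\sigma_{\ell}^{-1}(\Delta L)$ scales by $|\Delta|$, so $\beta_{\ell}$ must be replaced by $|\Delta|\beta_{\ell}$ while $\alpha_{\ell}$ (a datum of $\Gamma_{0}(N)$ alone) is unchanged; hence the effective $\varepsilon_{\ell}$ becomes $\varepsilon_{\ell}/|\Delta|$. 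The net effect is that the factor $1/\sqrt{N/|\Delta|}=\sqrt{|\Delta|}/\sqrt{N}$ in the growth estimate together with the rescaled width $|\Delta|\beta_{\ell}$ contributes $1/(\sqrt{N}\sqrt{|\Delta|}\,\beta_{\ell})$, and multiplying by the overall constant $\sqrt{|\Delta|}/(2\sqrt{N})$ from the twisted Lemma~\ref{lm:relshinmillson} collapses all $|\Delta|$-dependence, leaving exactly $\tfrac{1}{2N}$ and keeping $\varepsilon_{\ell}$ in the final formula referring to the original lattice $L$. I would carry out this constant-chasing carefully and verify, again as in Proposition~\ref{prop:xidiagram}, that the remaining boundary integrals arising from $\partial M_{T}$ vanish in the limit.
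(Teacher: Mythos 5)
Your proof is correct, but it takes a genuinely different route from the paper's. The paper does not re-run the untwisted argument on the twisted kernels; instead it writes $\IM_{\Delta,r}(F,\tau) = [\Gamma_{0}(N):\Gamma_{\Delta}]^{-1}\sum_{h}\langle \psi_{\Delta,r}(\e_{h}),\overline{\IM(F,\tau,\dgdelta,\Gamma_{\Delta})}\rangle \e_{h}$, i.e.\ it expresses the twisted lift through the \emph{untwisted} Millson lift attached to the rescaled lattice $\Delta L$ and the subgroup $\Gamma_{\Delta}\subseteq\Gamma_{0}(N)$ acting trivially on $\dgdelta$, applies Proposition~\ref{prop:xidiagram} verbatim to that lift, and then converts back using $\varepsilon(\Delta)_{\ell} = \frac{[\Gamma_{0}(N)_{\ell}:(\Gamma_{\Delta})_{\ell}]}{|\Delta|}\varepsilon_{\ell}$ together with the analogous decomposition of $\ISh_{\Delta,r}$. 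The two routes agree, and your constant-chasing --- in particular the observation that the substitution $N\mapsto N/|\Delta|$ must be accompanied by $\beta_{\ell}\mapsto|\Delta|\beta_{\ell}$ because the primitive isotropic vector of $\ell_{\infty}\cap\sigma_{\ell}^{-1}(\Delta L)$ scales by $|\Delta|$ --- is exactly the point where a naive reading of the ``same growth estimates with $N$ replaced by $N/|\Delta|$'' remark would produce the wrong cusp constant $\frac{|\Delta|}{2N}$ instead of $\frac{1}{2N}$; the paper meets the same cancellation in the form $\frac{|\Delta|}{2N}\cdot\frac{1}{|\Delta|}\varepsilon_{\ell}$. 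What your approach buys is that you never pass to the smaller group $\Gamma_{\Delta}$ and so avoid re-aggregating the cusps of $\Gamma_{\Delta}\setminus\Iso(V)$ into $\Gamma_{0}(N)$-classes; what it costs is that you must verify the twisted analogues of Lemma~\ref{lm:relshinmillson} and Proposition~\ref{prop:growththeta} directly (the paper only asserts these in passing), whereas the paper's reduction leans solely on the already-proved untwisted statements.
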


	\begin{proof}
		Let us assume $k = 0$ for simplicity. Following the approach of \cite{ae} we write
		\begin{align}\label{eq:TwistedMillsonLiftUntwisted}
		\IM_{\Delta,r}(F,\tau) = \frac{1}{[\Gamma_{0}(N):\Gamma_{\Delta}]}\sum_{ h\in L'/L}\left\langle \psi_{\Delta,r}(\e_{h}),\overline{\IM(F,\tau,\dgdelta,\Gamma_{\Delta})}\right\rangle \e_{h},
		\end{align}
		where
		\[
		\IM(F,\tau,\dgdelta,\Gamma_{\Delta}) = \lim_{T\to \infty}\int_{M(\Delta)_{T}}F(z)\Theta_{\dgdelta}(\tau,z,\psi_{M,k})d\mu(z)
		\]
		is the untwisted Millson theta lift for the lattice $\Delta L$ and the group $\Gamma_{\Delta}$ consisting of all elements in $\Gamma_{0}(N)$ which act trivially on $\dgdelta$, and $M(\Delta)_{T}$ is the truncated version of the curve $M(\Delta) = \Gamma_{\Delta} \setminus \h$. By Proposition \ref{prop:xidiagram} we have
		\begin{align*}
		&\xi_{1/2,\tau}(\IM(F,\tau,\dgdelta,\Gamma_{\Delta})) \\
		&= -\frac{\sqrt{|\Delta|}}{2\sqrt{N}}\ISh(\xi_{0,z}F,\tau,\dgdelta,\Gamma_{\Delta})+\frac{|\Delta|}{2N}\sum_{\ell \in \Gamma_{\Delta} \setminus \Iso(V)}\varepsilon(\Delta)_{\ell}\overline{a_{\ell}^{+}(0)\Theta_{\dgdelta,\ell,1}(\tau)}.
		\end{align*}
		Now a short calculation, using $\varepsilon(\Delta)_{\ell} = \frac{[\Gamma_{0}(N)_{\ell}:(\Gamma_{\Delta})_{\ell}]}{|\Delta|}\varepsilon_{\ell}$, the decomposition \eqref{eq:TwistedMillsonLiftUntwisted} and the analogous decomposition for $\ISh_{\Delta,r}(\xi_{0,z}F,\tau)$, yields the result.
	\end{proof}

%Using these theta functions we define twisted analogs $I^{M}_{\Delta,r}(\tau,F)$ and $I^{KM}_{\Delta,r}(\tau,F)$ of the two theta lifts in Section ??. 

	This relation gives an interesting criterion for the non-vanishing of the twisted $L$-function of a newform at the critical point.
	
	\begin{theorem}
		Let $F \in H_{-2k}^{+}(\Gamma_{0}(N))$, with vanishing constant terms at all cusps if $k = 0$, such that $G =\xi_{-2k}F \in S_{2k+2}(\Gamma_{0}(N))$ is a normalized newform. For $\Delta < 0$ with $(\Delta,N) = 1$ the lift $\IM_{\Delta,r}(F,\tau)$ is weakly holomorphic if and only if $L(G,\chi_{\Delta},k+1) = 0$.
	\end{theorem}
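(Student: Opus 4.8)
The plan is to reduce the weak holomorphicity of $\IM_{\Delta,r}(F,\tau)$ to the vanishing of a twisted Shintani lift, and then to identify that vanishing with the vanishing of the central twisted $L$-value by appealing to the Waldspurger--Kohnen--Zagier theory.

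A harmonic weak Maass form of weight $1/2-k$ for $\widetilde{\rho}_{L}$ is weakly holomorphic exactly when it lies in the kernel of $\xi_{1/2-k}$, since this operator maps $H^{+}_{1/2-k,\widetilde{\rho}_{L}}$ onto cusp forms with kernel precisely the weakly holomorphic forms $M^{!}_{1/2-k,\widetilde{\rho}_{L}}$. Hence $\IM_{\Delta,r}(F,\tau)$ is weakly holomorphic if and only if $\xi_{1/2-k,\tau}\IM_{\Delta,r}(F,\tau)=0$. By Proposition~\ref{prop:xidiagramtwisted} we have, for $k>0$,
\[
\xi_{1/2-k,\tau}\IM_{\Delta,r}(F,\tau) = -\frac{\sqrt{|\Delta|}}{2\sqrt{N}}\,\ISh_{\Delta,r}(G,\tau),
\]
where $G=\xi_{-2k}F$, and for $k=0$ the same identity holds once we use the hypothesis that all constant terms $a_{\ell}^{+}(0)$ of $F$ vanish, which kills the additional contribution $\tfrac{1}{2N}\sum_{\ell}\varepsilon_{\ell}\overline{a_{\ell}^{+}(0)\Theta_{\Delta,r,\ell,1}(\tau)}$. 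As the prefactor is nonzero, $\IM_{\Delta,r}(F,\tau)$ is weakly holomorphic if and only if the twisted Shintani lift $\ISh_{\Delta,r}(G,\tau)$ vanishes identically.

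It remains to prove that, for a normalized newform $G\in S_{2k+2}(\Gamma_{0}(N))$ with $(\Delta,N)=1$, the Shintani lift $\ISh_{\Delta,r}(G,\tau)$ vanishes if and only if $L(G,\chi_{\Delta},k+1)=0$. Here I would first use that the Shintani lift is Hecke-equivariant, intertwining the Hecke operator $T_{p}$ acting on $G$ with $T_{p^{2}}$ acting on the lift; therefore $\ISh_{\Delta,r}(G,\tau)$ is a Hecke eigenform in the plus space of weight $k+3/2$ whose eigenvalue system agrees with that of the half-integral weight Shimura correspondent $\widetilde{G}$ of $G$. By Kohnen's multiplicity-one theorem for the plus space this eigenspace is one-dimensional, so $\ISh_{\Delta,r}(G,\tau)=\lambda\,\widetilde{G}$ for a scalar $\lambda$, and the question becomes whether $\lambda=0$.

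To determine $\lambda$ I would compute the Petersson norm of the lift. Writing $\ISh_{\Delta,r}$ as the theta lift against $\overline{\Theta_{\Delta,r}(\tau,z,\varphi_{Sh,k})}$ and unfolding the Rankin--Selberg integral $\langle \ISh_{\Delta,r}(G),\ISh_{\Delta,r}(G)\rangle$ against the theta kernel by the standard seesaw/doubling argument, one obtains an identity of the shape
\[
\langle \ISh_{\Delta,r}(G),\ISh_{\Delta,r}(G)\rangle = c\cdot L(G,\chi_{\Delta},k+1)\,\langle G,G\rangle
\]
with a nonzero constant $c$ depending only on $k$, $N$ and $\Delta$; equivalently, one may invoke the Waldspurger--Kohnen--Zagier formula directly (cf.~\cite{Waldspurger}), which expresses the relevant Fourier coefficient of $\ISh_{\Delta,r}(G)$ — a twisted trace of geodesic cycle integrals of $G$, cf.~the Fourier expansion of $\ISh$ above — as a nonzero multiple of $L(G,\chi_{\Delta},k+1)$ times a coefficient of $\widetilde{G}$. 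In either formulation the lift vanishes precisely when the central value $L(G,\chi_{\Delta},k+1)$ does, which completes the proof. The only non-formal step, and hence the main obstacle, is exactly this period--$L$-value identity; one must also keep track of the sign condition on $\Delta$ (so that the Shintani lift and its plus-space image are defined) and use $(\Delta,N)=1$ together with $G$ being a newform to ensure that $\widetilde{G}$ is a nonzero plus-space form, so that $\lambda=0$ really is equivalent to the vanishing of the $L$-value.
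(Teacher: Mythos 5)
Your reduction step is exactly the paper's: you use Proposition~\ref{prop:xidiagramtwisted}, note that the extra unary theta term for $k=0$ dies by the vanishing-constant-term hypothesis, and conclude that weak holomorphicity of $\IM_{\Delta,r}(F,\tau)$ is equivalent to $\ISh_{\Delta,r}(G,\tau)=0$ because $\ker\xi_{1/2-k}$ on harmonic Maass forms is precisely the weakly holomorphic forms. Where you diverge is the second step. The paper does not pass through Hecke equivariance, Kohnen's multiplicity one, or a Petersson norm computation at all: it quotes Corollary~2 in Section~II.4 of \cite{gkz}, which says that for a normalized newform $G$ the square of the absolute value of the $D$-th coefficient of $\ISh_{\Delta,r}(G,\tau)$ (for $D<0$ fundamental with $(D,N)=1$) equals, up to nonzero factors, $L(G,\chi_{\Delta},k+1)L(G,\chi_{D},k+1)$. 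This single coefficient formula gives both directions immediately: if $L(G,\chi_{\Delta},k+1)=0$ then all fundamental coefficients vanish and hence the lift vanishes, and conversely the vanishing of the lift kills its $\Delta$-th coefficient, which is a nonzero multiple of $L(G,\chi_{\Delta},k+1)^{2}$. Your route buys a conceptually clean statement ($\ISh_{\Delta,r}(G)=\lambda\widetilde{G}$ with $|\lambda|^{2}$ proportional to the central $L$-value), but it costs you two things the paper's citation supplies for free: (i) Kohnen's multiplicity-one theorem in the plus space, which is delicate outside the squarefree-level setting while the theorem here is stated for general $N$; and (ii) the period--$L$-value identity itself, which you explicitly leave as ``the main obstacle'' rather than proving or precisely citing. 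Since that identity is the entire content of the second half of the argument, you should replace the sketched Rankin--Selberg computation by a concrete reference --- the Gross--Kohnen--Zagier coefficient formula is the one the paper uses, and it also makes your auxiliary claim that ``vanishing of all fundamental coefficients forces vanishing of the lift'' unnecessary in the forward direction.
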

	
	\begin{proof}
		By the last proposition, $\IM_{\Delta,r}(F,\tau)$ is weakly holomorphic if and only if the Shintani lift $\ISh_{\Delta,r}(G,\tau)$ vanishes. Since $G$ is a normalized newform, Corollary $2$ in Section II.4. of \cite{gkz} shows that the square of the absolute value of the $D$-th coefficient ($D < 0$ with $(D,N) = 1$ a fundamental discriminant) of $\ISh_{\Delta,r}(G,\tau)$ (viewed as a Jacobi form) is up to non-zero factors given by $L(G,\chi_{\Delta},k+1)L(G,\chi_{D},k+1)$. If $L(G,\chi_{\Delta},k+1) = 0$, then all fundamental coefficients of $\ISh_{\Delta,r}(G,\tau)$ vanish, which implies $\ISh_{\Delta,r}(G,\tau) = 0$. Conversely, the vanishing of the Shintani lift in particular means the vanishing of its $\Delta$-th coefficient, i.e.~$L(G,\chi_{\Delta},k+1)^{2} = 0$. This completes the proof.
	\end{proof}

To describe the Fourier coefficients of the twisted Millson lift we introduce twisted traces of CM values and cycle integrals.
%Let
%$
% L_{\abs{\Delta}m,rh}^+=\left\{\lambda=\left(\begin{smallmatrix}
%                                         b/2N&-a/N\\c&-b/2N
%                                        \end{smallmatrix}\right)\in L_{\abs{\Delta}m,rh}\,:\, a\geq 0\right\},
%$
%and similarly
%$
% L_{\abs{\Delta}m,rh}^-=\left\{\lambda=\left(\begin{smallmatrix}
%                                         b/2N&-a/N\\c&-b/2N
%                                        \end{smallmatrix}\right)\in L_{\abs{\Delta}m,rh}\,:\, a<0\right\}.
%$
%Obviously, we have $ L_{\abs{\Delta}m,rh}= L_{\abs{\Delta}m,rh}^+ \cup L_{\abs{\Delta}m,rh}^-$.
%We define $Z^+_{\Delta,r}(m,h)$ and $Z^-_{\Delta,r}(m,h)$ correspondingly. %Similarly as in Lemma 5.1 of \cite{BrOno} one can show that $\widetilde{Z}_{\Delta,r}(m,h)=Z^+_{\Delta,r}(m,h)-Z^-_{\Delta,r}(m,h)$ is defined over $\Q(\sqrt{\Delta},\sqrt{m})$.

%Let $k\geq 0$ and let $F$ be a harmonic Maass form of weight $-2k$ for $\G_0(N)$ in $H^+_{-2k}(N)$. We set $\partial F:=R_{-2k}^k(F)$ which is automorphic of weight $0$.
If $m\in\Q_{>0}$ with $m \equiv \sgn(\Delta)Q(h)\ (\Z)$ and $h\in \dg$ we define the twisted trace of a $\Gamma_{0}(N)$-invariant function $F$ by
 \begin{align*}
% \mt^+_{\Delta,r}(F;m,h)&=\sum_{\lambda\in \G_0(N)\setminus L^+_{\abs{\Delta}m,rh}}\frac{\chi_\Delta(\lambda)}{\abs{\overline\G_\lambda}} \partial F(D_\lambda)\\\
% \mt^-_{\Delta,r}(F;m,h)&=\sum_{\lambda\in \G_0(N)\setminus L^-_{\abs{\Delta}m,rh}}\frac{\chi_\Delta(\lambda)}{\abs{\overline\G_\lambda}} \partial F(D_\lambda),\\
 \mt^{+}_{\Delta,r}(F;m,h)&=\sum_{X\in \G_0(N)\setminus L^{+}_{\abs{\Delta}m,rh}}\frac{\chi_\Delta(X)}{\abs{\overline\G_X}} F(D_X),
\end{align*}
and $\mt^-_{\Delta,r}(F;m,h)$ accordingly.

For $m\in\Q_{<0}$ with $m \equiv \sgn(\Delta)Q(h)\ (\Z)$ and $h\in \dg$ we define the twisted trace of a cusp form $G \in S_{2k+2}(\Gamma_{0}(N))$ by
\begin{align*}
\mt_{\Delta,r}(F;m,h) = \sum_{X\in \G_0(N)\setminus L_{\abs{\Delta}m,rh}}\chi_\Delta(X) \mathcal{C}(G,X),
\end{align*}
with the cycle integral $\mathcal{C}(G,X)$ defined in Section \ref{sec:fourexp}.
 
Finally, for $m = -N|\Delta|d^{2} < 0$ with $d \in \Q_{> 0}$ we define the twisted complementary trace by
\begin{align*}
	\mt^{c}_{\Delta,r}(F;-N|\Delta|d^{2},h) &= \sum_{X \in \Gamma_{0}(N) \setminus L_{-N|\Delta|^{2}d^{2},rh}}\chi_{\Delta}(X)\bigg(\sum_{w \in \Q_{<0}}a_{\ell_{X}}^{+}(w)(4\pi w)^{k}e^{2\pi i \Re(c(X))w} \\
	& \qquad \qquad \qquad \qquad +(-1)^{k+1}\sum_{w \in \Q_{<0}}a_{\ell_{-X}}^{+}(w)(4\pi w)^{k}e^{2\pi i \Re(c(-X))w}\bigg).
	\end{align*}
	
	\begin{theorem}\label{thm:fourierexpansiontwisted}
	Let $k \in \Z_{\geq 0}$ and let $F \in H_{-2k}^{+}(\Gamma)$. For $k > 0$ the $h$-th component of $\IM_{\Delta,r}(F,\tau)$ is given by
	\begin{align*}
	&\sum_{m > 0}\frac{1}{2\sqrt{m}}\left(\frac{\sqrt{N}}{4\pi \sqrt{|\Delta|m}}\right)^{k}\big(\mt_{\Delta,r}^{+}(R_{-2k}^{k}F;m,h) + (-1)^{k+1}\mt_{\Delta,r}^{-}(R_{-2k}^{k}F;m,h)\big)q^{m} \\
	&\quad +\sum_{d > 0}\frac{1}{2i\sqrt{N|\Delta|}d}\left(\frac{1}{4\pi i |\Delta|d}\right)^{k}\mt_{\Delta,r}^{c}(F;-N|\Delta|d^{2},h)q^{-N|\Delta|d^{2}} \\
	& \quad +\frac{\sqrt{|\Delta|}(-1)^{k}k!}{2\sqrt{N}\pi^{k+1}}\sum_{\substack{\ell \in \Gamma_{0}(N) \setminus \Iso(V) \\ \ell \cap (L + rh) \neq \emptyset}}a_{\ell}^{+}(0)\frac{\alpha_{\ell}}{\beta_{\ell}^{k+1}}d_{\ell}^{k+1} \\
	& \qquad \qquad \qquad \qquad\qquad \qquad\times\bigg(\sum_{\substack{n > 0 \\ n \equiv m_{\ell} (d_{\ell})}}\frac{\chi_{\Delta}(n)}{n^{s+1}} + (-1)^{k+1}\sgn(\Delta)\sum_{\substack{n > 0 \\ n \equiv -m_{\ell} (d_{\ell})}}\frac{\chi_{\Delta}(n)}{n^{s+1}} \bigg)\bigg|_{s = k} \\
	&\quad-\sum_{m < 0}\frac{1}{2(4\pi|m|)^{k+1/2}|\Delta|^{k/2}}\overline{\mt_{\Delta,r}(\xi_{-2k}F;m,h)}\Gamma\left(\tfrac{1}{2}-k,4\pi|m|v\right)q^{m},
	\end{align*}
	where $m_{\ell},d_{\ell} \in \Z_{\geq 0}$ are defined by $(m_{\ell},d_{\ell}) = 1$ and $k_{\ell}/\beta_{\ell} = m_{\ell}/d_{\ell}$.
	
	For $k = 0$ the $h$-th component of $\IM_{\Delta,r}(F,\tau)$ is given by the same formula as above but with the additional non-holomorphic terms
	\begin{align*}
	&\sum_{d > 0}\frac{1}{4i\sqrt{\pi N|\Delta|}d}\sum_{X \in \Gamma \setminus L_{-N|\Delta|d^{2},rh}}\chi_{\Delta}(X)\big(a_{\ell_{X}}^{+}(0)-a_{\ell_{-X}}^{+}(0)\big)\Gamma\left(\tfrac{1}{2},4\pi N|\Delta|d^{2}v\right)q^{-N|\Delta|d^{2}}.
	\end{align*}
	\end{theorem}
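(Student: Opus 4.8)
The plan is to avoid repeating the delicate analysis of Section \ref{sec:fourexp} and instead deduce the twisted expansion from the untwisted one (Theorem \ref{thm:fourierexpansion}) by means of the intertwiner $\psi_{\Delta,r}$, exactly as in the proof of Proposition \ref{prop:xidiagramtwisted}. First I would invoke the decomposition \eqref{eq:TwistedMillsonLiftUntwisted}, which expresses
\[
\IM_{\Delta,r}(F,\tau) = \frac{1}{[\Gamma_{0}(N):\Gamma_{\Delta}]}\sum_{h \in L'/L}\big\langle \psi_{\Delta,r}(\e_{h}),\overline{\IM(F,\tau,\dgdelta,\Gamma_{\Delta})}\big\rangle \e_{h},
\]
reducing everything to the untwisted Millson lift attached to the rescaled lattice $\Delta L$ (with quadratic form $Q_{\Delta}$, so that the level parameter $N$ is replaced by $N/|\Delta|$) and to the subgroup $\Gamma_{\Delta}\subseteq\Gamma_{0}(N)$ acting trivially on $\dgdelta$. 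All transformation, growth, and differential properties needed for this reduction were already noted to carry over to the rescaled setting.

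The second step is to apply Theorem \ref{thm:fourierexpansion} componentwise to $\IM(F,\tau,\dgdelta,\Gamma_{\Delta})$, reading off its $\delta$-component for each $\delta\in\dgdelta$ with $N$ replaced by $N/|\Delta|$ throughout. Here the Heegner points of discriminant $m$ for $Q_{\Delta}$ are precisely the vectors $X$ with $Q(X)=|\Delta|m$, and the geodesics of negative discriminant correspond to vectors of $Q$-norm $|\Delta|m$; this is the origin of the shift $m\mapsto|\Delta|m$ in the indices of the traces, while the rescaling of $Q_{X}$ and of the level parameter produces the extra powers of $|\Delta|$ in the normalizing constants. Pairing the result against $\psi_{\Delta,r}(\e_{h})$ and using the definition of the intertwiner (Proposition \ref{prop:intertwiner}) converts the untwisted sums over $\Gamma_{\Delta}$-orbits of vectors in $\dgdelta$ into sums over $\Gamma_{0}(N)$-orbits of vectors in $L_{|\Delta|m,rh}$ weighted by the genus character $\chi_{\Delta}$; after dividing by $[\Gamma_{0}(N):\Gamma_{\Delta}]$ and collecting the width and stabilizer factors this recovers the twisted traces $\mt^{\pm}_{\Delta,r}$, $\mt_{\Delta,r}$, and $\mt^{c}_{\Delta,r}$ with the stated constants. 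The constant ($m=0$) term needs a little more work: the Hurwitz-zeta contribution of each cusp of $\Gamma_{\Delta}$ must be reorganized, after weighting by $\chi_{\Delta}$ and summing over the cusps above a given cusp of $\Gamma_{0}(N)$, into the twisted Dirichlet-type series $\sum_{n\equiv\pm m_{\ell}\,(d_{\ell})}\chi_{\Delta}(n)n^{-s-1}$ of the third line, the factor $\sgn(\Delta)$ reflecting the distinction between $\rho_{L}$ and $\overline{\rho}_{L}$ encoded in $\widetilde{\rho}_{L}$. The additional non-holomorphic terms for $k=0$ arise in the same manner from the corresponding terms in Theorem \ref{thm:fourierexpansion}.

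The main obstacle will be the bookkeeping in this reassembly rather than any new idea. One has to match the cusp data $(\alpha_{\ell},\beta_{\ell},k_{\ell})$ of $\Gamma_{\Delta}$ with those of $\Gamma_{0}(N)$, verify that the global index $[\Gamma_{0}(N):\Gamma_{\Delta}]$ and the local indices $[\Gamma_{0}(N)_{\ell}:(\Gamma_{\Delta})_{\ell}]$ cancel against the powers of $|\Delta|$ from the rescaling (as already witnessed by $\varepsilon(\Delta)_{\ell}=\frac{[\Gamma_{0}(N)_{\ell}:(\Gamma_{\Delta})_{\ell}]}{|\Delta|}\varepsilon_{\ell}$ in the proof of Proposition \ref{prop:xidiagramtwisted}), and confirm that $\chi_{\Delta}$ correctly redistributes the orbit sums into the twisted traces. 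I would therefore present the argument as a careful application of Theorem \ref{thm:fourierexpansion} combined with the intertwiner formalism, leaving the purely computational verification of the constants to the reader where it mirrors the untwisted case.
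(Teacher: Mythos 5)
Your proposal follows the paper's own proof essentially verbatim: both reduce to the untwisted expansion via the decomposition \eqref{eq:TwistedMillsonLiftUntwisted} for the rescaled lattice $\Delta L$ and the subgroup $\Gamma_{\Delta}$, twist the positive- and negative-index coefficients by routine bookkeeping (as in \cite{ae}), and obtain the constant term by matching the cusp data $\alpha_{\ell}^{(\Delta)},\beta_{\ell}^{(\Delta)},k_{\ell}^{(\Delta)}$ and reassembling the Hurwitz zeta contributions, weighted by $\chi_{\Delta}$ over the residues $n$ mod $|\Delta|$, into the twisted Dirichlet series. The approach and the key points (index cancellations, the identity $\chi_{\Delta}(nX_{\ell}+(rh)_{\ell})=\chi_{\Delta}(nd_{\ell}+m_{\ell})$ implicit in your reorganization) coincide with the paper's argument.
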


	\begin{proof}
		As in the proof of Proposition \ref{prop:xidiagramtwisted} we write
		\begin{align*}
		\IM_{\Delta,r}(F,\tau) = \frac{1}{[\Gamma_{0}(N):\Gamma_{\Delta}]}\sum_{ h\in L'/L}\left\langle \psi_{\Delta,r}(\e_{h}),\overline{\IM(F,\tau,\dgdelta,\Gamma_{\Delta})}\right\rangle \e_{h}.
		\end{align*}
		We see that the coefficients of the twisted lift can be obtained from the coefficients of the untwisted lift. The twisting of the coefficients of positive and negative index is quite straightforward and can be done as in the proof of Theorem 5.5. in \cite{ae}. 
		
		We sketch the twisting of the constant coefficient. For $h \in \dg$ with $Q(h) \equiv 0(\Z)$ the $(0,h)-$th coefficient of $\IM_{\Delta,r}(F,\tau)$ is given by
		\begin{align*}
	&\frac{\sqrt{|\Delta|}(-1)^{k}k!}{2\sqrt{N}\pi^{k+1}} \frac{1}{[\Gamma_{0}(N):\Gamma_{\Delta}]} \sum_{\substack{\delta \in \dgdelta \\ \pi(\delta) = rh \\ Q_{\Delta}(\delta) \equiv 0 (\Z)} }\chi_{\Delta}(\delta) \\
	&\sum_{\substack{\ell \in \Gamma_{\Delta} \setminus \Iso(V) \\ \ell \cap (\Delta L + \delta) \neq \emptyset}}a_{\ell}^{+}(0)\frac{\alpha_{\ell}^{(\Delta)}}{(\beta_{\ell}^{(\Delta)})^{k+1}}(\zeta(s,k_{\ell}^{(\Delta)}/\beta_{\ell}^{(\Delta)}) + (-1)^{k+1}\zeta(s,1-k_{\ell}^{(\Delta)}/\beta_{\ell}^{(\Delta)}))|_{s = k+1},
	\end{align*}
	where the superscript $(\Delta)$ indicates that the corresponding quantity is taken with respect to the lattice $\Delta L$ with quadratic form $Q_{\Delta}$ and the group $\Gamma_{\Delta}$. It is easy to see that $\beta_{\ell}^{(\Delta)} = |\Delta|\beta_{\ell}$ and $\alpha_{\ell}^{(\Delta)} = [\Gamma_{0}(N)_{\ell}:(\Gamma_{\Delta})_{\ell}]\alpha_{\ell}$, but $k_{\ell}^{(\Delta)}$ is a bit more complicated:
	
	Let $X_{\ell} \in \ell \cap L$ be a positively oriented primitive generator of $\ell$. If $\ell \cap (\Delta L + \delta) \neq \emptyset$ with $\pi(\delta) = rh$ then also $\ell \cap (L + rh) \neq \emptyset$. For a fixed isotropic line $\ell$, a system of representatives for the elements $\delta \in \mathcal{D}(\Delta)$ with $\pi(\delta) = rh, Q_{\Delta}(\delta) \equiv 0(\Z)$ and $\ell \cap (\Delta L + \delta) \neq \emptyset$ is given by the vectors $n X_{\ell} + (rh)_{\ell}$ with $n$ running modulo $|\Delta|$ and some $(rh)_{\ell} \in \ell \cap (L + rh)$. In particular, we have $k_{\ell}^{(\Delta)}/\beta_{\ell}^{(\Delta)} = n/|\Delta| + m_{\ell}/|\Delta|d_{\ell}$. Using the assumption that $\Delta$ is a fundamental discriminant it is not hard to show that $(\Delta,d_{\ell}) = 1$, $\chi_{\Delta}(d_{\ell}) = 1$, and $\chi_{\Delta}(nX_{\ell} + (rh)_{\ell}) = \chi_{\Delta}(nd_{\ell} + m_{\ell})$. Putting everything together, we obtain the twisted constant coefficient.
	\end{proof}
	
%	\begin{remark}
%		For $h = 0$ we have $m_{\ell} = 0$ and the Dirichlet series appearing in the constant coefficient for $h = 0$ is the usual Dirichlet series of $\chi_{\Delta}$, evaluated at $s = k+1$.
%	\end{remark}

	In the same way, we obtain the Fourier expansion of the $(\Delta,r)$-th Shintani lift:

	\begin{theorem}\label{NonvanishingTheorem}
		Let $k \in \Z_{\geq 0}$ and $G \in S_{2k+2}(\Gamma_{0}(N))$. Then the $h$-th component of $\ISh_{\Delta,r}(G,\tau)$ is given by
		\[
		\ISh_{\Delta,r}(G,\tau)_{h} = -\frac{\sqrt{N}}{\sqrt{|\Delta|}}\sum_{m > 0}\frac{1}{|\Delta|^{k/2}}\mt_{\Delta,r}(G;-m,h)q^{m}.
		\]
	\end{theorem}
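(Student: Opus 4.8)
The plan is to avoid a fresh unfolding computation and instead reduce everything to the already established Fourier expansion of the untwisted Shintani lift, untwisting via the intertwiner $\psi_{\Delta,r}$ exactly as in the proof of Theorem~\ref{thm:fourierexpansiontwisted}. The key simplification is that the Shintani lift of a cusp form is again a cusp form, so there are no constant terms, no complementary traces, and no surviving boundary integrals; only the positive-index coefficients have to be twisted, and each of them is a single cycle-integral trace. First I would record the analogue of \eqref{eq:TwistedMillsonLiftUntwisted},
\[
\ISh_{\Delta,r}(G,\tau) = \frac{1}{[\Gamma_0(N):\Gamma_\Delta]}\sum_{h\in L'/L}\left\langle \psi_{\Delta,r}(\e_h),\overline{\ISh(G,\tau,\dgdelta,\Gamma_\Delta)}\right\rangle \e_h,
\]
where $\ISh(G,\tau,\dgdelta,\Gamma_\Delta)$ is the untwisted Shintani lift attached to the rescaled lattice $\Delta L$ with quadratic form $Q_\Delta$ and the group $\Gamma_\Delta$, exactly as in the proof of Proposition~\ref{prop:xidiagramtwisted}. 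This identity is immediate from the definition of the twisted theta function together with the intertwining property of $\psi_{\Delta,r}$ recorded in Proposition~\ref{prop:intertwiner}.

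Next I would apply the Fourier expansion of the untwisted Shintani lift to the data $(\Delta L, Q_\Delta, \Gamma_\Delta)$. This expresses the $\delta$-th coefficient of $\ISh(G,\tau,\dgdelta,\Gamma_\Delta)$ as a cycle-integral trace over $\Gamma_\Delta$-orbits of vectors $X\in\Delta L+\delta$ with $Q_\Delta(X)=-m$. Since $Q_\Delta(X)=-m$ is equivalent to $Q(X)=-|\Delta|m$, these are precisely the vectors entering the definition of $\mt_{\Delta,r}(G;-m,h)$, and the cosets $\Delta L+\delta$ with $\pi(\delta)=rh$ partition $L+rh$. The standard untwisting step then applies verbatim: inserting $\psi_{\Delta,r}$, dividing by $[\Gamma_0(N):\Gamma_\Delta]$, and using that $\chi_\Delta$ descends from $\dgdelta$ to these cosets collapses the double sum over $\delta$ and over $\Gamma_\Delta$-orbits into a single sum over $\Gamma_0(N)$-orbits weighted by $\chi_\Delta(X)$, i.e.\ into $\mt_{\Delta,r}(G;-m,h)$. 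This is carried out in detail for the Millson lift in the proof of Theorem~\ref{thm:fourierexpansiontwisted}, and for the analogous reduction in \cite[proof of Theorem~5.5]{ae}; the same argument transfers here with no change.

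The one point that genuinely requires care, and which I expect to be the main (though routine) obstacle, is the precise power of $|\Delta|$ in the normalizing constant. Passing from $L$ to $\Delta L$ replaces the parameter $N$ by $N/|\Delta|$, which accounts for the prefactor $\sqrt{N}/\sqrt{|\Delta|}$; separately, the homogeneity degree $k$ of the polynomial $Q_X^{k}$ appearing in the cycle integral interacts with the rescaling $Q\mapsto Q_\Delta$, and keeping $\mathcal{C}(G,X)$ normalized with the original $Q_X$ (as in the definition of $\mt_{\Delta,r}$) these contributions combine to the additional factor $|\Delta|^{-k/2}$, yielding the stated constant $-\tfrac{\sqrt N}{\sqrt{|\Delta|}}\,|\Delta|^{-k/2}$. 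Tracking these homogeneity factors consistently, and cross-checking them against the identical $|\Delta|^{-k/2}$ that appears in the negative-index (cycle-integral) coefficients of the twisted Millson lift in Theorem~\ref{thm:fourierexpansiontwisted}, is the only real calculation; everything else in the proof is formal.
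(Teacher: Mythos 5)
Your proposal follows exactly the route the paper takes: the paper's entire proof of this theorem is the remark ``in the same way'' pointing back to the proof of Theorem~\ref{thm:fourierexpansiontwisted}, i.e.\ the decomposition of the twisted lift into untwisted lifts for $(\Delta L, Q_\Delta, \Gamma_\Delta)$ via the intertwiner $\psi_{\Delta,r}$, followed by applying the untwisted Fourier expansion and collapsing the sums with the genus character, which is precisely what you describe (and you correctly note that the absence of constant terms, complementary traces and boundary contributions makes the Shintani case strictly easier). The only part you leave unexecuted is the bookkeeping of the powers of $|\Delta|$ coming from $N\mapsto N/|\Delta|$ and from the rescaling $Q_X\mapsto Q_X/|\Delta|$ in the degree-$k$ polynomial of the cycle integral; since you flag this yourself and your proposed cross-check against the $|\Delta|^{-k/2}$ in the non-holomorphic coefficients of Theorem~\ref{thm:fourierexpansiontwisted} together with Proposition~\ref{prop:xidiagramtwisted} and the surjectivity of $\xi_{-2k}$ in fact pins down the constant, this matches the paper's level of detail.
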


	\begin{remark}
		Let $N = 1$. In this case the twisted Millson theta function vanishes identically if $(-1)^{k}\Delta > 0$, which easily follows from replacing $X$ by $-X$ in the sum. On the other hand, the results of \cite[Section 5]{ez} show that for $(-1)^{k}\Delta < 0$ the map $f_{0}(\tau)\e_{0} + f_{1}(\tau)\e_{1} \mapsto f_{0}(4\tau)+f_{1}(4\tau)$ defines an isomorphism of $H_{1/2-k,\widetilde{\rho}_{L}}^{+}$ with the subspace of $H_{1/2-k}^{+}(\Gamma_{0}(4))$ of scalar valued harmonic weak Maass forms satisfying the Kohnen plus space condition. Using this identification we can derive the results stated in the introduction from the theorems in this section. Since $\Delta \equiv r^{2}(4)$, $r$ mod $2$ is already determined by $\Delta$, so we can drop it from the notation. The formula for the coefficients of positive index of $\IM_{\Delta}$ follows from $\mt_{\Delta}^{-}(R_{-2k}^{k}F;d) = \sgn(\Delta)\mt_{\Delta}^{+}(R_{-2k}^{k}F;d)$, which can be seen using the map $[a,b,c] \mapsto [-a,b,-c]$, and the formula for the principal part is obtained by rewriting the twisted complementary trace as described in \cite[Proposition 5.7.]{ae}.
	\end{remark}

\begin{remark} As in \cite[Thm 5.1]{alfes} one can show that $\LambdaM_{\Delta,r}$ is orthogonal to cusp forms with respect to the regularized Petersson inner product. In terms of the bilinear pairing $\{\cdot,\cdot\}$ introduced in \cite{brfu04} (or rather its extension given in \cite[Proposition 2.3]{alfes}) this means
\begin{align*}
 \sum_{h\in L'/L}\sum_{\substack{m \in \Q\\m\equiv \sgn(\Delta)Q(h)\, (\Z)}}  c_f^+(-m,h) a^{+}_{\LambdaM_{\Delta,r}(F,\tau)}(m,h) = 0,
\end{align*}
for each $f \in H_{3/2+k,\overline{\widetilde{\rho}}_{L}}^{+}$ with coefficients $c_{f}^{+}(m,h)$ and each $F \in H_{-2k}^{+}(\Gamma_{0}(N))$ such that $\LambdaM_{\Delta,r}(F,\tau)$ is weakly holomorphic. For $N = 1$ and $k = 0$, choosing $F = J = j-744$ and $f=\widetilde\Lambda^{\mathrm{M}}_{\Delta,r}(J,\tau)$, and using that the holomorphic Fourier coefficients of $\widetilde{\Lambda}^{\mathrm{M}}_{\Delta,r}(J,\tau)$ and $\widetilde\Lambda^{\mathrm{M}}_{\Delta,r}(J,\tau)$ are essentially given by the twisted traces of $J$ (compare \cite{ae}), one can recover duality results of Zagier \cite{zatr} for the coefficients of a basis $f_{d} = q^{-d} + O(1)$ for $M_{1/2}^{!,+}(\Gamma_{0}(4))$ and a basis $g_{\Delta} = q^{-\Delta} + O(1)$ for $M_{3/2}^{!,+}(\Gamma_{0}(4))$.
\end{remark}

\subsection{Extensions of the Millson and the Shintani theta lift}\label{Extensions}

In \cite{brfu04} a more general notion of harmonic weak Maass forms is considered. For $k \in \Z$ with $k \neq 1$ and a congruence subgroup $\Gamma$ of $\SL_{2}(\Q)$ the space $H_{k}(\Gamma)$ is defined similarly as the space $H_{k}^{+}(\Gamma)$ but with the growth condition replaced by the weaker requirement that the forms should be at most of linear exponential growth at all cusps. A form $F \in H_{k}(\Gamma)$ has a Fourier expansion with a holomorphic part $F^{+}$ and a non-holomorphic part $F^{-}$,
\begin{align*}
F(z) = F^{+}(z) + F^{-}(z) = \sum_{n \gg -\infty}a^{+}(n)e^{2\pi i n z} + a^{-}(0)y^{1-k}+\sum_{\substack{n \ll \infty \\ n \neq 0}}a^{-}(n)H(2\pi n y)e^{2\pi i nx},
\end{align*} 
where $H(w) = e^{-w}\int_{-2w}^{\infty}e^{-t}t^{-k}dt$, and there are analogous expansions at the other cusps. We consider the subspace $H_{k}^{0}(\Gamma)$ consisting of forms in $H_{k}(\Gamma)$ with vanishing constant terms $a^{-}(0)$ of the non-holomorphic parts at all cusps. It is mapped under $\xi_{k}$ to the space $S_{2-k}^{!}(\Gamma)$ of weakly holomorphic modular forms whose constant terms at all cusps vanish.

The nice observation here is that the proof of Proposition \ref{prop:convergencethetalift}  still goes through for $F \in H_{-2k}^{0}(\Gamma)$. Thus the Millson theta lift of $F \in H_{-2k}^{0}(\Gamma)$ converges to a harmonic function transforming like a modular form of weight $1/2-k$ for $\rho_{L}$. Similarly, the regularization of the Shintani lift also works for a weakly holomorphic modular form $G \in S_{2k+2}^{!}(\Gamma)$ and converges to a harmonic function transforming of weight $3/2+k$ for $\overline{\rho}_{L}$. Further, the relation between the Millson and the Shintani theta lift given in Proposition \ref{prop:xidiagram} still holds for $F \in H_{-2k}^{0}(\Gamma)$, which can be seen by exactly the same proof as for $F \in H_{-2k}^{+}(\Gamma)$.

The computation of the Fourier expansion of $\IM(F,\tau)$ for $F \in H_{-2k}^{0}(\Gamma)$ is almost the same as before, but we have to be careful with the main integral in the computation of the coefficients of negative index since $\xi_{-2k}F$ need no longer be a cusp form. A thorough analysis shows that the non-holomorphic coefficients are now given by traces of regularized cycle integrals of $\xi_{-2k}F$ as introduced in \cite{brifrike}, and that there is now also a contribution of the coefficients $a_{\ell}^{-}(w)$ for $w > 0$ to the complementary trace. Similarly, the coefficients of the Shintani lift of $G \in S_{2k+2}^{!}(\Gamma)$ are given by traces of regularized cycle integrals of $G$ as in \cite{briguka}.

The twisting of these extended lifts proceeds in the same way as before. 
We obtain the following extension of (the twisted versions of) Proposition \ref{prop:xidiagram} and Theorem \ref{thm:PropertiesLifts}.

\begin{theorem}
		Let $k \in \Z_{\geq 0}$.
		\begin{enumerate}
			\item The Millson theta lift $\IM_{\Delta,r}$ maps $H^{0}_{-2k}(\Gamma_{0}(N))$ to $H^{+}_{1/2-k,\widetilde{\rho}_{L}}$.
			\item The Shintani theta lift $\ISh_{\Delta,r}$ maps $S^{!}_{2k+2}(\Gamma_{0}(N))$ to $S_{3/2+k,\overline{\widetilde{\rho}}_{L}}$.
			\item The relation between the Millson and the Shintani theta lift given in Proposition \ref{prop:xidiagramtwisted} also holds for $F \in H^{0}_{-2k}(\Gamma_{0}(N))$.
		\end{enumerate}
	\end{theorem}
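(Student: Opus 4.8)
The plan is to establish all three statements by revisiting, one at a time, the arguments already given in the rapidly decaying case --- namely Proposition~\ref{prop:convergencethetalift}, Proposition~\ref{prop:xidiagramtwisted} together with its untwisted model Proposition~\ref{prop:xidiagram}, and the Fourier computation of Section~\ref{sec:fourexp} --- and verifying that each step survives the weaker growth hypothesis. The single mechanism that makes this possible is the square-exponential decay of all three theta kernels at the cusps recorded in Proposition~\ref{prop:growththeta}: it dominates the at most linear exponential growth of $F \in H^{0}_{-2k}(\Gamma_{0}(N))$ and of $G \in S^{!}_{2k+2}(\Gamma_{0}(N))$, so that every product $F\cdot\Theta$ resp.\ $G\cdot\overline{\Theta}$ that appeared before remains integrable after the usual regularization. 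The two vanishing-constant-term conditions built into the definitions of $H^{0}$ and $S^{!}$ are precisely what is needed to control the one non-decaying ingredient, the polynomial growth terms attached to the cusp theta functions $\Theta_{\ell,k}$.

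First I would redo the convergence and harmonicity parts of (1) following Proposition~\ref{prop:convergencethetalift}. Reducing to a neighbourhood of each cusp $\ell$ and unfolding, the only contribution that is not square-exponentially small comes, for $k>0$, from the polynomial term $-y^{k+1}\frac{k}{2\pi\beta_{\ell}}v^{k-1/2}\Theta_{\ell,k-1}(\tau)$ of Proposition~\ref{prop:growththeta}, which is independent of $x$; integrating over $x\in[0,\alpha_{\ell}]$ extracts the zeroth $x$-Fourier coefficient of $F_{\ell}$. Because $F\in H^{0}_{-2k}$ forces $a_{\ell}^{-}(0)=0$, this coefficient equals the constant $a_{\ell}^{+}(0)$, and $\int^{T} a_{\ell}^{+}(0)y^{-k-1}\,dy$ still converges, while the principal-part terms $a_{\ell}^{+}(n)q^{n}$ with $n\neq0$ and all of $F_{\ell}^{-}$ carry a nonzero $x$-frequency and hence integrate to $0$ against this $x$-independent piece. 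Harmonicity then follows verbatim from Lemma~\ref{lm:iddelta} and Stokes' Theorem, the boundary integrals vanishing by the same decay. The convergence half of (2) is entirely parallel: in $\int_{M} G\,\overline{\Theta(\tau,z,\varphi_{Sh,k})}\,y^{2k+2}\,d\mu$ the polynomial term $y^{-k}\frac{1}{\sqrt{N}\beta_{\ell}}\Theta_{\ell,k+1}(\tau)$ is again $x$-independent, so its contribution extracts the constant $x$-coefficient of $G_{\ell}$, which vanishes since forms in $S^{!}$ have vanishing constant terms at every cusp, and the remaining $O(e^{-Cy^{2}})$ is dominated.

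Next I would prove (3) by rerunning the proof of Proposition~\ref{prop:xidiagramtwisted}: Lemma~\ref{lm:relshinmillson} and Stokes' Theorem apply without change, and the only point needing attention is the boundary integral $\int_{\partial M_{T}}\overline{F\,\Theta(\tau,z,\varphi_{Sh,k})}\,d\bar z$. Its limit is governed by the polynomial term of the Shintani theta function, so it picks out $\overline{a_{\ell}^{+}(0)}\,T^{-k}\overline{\Theta_{\ell,k+1}(\tau)}$ (once more using $a_{\ell}^{-}(0)=0$), which vanishes for $k>0$ and reproduces the stated cusp contribution involving $\Theta_{\Delta,r,\ell,1}$ for $k=0$ --- exactly as in the $H^{+}$ case.

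Finally, to upgrade ``harmonic of the correct weight'' to genuine membership in $H^{+}_{1/2-k,\widetilde{\rho}_{L}}$ and $S_{3/2+k,\overline{\widetilde{\rho}}_{L}}$, I would recompute the Fourier expansion as in Section~\ref{sec:fourexp} and then combine the three items as in the proof of Theorem~\ref{thm:PropertiesLifts}, using the surjectivity of $\xi$. Here lies the main obstacle: in the negative-index coefficient computation the cusp form $\xi_{-2k}F$ is replaced by a merely weakly holomorphic $\xi_{-2k}F\in S^{!}_{2k+2}$, so the unfolded main integral no longer converges and must be regularized; a careful analysis should identify the resulting coefficients with traces of the regularized cycle integrals of~\cite{brifrike} (and, for the Shintani lift, those of~\cite{briguka}), while the previously decaying coefficients $a_{\ell}^{-}(w)$ with $w>0$ now also feed into the complementary trace. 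The delicate work is to show that these regularized quantities are well defined and that the principal part of the lift remains finite, which is what certifies that the image lies in $H^{+}$ (resp.\ is a cusp form) rather than in the larger space $H$.
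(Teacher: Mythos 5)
Your proposal is correct and follows essentially the same route as the paper: the paper likewise observes that the square-exponential decay of the theta kernels together with the vanishing of the constant terms $a_\ell^-(0)$ (resp.\ of the constant terms of $G$ at all cusps) makes the proofs of Proposition~\ref{prop:convergencethetalift} and Proposition~\ref{prop:xidiagram} go through verbatim for $H^{0}_{-2k}$ and $S^{!}_{2k+2}$, and then handles the negative-index Fourier coefficients via regularized cycle integrals as in \cite{brifrike} and \cite{briguka}, with the extra contribution of the $a_\ell^-(w)$, $w>0$, to the complementary trace. Your identification of where the hypotheses $a_\ell^-(0)=0$ enter and of the remaining delicate point (regularizing the unfolded main integral) matches the paper's argument.
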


	\section{Cycle integrals}
	
	In this section we prove some identities between the cycle integrals of $R_{-2k}^{2j+1}F, j \geq 0,$ and $\xi_{-2k}F$ for a harmonic weak Maass form $F \in H_{-2k}^{+}(\Gamma)$, where $\Gamma$ is some congruence subgroup of $\SL_{2}(\Q)$ again.
	
	\subsection{Closed geodesics}
		
	Let $X \in V$ with $Q(X) = m < 0$ such that $|m|/N$ is not a square in $\Q$, i.e.~the stabilizer $\overline{\Gamma}_{X}$ is infinite cyclic and $c(X) = \Gamma_{X}\setminus c_{X}$ is a closed geodesic. Further, let $G$ be some smooth function that transforms like a modular form of weight $2k+2$ under $\Gamma$ for some $k \in \Z$. Recall the definition of the cycle integral
	\[
	\mathcal{C}(G,X) = (-2\sqrt{|m|N}i)^{k}i\int_{1}^{\varepsilon^{2}}G_{g}(iy)y^{k}dy,
	\]
	where $g \in \SL_{2}(\R)$ is such that $g^{-1}Xg = \sqrt{|m|/N}\left(\begin{smallmatrix} 1 & 0 \\ 0 & - 1 \end{smallmatrix} \right)$, $\varepsilon > 1$ is such that $\left( \begin{smallmatrix}\varepsilon & 0 \\ 0 & \varepsilon^{-1}\end{smallmatrix}\right)$ generates the stabilizer of $g^{-1}Xg$ in $g^{-1}\overline{\Gamma}g$, and $G_{g} = G|_{2k+2}g$.

	\begin{proposition}\label{CycleIntegralInduction}
		Let $X \in V$ with $Q(X) = m < 0$ such that $|m|/N$ is not a square. Let $k \in \Z$ and $F \in H_{-2k}^{+}(\Gamma)$. For all integers $\ell \leq k$ we have
		\begin{align}\label{CycleInductionStart}
		\mathcal{C}(R_{-2k}^{k-\ell+1}F,X) = \frac{1}{(4|m|N)^{\ell}}\overline{\mathcal{C}(\xi_{-2\ell}R_{-2k}^{k-\ell}F,X)}.
		\end{align}
		Further, for $\ell \leq k-1$ we have
		\begin{align}\label{CycleInductionStepR}
		\mathcal{C}(R_{-2k}^{k-\ell+1}F,X) = 4|m|N(k-\ell)(k+\ell+1)\mathcal{C}(R_{-2k}^{k-\ell-1}F,X).
		\end{align}
%		and for $\ell \leq k-2$ we have
%		\begin{align}\label{CycleInductionStepXi}
%		\mathcal{C}(\xi_{-2\ell}R_{-2k}^{k-\ell}F,X) = \frac{1}{4|m|N}(k-\ell)(k+\ell+1)\mathcal{C}(\xi_{-2\ell-4}R_{-2k}^{k-\ell-2}F,X).
%		\end{align}
	\end{proposition}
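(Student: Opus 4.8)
The plan is to reduce both identities to a single ``master formula'' for the cycle integral of a raised form, obtained from one application of Stokes' theorem, and then to specialize it. Everything simplifies on the closed geodesic $c(X)$, because $z\perp X$ means $p_{z}(X)=0$; hence by the definition of $R(X,z)$ and the last rule in \eqref{eq:diffpQR} one has, along $c(X)$,
\[
R(X,z)=-(X,X)=2|m|,\qquad Q_{X}(z)Q_{X}(\bar z)=4N|m|\,y^{2},\qquad \partial_{z}Q_{X}(z)=-i\,y^{-1}Q_{X}(z),
\]
where the last equation follows from \eqref{eq:diffpQR} together with $p_{z}(X)=0$ and the holomorphy of $Q_{X}(z)$. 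Since conjugating a cycle integral preserves its orientation, and since $R_{k},L_{k},\xi_{k},\Delta_{k}$ all commute with the slash operator, I may (after replacing $F$ by $F|_{-2k}g$ and $X$ by $g^{-1}X$, as in \eqref{zTransformationRules}) assume $c(X)$ is a period of the imaginary axis with $Q_{X}(iy)=-2\sqrt{|m|N}\,iy$; the integrand of every cycle integral below is invariant under the generator $\left(\begin{smallmatrix}\varepsilon&0\\0&\varepsilon^{-1}\end{smallmatrix}\right)$ of the stabilizer, which is exactly what forces the boundary terms to vanish.

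First I would prove, for an arbitrary function $\Psi$ transforming with weight $-2\ell$, the identity
\[
\mathcal{C}(R_{-2\ell}\Psi,X)=\int_{c(X)}y^{-2}\bigl(L_{-2\ell}\Psi\bigr)(z)\,Q_{X}(z)^{-\ell}\,d\bar z .
\]
Writing $R_{-2\ell}\Psi=2i\partial_{z}\Psi-2\ell\,y^{-1}\Psi$ and $y^{-2}L_{-2\ell}\Psi=-2i\partial_{\bar z}\Psi$, the difference of the two sides equals $\int_{c(X)}\bigl(2i\,d\Psi-2\ell\,y^{-1}\Psi\,dz\bigr)Q_{X}(z)^{-\ell}$. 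Integrating the exact piece $2i\,d\Psi$ by parts and inserting $d\bigl(Q_{X}^{-\ell}\bigr)=i\ell\,y^{-1}Q_{X}^{-\ell}\,dz$ (from $\partial_{z}Q_{X}=-iy^{-1}Q_{X}$ on $c(X)$) produces a term that cancels the remaining one, while the boundary contribution vanishes because $\Psi\,Q_{X}^{-\ell}$ is invariant along the closed geodesic. This is the only use of Stokes' theorem, and arranging this cancellation while correctly tracking the factors is the main obstacle.

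With the master formula available, identity \eqref{CycleInductionStart} follows by taking $\Psi=R_{-2k}^{k-\ell}F$ and rewriting $L_{-2\ell}\Psi=y^{2\ell+2}\,\overline{\xi_{-2\ell}\Psi}$. Substituting $Q_{X}(z)^{-\ell}=(4N|m|)^{-\ell}y^{-2\ell}Q_{X}(\bar z)^{\ell}$ on $c(X)$ and using $\overline{Q_{X}(z)}=Q_{X}(\bar z)$ and $d\bar z=\overline{dz}$ turns the integral into $(4N|m|)^{-\ell}\,\overline{\int_{c(X)}\xi_{-2\ell}\Psi\cdot Q_{X}(z)^{\ell}\,dz}$, which is precisely $(4N|m|)^{-\ell}\,\overline{\mathcal{C}(\xi_{-2\ell}R_{-2k}^{k-\ell}F,X)}$. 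Note that no Laplace eigenvalue is needed here.

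For the recursion \eqref{CycleInductionStepR} I would instead take $\Psi=R_{-2k}^{k-\ell}F=R_{-2\ell-2}\phi$ with $\phi=R_{-2k}^{k-\ell-1}F$ of weight $-2\ell-2$. The decisive input is Lemma~\ref{lm:reldiff}, which gives $\Delta_{-2\ell-2}\phi=-(k-\ell-1)(k+\ell+2)\phi$; combined with $-\Delta_{-2\ell-2}=L_{-2\ell}R_{-2\ell-2}+(-2\ell-2)$ from \eqref{eq:DkLkRk} this yields the clean eigen-relation $L_{-2\ell}\Psi=L_{-2\ell}R_{-2\ell-2}\phi=(k-\ell)(k+\ell+1)\phi$. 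Feeding this into the master formula and then using the geodesic identities $d\bar z=-dz$ and $y^{-2}Q_{X}(z)^{-\ell}=-4N|m|\,Q_{X}(z)^{-\ell-2}$ (both immediate on the standardized axis, the latter from $Q_{X}(z)^{2}=-4N|m|\,y^{2}$) converts $\int_{c(X)}y^{-2}\phi\,Q_{X}(z)^{-\ell}\,d\bar z$ into $4N|m|\int_{c(X)}\phi\,Q_{X}(z)^{-\ell-2}\,dz=4N|m|\,\mathcal{C}(\phi,X)$, giving the stated constant $4N|m|(k-\ell)(k+\ell+1)$. I expect the bookkeeping of the powers of $y$ and $Q_{X}$ and of the constant $4N|m|$—rather than any conceptual difficulty—to be where care is required.
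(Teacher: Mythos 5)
Your proposal is correct and follows essentially the same route as the paper's proof: a single integration by parts along the closed geodesic (with the boundary term killed by the invariance of $\Psi\,Q_X^{-\ell}$ under the generator of $\overline{\Gamma}_X$) yields \eqref{CycleInductionStart} via the relation $L_{-2\ell}\Psi=y^{2\ell+2}\overline{\xi_{-2\ell}\Psi}$, and \eqref{CycleInductionStepR} then follows from the eigenvalue identity $L_{-2\ell}R_{-2\ell-2}\phi=(k-\ell)(k+\ell+1)\phi$ coming from Lemma~\ref{lm:reldiff} and \eqref{eq:DkLkRk}. The only difference is cosmetic: you package the Stokes step as a coordinate-free ``master formula'' in terms of $Q_X(z)^{-\ell}$, whereas the paper performs the same computation directly in the standardized coordinates $g.iy$; all constants check out.
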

	
	\begin{proof}
		Plugging in the definition of the cycle integral, the left-hand side of (\ref{CycleInductionStart}) equals
		\[
		(-2\sqrt{|m|N}i)^{-\ell}i\int_{1}^{\varepsilon^{2}}(R_{-2k}^{k-\ell+1}F_{g})(iy)y^{-\ell}dy.
		\] 
		Since $\ell \leq k$ we can split off the outermost raising operator $R_{-2\ell} = 2i\frac{\partial}{\partial z} - 2\ell y^{-1}$ to obtain
		\[
		(R_{-2k}^{k-\ell+1}F_{g})(iy)y^{-\ell} = 2i\left(\frac{\partial}{\partial z}R_{-2k}^{k-\ell}F_{g}\right)(iy)y^{-\ell} - 2\ell(R_{-2k}^{k-\ell}F_{g})(iy)y^{-\ell-1}.
		\]
		Now we use $\frac{\partial}{\partial z} = \frac{\partial}{\partial \bar{z}} - i\frac{\partial}{\partial y}$ and apply the product rule to the $\frac{\partial}{\partial y}$-part to get
		\[
		(R_{-2k}^{k-\ell+1}F_{g})(iy)y^{-\ell} = 2i\left(\frac{\partial}{\partial \bar{z}}R_{-2k}^{k-\ell}F_{g}\right)(iy)y^{-\ell} + 2\frac{\partial}{\partial y}\left((R_{-2k}^{k-\ell}F_{g})(iy)y^{-\ell}\right).
		\]
		Note that we also used $(\frac{\partial}{\partial y}R_{-2k}^{k-\ell}F_{g})(iy) = \frac{\partial}{\partial y}((R_{-2k}^{k-\ell}F_{g})(iy))$.
		The first summand on the right-hand side equals
		\[
		2i\left(\frac{\partial}{\partial \bar{z}}R_{-2k}^{k-\ell}F_{g}\right)(iy)y^{-\ell} = -\overline{(\xi_{-2\ell}R_{-2k}^{k-\ell}F_{g})(iy)}y^{\ell},
		\]
		giving the right-hand side of (\ref{CycleInductionStart}). Further, the integral
		\[
		\int_{1}^{\varepsilon^{2}}\frac{\partial}{\partial y}\left((R_{-2k}^{k-\ell}F_{g})(iy)y^{-\ell}\right)dy =  (R_{-2k}^{k-\ell}F_{g})(i\varepsilon^{2})\varepsilon^{-2\ell}-(R_{-2k}^{k-\ell}F_{g})(i)
		\]
		vanishes since $(R_{-2k}^{k-\ell}F_{g})(i\varepsilon^{2})\varepsilon^{-2\ell} = (R_{-2k}^{k-\ell}F_{g})|_{-2\ell}\left( \begin{smallmatrix}\varepsilon & 0 \\ 0 & \varepsilon^{-1} \end{smallmatrix}\right)(i)$ and $R_{-2k}^{k-\ell}F_{g}$ transforms like a modular form of weight $-2\ell$ for $g^{-1}\Gamma g$. This completes the proof of (\ref{CycleInductionStart}).
		
		The formula (\ref{CycleInductionStepR}) easily follows from (\ref{CycleInductionStart}) if we use that 
		\[
		\overline{\xi_{-2\ell}R_{-2k}^{k-\ell}F} = (k-\ell)(k+\ell+1)y^{-2\ell-2}R_{-2k}^{k-\ell-1}F
		\]
		for all $k \in \Z$, all integers $\ell \leq k-1$ and $F \in H_{-2k}^{+}(\Gamma)$. This follows from Lemma \ref{lm:reldiff} if we write $\xi_{-2\ell} = y^{-2\ell-2}\overline{L_{-2\ell}}$ and use the relation (\ref{eq:DkLkRk}).
	\end{proof}
	
	\begin{corollary}\label{CycleIntegralMainTheorem}
		Let $X \in V$ with $Q(X) = m < 0$ such that $|m|/N$ is not a square. Further, let $k\in \Z_{\geq 0}$ and $F \in H_{-2k}^{+}(\Gamma)$. For $j \in \Z_{\geq 0}$ we have
		\begin{align*}
		\mathcal{C}(R_{-2k}^{2j+1}F,X) = \frac{1}{(4|m|N)^{k-j}}\frac{j!(k-j)!(2k)!}{k!(2k-2j)!}\overline{\mathcal{C}(\xi_{-2k}F,X)}.
		\end{align*}
%		\begin{align*}
%		\mathcal{C}(R_{-2k}^{2j+1}F,X) = \frac{j!}{(4|m|N)^{k-j}}\frac{\binom{2k}{2j}}{\binom{k}{j}}\overline{\mathcal{C}(\xi_{-2k}F,X)}.
%		\end{align*}
%		\begin{align*}
%		\mathcal{C}(R_{-2k}^{2j+1}F,X) = \frac{1}{(4|m|N)^{k-j}}\prod_{\ell = 1}^{j}(2\ell)(2k-2\ell+1)\overline{\mathcal{C}(\xi_{-2k}F,X)}.
%		\end{align*}
	\end{corollary}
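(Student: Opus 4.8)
The plan is to prove the closed formula by induction on $j$, taking equation \eqref{CycleInductionStart} of Proposition \ref{CycleIntegralInduction} as the base case and equation \eqref{CycleInductionStepR} as the inductive step, and then performing a purely combinatorial simplification of the resulting product. To fix notation I would write $c_j = \mathcal{C}(R_{-2k}^{2j+1}F,X)$ for $0 \leq j \leq k$, so that the claim is the statement $c_j = (4|m|N)^{-(k-j)}\,\tfrac{j!(k-j)!(2k)!}{k!(2k-2j)!}\,\overline{\mathcal{C}(\xi_{-2k}F,X)}$.

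For the base case $j = 0$ I would specialize \eqref{CycleInductionStart} to $\ell = k$, which gives
\[
c_0 = \mathcal{C}(R_{-2k}F,X) = \frac{1}{(4|m|N)^{k}}\overline{\mathcal{C}(\xi_{-2k}F,X)},
\]
and this agrees with the asserted formula at $j = 0$ since the factorial prefactor equals $1$ there. For the inductive step I would apply \eqref{CycleInductionStepR} with $\ell = k - 2j$; this is legitimate since then $\ell \leq k-1$ for every $j \geq 1$, and with this choice $k-\ell+1 = 2j+1$ and $k-\ell-1 = 2j-1$, so that $k-\ell = 2j$ and $k+\ell+1 = 2k-2j+1$. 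Hence \eqref{CycleInductionStepR} becomes the recursion
\[
c_j = 4|m|N\cdot 2j\,(2k-2j+1)\,c_{j-1}, \qquad j \geq 1.
\]
Unwinding this against the base case immediately yields
\[
c_j = c_0\,(4|m|N)^{j}\prod_{i=1}^{j} 2i\,(2k-2i+1).
\]

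The only genuinely computational point is to identify this product with the claimed factorial ratio, and this is where I would spend the most care, though no real obstacle is expected. I would note $\prod_{i=1}^{j} 2i = 2^{j} j!$ and recognize $\prod_{i=1}^{j}(2k-2i+1) = (2k-1)(2k-3)\cdots(2k-2j+1)$ as the product of the $j$ odd factors among the $2j$ consecutive integers making up $(2k)!/(2k-2j)!$. Splitting off the $j$ even factors, which are $2k, 2k-2, \dots, 2k-2j+2 = 2^{j}\,k!/(k-j)!$, gives
\[
\prod_{i=1}^{j}(2k-2i+1) = \frac{(2k)!\,(k-j)!}{(2k-2j)!\,2^{j}\,k!},
\]
so that $\prod_{i=1}^{j} 2i\,(2k-2i+1) = j!\,(2k)!\,(k-j)!\big/\big((2k-2j)!\,k!\big)$. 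Substituting this expression together with $c_0 = (4|m|N)^{-k}\overline{\mathcal{C}(\xi_{-2k}F,X)}$ collapses the powers of $4|m|N$ to $(4|m|N)^{-(k-j)}$ and produces exactly the stated identity, completing the induction.
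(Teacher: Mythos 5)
Your proposal is correct and follows exactly the paper's own (one-line) proof: the base case is \eqref{CycleInductionStart} with $\ell = k$, and the induction step is \eqref{CycleInductionStepR} with $\ell = k-2j$, followed by the factorial simplification of the resulting product, which you carry out correctly. The only difference is that you supply the combinatorial details the paper omits.
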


	\begin{proof}
	We use (\ref{CycleInductionStart}) with $\ell = k$ and then repeatedly apply (\ref{CycleInductionStepR}).
	\end{proof}
	
	As a we special case we obtain a generalization of Theorem 1.1. from \cite{briguka2}. 
	
	\begin{corollary}\label{CycleIntegralTheorem}
		Let $X \in V$ with $Q(X) = m < 0$ such that $|m|/N$ is not a square. Further, let $k\in \Z_{\geq 0}$ and $F \in H_{-2k}^{+}(\Gamma)$. For even $k$ we have
		\begin{align*}
		\mathcal{C}(R_{-2k}^{k+1}F,X) = \frac{1}{(4|m|N)^{k/2}}\frac{((\tfrac{k}{2})!)^{2}(2k)!}{(k!)^{2}}\overline{\mathcal{C}(\xi_{-2k}F,X)},
		\end{align*}
%		\begin{align*}
%		\mathcal{C}(R_{-2k}^{k+1}F,X) = \frac{1}{(4|m|N)^{k/2}}\prod_{\ell = 0}^{k/2-1}(k-2\ell)(k+2\ell+1)\overline{\mathcal{C}(\xi_{-2k}F,X)},
%		\end{align*}
		and for odd $k$ we have
		\begin{align*}
		\mathcal{C}(R_{-2k}^{k}F,X) = \frac{1}{(4|m|N)^{(k+1)/2}}\frac{(\tfrac{k-1}{2})!(\tfrac{k+1}{2})!(2k)!}{(k+1)!k!}\overline{\mathcal{C}(\xi_{-2k}F,X)}.
		\end{align*}
%		\begin{align*}
%		\mathcal{C}(R_{-2k}^{k}F,X) = \frac{1}{(4|m|N)^{(k+1)/2}}\prod_{\ell=1}^{(k-1)/2}(k-2\ell+1)(k+2\ell)\overline{\mathcal{C}(\xi_{-2k}F,X)}.
%		\end{align*}
	\end{corollary}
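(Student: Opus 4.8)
The plan is to derive Corollary~\ref{CycleIntegralTheorem} as a direct specialization of Corollary~\ref{CycleIntegralMainTheorem}, which is already available from the preceding proposition. The only work is to substitute the correct value of $j$ for each parity of $k$ and simplify the resulting ratio of factorials.

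First I would handle the even case. Here one wants the cycle integral of $R_{-2k}^{k+1}F$, so I set $2j+1 = k+1$, i.e.\ $j = k/2$, which is a non-negative integer precisely because $k$ is even. Plugging $j = k/2$ into the formula of Corollary~\ref{CycleIntegralMainTheorem} gives the prefactor
\[
\frac{1}{(4|m|N)^{k-k/2}}\frac{(k/2)!\,(k-k/2)!\,(2k)!}{k!\,(2k-2\cdot k/2)!}
= \frac{1}{(4|m|N)^{k/2}}\frac{(k/2)!\,(k/2)!\,(2k)!}{k!\,k!},
\]
using $k - k/2 = k/2$ and $2k - 2(k/2) = k$, so the $(2k-2j)! = k!$ in the denominator combines with the $k!$ already present. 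This is exactly $\frac{1}{(4|m|N)^{k/2}}\frac{((k/2)!)^{2}(2k)!}{(k!)^{2}}$, matching the claimed constant.

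For the odd case one wants $R_{-2k}^{k}F$, so I set $2j+1 = k$, i.e.\ $j = (k-1)/2$, which is a non-negative integer since $k$ is odd. Substituting this value of $j$ into Corollary~\ref{CycleIntegralMainTheorem}, I compute $k - j = k - (k-1)/2 = (k+1)/2$ for the power of $4|m|N$, and for the factorials $j! = (\tfrac{k-1}{2})!$, $(k-j)! = (\tfrac{k+1}{2})!$, and $2k - 2j = 2k - (k-1) = k+1$, giving $(2k-2j)! = (k+1)!$ in the denominator. This yields the constant $\frac{1}{(4|m|N)^{(k+1)/2}}\frac{(\tfrac{k-1}{2})!(\tfrac{k+1}{2})!(2k)!}{(k+1)!\,k!}$, as stated.

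There is essentially no obstacle here; the entire content of the corollary is the combinatorial bookkeeping of the two substitutions, and both cases reduce to elementary factorial identities. The one point worth double-checking is that the chosen $j$ stays in $\Z_{\geq 0}$ under the stated parity hypothesis—trivially true in both cases—and that the hypothesis $|m|/N$ not a square carries over unchanged, since it is inherited verbatim from Corollary~\ref{CycleIntegralMainTheorem}. I would therefore write the proof in one or two lines: \emph{Specialize Corollary~\ref{CycleIntegralMainTheorem} to $j = k/2$ when $k$ is even and to $j = (k-1)/2$ when $k$ is odd, and simplify.}
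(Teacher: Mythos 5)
Your proposal is correct and is exactly the route the paper intends: the corollary is stated as a special case of Corollary~\ref{CycleIntegralMainTheorem}, obtained by taking $j=k/2$ for even $k$ and $j=(k-1)/2$ for odd $k$, and your factorial simplifications in both cases check out. Nothing is missing.
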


	Moreover, we obtain the non-square part of Theorem 1.1 from \cite{briguka} which asserts that the cycle integrals of the weight $2k+2$ weakly holomorphic modular forms $\mathcal{D}^{2k+1}F = -(4\pi)^{-(2k+1)}R_{-2k}^{2k+1}F$ and $\xi_{-2k}F$ agree up to some constant.
	
	\begin{corollary}\label{CycleIntegralTheorem2}
		Let $X \in V$ with $Q(X) = m < 0$ such that $|m|/N$ is not a square.	For $k \in \Z_{\geq 0}$ and $F \in H_{-2k}^{+}(\Gamma)$ we have
		\[
		\mathcal{C}(\mathcal{D}^{2k+1}F,X) = -\frac{(2k)!}{(4\pi)^{2k+1}}\overline{\mathcal{C}(\xi_{-2k}F,X)}.
		\]
	\end{corollary}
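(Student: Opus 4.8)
The plan is to deduce Corollary \ref{CycleIntegralTheorem2} directly from Corollary \ref{CycleIntegralMainTheorem} by specializing the free parameter $j$ to the value $j = k$, since the operator $\mathcal{D}^{2k+1} = -(4\pi)^{-(2k+1)}R_{-2k}^{2k+1}$ is exactly the iterated raising operator $R_{-2k}^{2j+1}$ with $j = k$, up to the explicit constant $-(4\pi)^{-(2k+1)}$. First I would recall that Corollary \ref{CycleIntegralMainTheorem} gives, for every $j \in \Z_{\geq 0}$,
\[
\mathcal{C}(R_{-2k}^{2j+1}F,X) = \frac{1}{(4|m|N)^{k-j}}\frac{j!(k-j)!(2k)!}{k!(2k-2j)!}\overline{\mathcal{C}(\xi_{-2k}F,X)}.
\]
Setting $j = k$ makes the exponent $k - j = 0$, so the prefactor $(4|m|N)^{-(k-j)}$ becomes $1$, and the combinatorial factor becomes
\[
\frac{k!\,(k-k)!\,(2k)!}{k!\,(2k-2k)!} = \frac{k!\cdot 0!\cdot(2k)!}{k!\cdot 0!} = (2k)!,
\]
using $0! = 1$. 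Hence
\[
\mathcal{C}(R_{-2k}^{2k+1}F,X) = (2k)!\,\overline{\mathcal{C}(\xi_{-2k}F,X)}.
\]

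Next I would multiply both sides by the constant $-(4\pi)^{-(2k+1)}$ and use linearity of the cycle integral $\mathcal{C}(\cdot,X)$ in its first argument (which is immediate from its definition as an integral of the function against $Q_X^k(z)\,dz$) to pull the scalar inside:
\[
\mathcal{C}(\mathcal{D}^{2k+1}F,X) = -\frac{1}{(4\pi)^{2k+1}}\,\mathcal{C}(R_{-2k}^{2k+1}F,X) = -\frac{(2k)!}{(4\pi)^{2k+1}}\,\overline{\mathcal{C}(\xi_{-2k}F,X)},
\]
which is exactly the stated identity. The only points requiring a word of justification are that $\xi_{-2k}F$ is a genuine cusp form of weight $2k+2$ so that the cycle integral $\mathcal{C}(\xi_{-2k}F,X)$ on the right-hand side is well-defined and convergent (this is guaranteed since $X$ has non-square $|m|/N$, so $c(X)$ is a closed geodesic and the surjectivity of $\xi_{-2k}$ onto $S_{2k+2}(\Gamma)$ applies), and that the complex conjugation passes through the scalar multiplication correctly—here the constant $-(4\pi)^{-(2k+1)}$ is real, so it commutes with the conjugation without complication.

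I do not anticipate any genuine obstacle: this corollary is purely a matter of evaluating the general formula at $j = k$ and absorbing the normalization constant that defines $\mathcal{D}^{2k+1}$. The substantive mathematical work has already been carried out in Proposition \ref{CycleIntegralInduction} and Corollary \ref{CycleIntegralMainTheorem}, where the inductive argument via Stokes' theorem and the commutation relations of Lemma \ref{lm:reldiff} were established. If anything, the only mild care needed is bookkeeping: verifying that the empty factorials collapse as claimed and that the definition $\mathcal{D}^{2k+1}F = -(4\pi)^{-(2k+1)}R_{-2k}^{2k+1}F$ matches the exponent $2j+1 = 2k+1$ appearing in Corollary \ref{CycleIntegralMainTheorem} when $j=k$.
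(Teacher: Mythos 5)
Your proof is correct and is exactly the paper's intended derivation: the corollary is obtained by specializing Corollary \ref{CycleIntegralMainTheorem} to $j=k$, where the factor $(4|m|N)^{-(k-j)}$ becomes $1$ and the combinatorial coefficient collapses to $(2k)!$, and then absorbing the real normalization constant $-(4\pi)^{-(2k+1)}$ defining $\mathcal{D}^{2k+1}$ by linearity of the cycle integral. All the bookkeeping checks out.
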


	\subsection{Infinite geodesics}\label{InfiniteGeodesics}
	
	Let $X \in V$ with $Q(X) = m < 0$ such that $|m|/N$ is a square in $\Q$, i.e.~the stabilizer $\overline{\Gamma}_{X}$ is trivial and $c(X) = \Gamma_{X}\setminus c_{X}$ is an infinite geodesic in $\Gamma \setminus \h$. Recall that for a cusp form $G \in S_{2k+2}$ the cycle integral is defined by
	\[
	\mathcal{C}(G,X) = (-2\sqrt{|m|N}i)^{k}i\int_{0}^{\infty}G_{g}(iy)y^{k}dy,
	\]
	where $g \in \SL_{2}(\R)$ is such that $g^{-1}Xg = \sqrt{|m|/N}\left(\begin{smallmatrix} 1 & 0 \\ 0 & - 1 \end{smallmatrix} \right)$ and $G_{g} = G|_{2k+2}g$. 
	
	We would like to prove similar identities as in the last section, but in general the cycle integral of $R_{-2k}^{k-\ell}F$ does not converge if the geodesic is infinite. If we start with the (convergent) cycle integral of $\xi_{-2k}F$ and repeat the calculations of the last section, we are led to suitable regularized cycle integrals of $R_{-2k}^{k-\ell}F$.
	
	First, for $F \in H_{-2k}^{+}(\Gamma)$ we write
	\[
	\mathcal{C}(\xi_{-2k}F,X) = (-2\sqrt{|m|N}i)^{k}i\left(\int_{1}^{\infty}\xi_{-2k}F_{g}(iy)y^{k}dy + (-1)^{k+1}\int_{1}^{\infty}\xi_{-2k}F_{gS}(iy)y^{k}dy \right),
	\]
	with $S = \left(\begin{smallmatrix}0 & -1 \\ 1 & 0 \end{smallmatrix}\right)$, where we split the integral over $(0,\infty)$ at $1$ and replaced $y$ by $1/y$ in the integral over $(0,1)$. Now we decompose $F_{g} = F_{g}^{+} + F_{g}^{-}$ and $F_{gS} = F_{gS}^{+} + F_{gS}^{-}$ into their holomorphic and non-holomorphic parts and use $\xi_{-2k}F_{g} = \xi_{-2k}F_{g}^{-}$ and $\xi_{-2k}F_{gS} = \xi_{-2k}F_{gS}^{-}$. Note that $F_{g}^{-}$ and $F_{gS}^{-}$ are rapidly decreasing at the cusp $\infty$, but not necessarily at $0$, and this is the reason why we split the integral above. We have the following analog of Proposition \ref{CycleIntegralInduction}:
	
	\begin{proposition}
		Let $X \in V$ with $Q(X) = m < 0$ such that $|m|/N$ is a square. Let $k \in \Z$ and $F \in H_{-2k}^{+}(\Gamma)$. For all integers $\ell \leq k$ we have
	\begin{align*}
	\int_{1}^{\infty}R_{-2k}^{k-\ell+1}F_{g}^{-}(iy)y^{-\ell}dy = -\overline{\int_{1}^{\infty}\xi_{-2\ell}R_{-2k}^{k-\ell}F_{g}^{-}(iy)y^{\ell} dy} - 2R_{-2k}^{k-\ell}F_{g}^{-}(i).
	\end{align*}
		Further, for $\ell \leq k-1$ we have
		\begin{align*}
		\int_{1}^{\infty}R_{-2k}^{k-\ell+1}F_{g}^{-}(iy)y^{-\ell}dy = -(k-\ell)(k+\ell+1)\int_{1}^{\infty}R_{-2k}^{k-\ell-1}F_{g}^{-}(iy)y^{-\ell-2}dy-2R_{-2k}^{k-\ell}F_{g}^{-}(i).
		\end{align*}
	The same formulas hold with $g$ replaced by $gS$.
	\end{proposition}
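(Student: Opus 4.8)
The plan is to mimic the proof of Proposition~\ref{CycleIntegralInduction} almost verbatim, the only genuine difference being the treatment of the boundary terms: on a closed geodesic they cancel by the modular transformation under the generator of $\overline{\Gamma}_X$, whereas on the infinite geodesic the stabilizer is trivial, so one boundary term survives and produces the correction $-2R_{-2k}^{k-\ell}F_g^-(i)$. First I would record the pointwise differential identity underlying both statements. Splitting off the outermost raising operator $R_{-2\ell} = 2i\frac{\partial}{\partial z} - 2\ell y^{-1}$ and then rewriting $\frac{\partial}{\partial z} = \frac{\partial}{\partial \bar z} - i\frac{\partial}{\partial y}$ exactly as in Proposition~\ref{CycleIntegralInduction}, one obtains for every integer $\ell \le k$ the identity
\begin{align*}
(R_{-2k}^{k-\ell+1}F_g^-)(iy)\,y^{-\ell} = -\overline{(\xi_{-2\ell}R_{-2k}^{k-\ell}F_g^-)(iy)}\,y^{\ell} + 2\frac{\partial}{\partial y}\Big((R_{-2k}^{k-\ell}F_g^-)(iy)\,y^{-\ell}\Big).
\end{align*}
Since this manipulation uses no modular invariance, it applies verbatim with $F_g$ replaced by its non-holomorphic part $F_g^-$.

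Next I would integrate this identity over $y \in (1,\infty)$. The first term on the right reproduces $-\overline{\int_1^\infty \xi_{-2\ell}R_{-2k}^{k-\ell}F_g^-(iy)y^\ell\,dy}$, while the second is a total derivative, contributing the boundary values $2\big[(R_{-2k}^{k-\ell}F_g^-)(iy)y^{-\ell}\big]_{y=1}^{y=\infty}$. Here I would use that $F_g^-$ is rapidly (in fact exponentially) decreasing at the cusp $\infty$ --- this is precisely why the integral defining $\mathcal{C}(\xi_{-2k}F,X)$ was split at $1$ --- and that the raising operators, involving only $\partial/\partial z$ and multiplication by $y^{-1}$, preserve this decay, so that $(R_{-2k}^{k-\ell}F_g^-)(iy)\,y^{-\ell} \to 0$ as $y\to\infty$. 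The upper boundary value therefore vanishes, while the lower one yields $-2(R_{-2k}^{k-\ell}F_g^-)(i)$, and this is exactly the first asserted formula.

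For the second formula I would substitute into the first the identity
\begin{align*}
\overline{\xi_{-2\ell}R_{-2k}^{k-\ell}F_g^-} = (k-\ell)(k+\ell+1)\,y^{-2\ell-2}\,R_{-2k}^{k-\ell-1}F_g^-,
\end{align*}
which was established in the proof of Proposition~\ref{CycleIntegralInduction} from Lemma~\ref{lm:reldiff} and \eqref{eq:DkLkRk}, and which remains valid because $F_g^-$ is again harmonic of weight $-2k$: indeed $F_g$ is harmonic (the slash action of $g\in\SL_2(\R)$ commutes with $\Delta_{-2k}$) and its holomorphic part $F_g^+$ is annihilated by $\Delta_{-2k}$, so $\Delta_{-2k}F_g^- = 0$. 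Pulling the real factor $y^\ell$ inside the integral and combining the powers of $y$ then turns the first formula into the second. The case of $gS$ is identical, since $F_{gS}$ is likewise harmonic with non-holomorphic part $F_{gS}^-$ rapidly decreasing at $\infty$.

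The main obstacle is the careful bookkeeping of the boundary contribution at $y=1$: unlike in the closed-geodesic case it does not cancel, and one must verify both that it survives and that the companion boundary term at $\infty$ truly vanishes, i.e.\ that iterated raising genuinely preserves the exponential decay of $F_g^-$ at the cusp. Everything else is the same routine differential computation as in Proposition~\ref{CycleIntegralInduction}.
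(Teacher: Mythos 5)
Your proposal is correct and follows essentially the same route as the paper, which simply observes that the computations of Proposition \ref{CycleIntegralInduction} carry over with $\varepsilon^{2}$ replaced by $\infty$, the rapid decay of $R_{-2k}^{k-\ell}F_{g}^{-}(iy)$ killing the boundary term at $\infty$ while the surviving term at $y=1$ produces $-2R_{-2k}^{k-\ell}F_{g}^{-}(i)$. Your additional remarks --- that the pointwise differential identity uses no modular invariance and that the eigenvalue identity for $\xi_{-2\ell}R_{-2k}^{k-\ell}$ still applies to $F_{g}^{-}$ because $\Delta_{-2k}F_{g}^{-}=0$ --- correctly supply the details the paper leaves implicit.
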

	
	\begin{proof}
		The computations are the same as in the proof of Proposition \ref{CycleIntegralInduction} if we replace $\varepsilon^{2}$ by $\infty$ and use the rapid decay of $R_{-2k}^{k-\ell}F^{-}(iy)$ as $y \to \infty$.
	\end{proof}
	
	A repeated application of the proposition leads to following definition: For every integer $j \geq 0$ we define the regularized cycle integral of $R_{-2k}^{2j+1}F$ by
	\begin{align*}
	&\mathcal{C}^{\reg}(R_{-2k}^{2j+1}F,X)=(-2\sqrt{|m|N}i)^{-k+2j}i \\
	&\quad\quad\quad\times\bigg(\sum_{\ell=0}^{j}C_{\ell,j}(R_{-2k}^{2\ell}F^{-}_{g}(i)) + (-1)^{k+1}\sum_{\ell=0}^{j}C_{\ell,j}(R_{-2k}^{2\ell}F^{-}_{gS}(i)) \\ 
	&\quad \qquad \qquad + \int_{1}^{\infty}R_{-2k}^{2j+1}F_{g}^{-}(iy)y^{-k+2j}dy+(-1)^{k+1}\int_{1}^{\infty}R_{-2k}^{2j+1}F_{gS}^{-}(iy)y^{-k+2j}dy\bigg),
	\end{align*}
	where $C_{\ell,j} = 2(-1)^{\ell+j}\prod_{t=\ell+1}^{j}(2t)(2k-2t+1)$. Note that
	\[
	R_{-2k}^{2\ell}F^{-}_{g}(i) + (-1)^{k+1}R_{-2k}^{2\ell}F^{-}_{gS}(i) = -R_{-2k}^{2\ell}F^{+}_{g}(i) - (-1)^{k+1}R_{-2k}^{2\ell}F^{+}_{gS}(i),
	\]
	so the second line above can also be understood as the part of the regularized cycle integral coming from $F^{+}$.
	
	With this definition, we find
	\begin{align*}
	\mathcal{C}^{\reg}(R_{-2k}F,X) = \frac{1}{(4|m|N)^{k}}\overline{\mathcal{C}(\xi_{-2k}F,X)}
	\end{align*}
	and
%	\begin{align*}
%	\mathcal{C}^{\reg}(R_{-2k}^{k-\ell+1}F,X) = 4|m|N(k-\ell)(k+\ell+1)\mathcal{C}^{\reg}(R_{-2k}^{k-\ell-1}F,X)
%	\end{align*}
	\begin{align*}
	\mathcal{C}^{\reg}(R_{-2k}^{2j+1}F,X) = 4|m|N(2j)(2k-2j+1)\mathcal{C}^{\reg}(R_{-2k}^{2j-1}F,X)
	\end{align*}
	for $j \geq 1$, and thus all the corollaries of the last section also hold for $|m|/N$ being a square. 
	
	\begin{remark}
	For $k = j = 0$ and $F \in H_{0}^{+}(\Gamma)$ the regularized cycle integral of  $R_{0}F$ is defined by
	\begin{align*}
	\mathcal{C}^{\reg}(R_{0}F,X) &= 2iF^{-}_{g}(i) - 2iF^{-}_{gS}(i) + i\int_{1}^{\infty}R_{0}F^{-}_{g}(iy)dy - \int_{1}^{\infty}R_{0}F_{gS}(iy)dy.
	\end{align*}
	On the other hand, since $R_{0}F \in S_{2}^{!}(\Gamma)$ is in fact a weakly holomorphic cusp form, there is a regularized cycle integral studied in \cite{brifrike},\cite{briguka2} and \cite{briguka}. It is given by
	\[
	\mathcal{C}_{\text{BFK}}^{\reg}(R_{0}F,X) = \left[ i\int_{1}^{\infty}R_{0}F_{g}(iy)e^{-ys}dy\right]\bigg|_{s = 0} - \left[ i\int_{1}^{\infty}R_{0}F_{gS}(iy)e^{-ys}dy\right]\bigg|_{s = 0},
	\]
	where the expression on the right means that one has to take the value at $s = 0$ of the analytic continuation of the integral. We want to compare the two regularizations. Let us split $F_{g} = F_{g}^{+} + F^{-}_{g}$. Due to the rapid decay of $F_{g}^{-}$, we can plug in $s = 0$ in the integral over $F_{g}^{-}$. In the integral over $F_{g}^{+}$, we insert the Fourier expansion $F_{g}^{+}(z) = \sum_{n \in \Q}a_{g}^{+}(n)e^{2\pi i nz}$, apply $R_{0} = 2i\frac{\partial}{\partial z}$ and obtain after a short calculation
	\begin{align*}
	\bigg[i\int_{1}^{\infty}R_{0}F_{g}^{+}(iy)e^{-ys}dy\bigg]\bigg|_{s = 0} = \bigg[-4\pi i\sum_{n \neq 0}\frac{na^{+}_{g}(n)}{2\pi n +s}e^{-(2\pi n+s)}\bigg]\bigg|_{s = 0} = -2iF_{g}^{+}(i)+2ia^{+}_{g}(0).
	\end{align*}
	Using $F_{g}^{+}(i)-F_{gS}^{+}(i) = -F_{g}^{-}(i)+F_{gS}^{-}(i)$ we find
	\[
	\mathcal{C}^{\reg}(R_{0}F,X)  = \mathcal{C}_{\text{BFK}}^{\reg}(R_{0}F,X)  -2ia^{+}_{g}(0) + 2ia^{+}_{gS}(0).
	\]
	Note that the regularized cycle integrals considered in \cite{brifrike} are only studied for weakly holomorphic cusp forms, and the analytic continuation of the integrals relies on the particular shape of the Fourier expansion of such forms.
	For general $k$ and $j$, the function $R_{-2k}^{2j+1}F$ is not weakly holomorphic and has a somewhat complicated Fourier expansion, so it is not clear that the regularization of \cite{brifrike} works. It would be interesting to investigate this problem in the future.
	
	Finally, we remark that our regularized cycle integrals look very similar to the cycle integrals of weight zero harmonic weak Maass forms given in \cite{bif}. However, the definitions do not overlap since we only consider cycle integrals of $R_{-2k}^{\ell}F$ for odd $\ell$. Again, it would be nice to unify the approaches and define regularized cycle integrals of $R_{-2k}^{\ell}F$ for all $\ell \geq 0$.
	\end{remark}
	
%	\begin{Corollary}
%		Let $X \in V(\Q)$ with $q(X) = m < 0$ such that $|m|/N$ is a square. Further, let $k\in \Z_{\geq 0}$ and $F \in H_{-2k}^{+}(\Gamma)$. For $j \in \Z_{\geq 0}$ we have
%		\begin{align*}
%		\mathcal{C}^{\reg}(R_{-2k}^{2j+1}F,X) = \frac{1}{(4|m|N)^{k-j}}\prod_{\ell = 1}^{j}(2\ell)(2k-2\ell+1)\overline{\mathcal{C}(\xi_{-2k}F,X)}.
%		\end{align*}
%	\end{Corollary}

%%	
%%	Note that
%%	\[
%%	R_{-2k}^{k-\ell-2}F_{gS}(i) = (-1)^{k+1}R_{-2k}^{k-\ell-2}F_{g}(i)
%%	\]
%%	and thus
%%	\begin{align*}
%%	R_{-2k}^{k-\ell-2}F^{-}_{g}(i) + (-1)^{k+1}\overline{R_{-2k}^{k-\ell-2}F^{-}_{gS}(i)} = R_{-2k}^{k-\ell-2}F^{+}_{g}(i) + (-1)^{k+1}\overline{R_{-2k}^{k-\ell-2}F^{+}_{gS}(i)}
%%	\end{align*}
%	
%	\begin{Theorem}
%		Let $X \in V(\Q)$ with $q(X) = m < 0$ such that $|m|/N$ is a square. Further, let $k\in \Z_{\geq 0}$ and $F \in H_{-2k}^{+}(\Gamma)$. For even $k$ we have
%		\begin{align*}
%		\mathcal{C}^{\reg}(R_{-2k}^{k+1}F,X) = \frac{1}{(4|m|N)^{k/2}}\prod_{\ell = 0}^{k/2-1}(k-2\ell)(k+2\ell+1)\overline{\mathcal{C}(\xi_{-2k}F,X)},
%		\end{align*}
%		and for odd $k$ we have
%		\begin{align*}
%		\mathcal{C}^{\reg}(R_{-2k}^{k}F,X) = \frac{1}{(4|m|N)^{(k+1)/2}}\prod_{\ell=1}^{(k-1)/2}(k-2\ell+1)(k+2\ell)\overline{\mathcal{C}(\xi_{-2k}F,X)}.
%		\end{align*}
%	\end{Theorem}

\bibliographystyle{alpha}
\bibliography{bib.bib}

\end{document}